\theoremstyle{plain}
\newtheorem{theorem}{Theorem}[section]
\newtheorem{proposition}[theorem]{Proposition}
\newtheorem{lemma}[theorem]{Lemma}
\theoremstyle{definition}
\newtheorem{definition}[theorem]{Definition}
\newtheorem{remark}[theorem]{Remark}
\def\G{{\bf G}}
\def\E{{\bf E}}      
\def\F{{\bf F}}
\def\A{\mathbf{A}}
\def\B{\mathbf{B}}
\def\C{\mathbf{C}}
\def\D{\mathbf{D}}
\def\E{\mathbf{E}}
\def\F{\mathbf{F}}
\def\G{\mathbf{G}}
\def\F{\mathbf{F}}
\def\C{\mathbf{C}}
\def\G{\mathbf{G}}
\def\bdf{\begin{defn}}
\def\edf{\end{defn}}
\begin{document}

\title{The sections of the Weyl group}

\author{Moshe Adrian}
\address{Moshe Adrian:  Department of Mathematics, Queens College, CUNY, Queens, NY 11367-1597}
\email{moshe.adrian@qc.cuny.edu}

\maketitle

\begin{abstract}
We compute all sections of the Weyl group, that satisfy the braid relations, in the case that $G$ is an almost-simple connected reductive group defined over an algebraically closed field.  We then demonstrate that this set of sections gives rise to an interesting partially ordered structure, and also give some applications.
\end{abstract}

\section{Introduction}\label{intro}
Let $G$ be a connected reductive algebraic group over an algebraically closed field $F$.  Let $T$ be a maximal torus in $G$, and let $W = N / T$ denote the associated Weyl group, where $N$ denotes the normalizer of $T$ in $G$.  Let $\Delta$ be a set of simple roots of $T$ in $G$, and if $\alpha \in \Delta$, let $s_{\alpha} \in W$ be the simple reflection associated to $\alpha$.

In \cite{Tit66}, Tits canonically defined a section of the map $\pi : N \rightarrow W$, which satisfies the braid relations.  This section, which we denote $\mathcal{N}_{\circ}$, has been used extensively (for recent examples, see \cite{Lus18, AH17, Ros16}), and we write $\sigma_\alpha = \mathcal{N}_{\circ}(s_\alpha)$.  In this work, we determine all sections of $\pi : N \rightarrow W$, that satisfy the braid relations, in the case that $G$ is almost-simple (we also sometimes refer to such a section as a ``a section of $W$'').  To be more precise, let $\mathcal{S} : W \rightarrow N$ be a section (for the rest of the paper, this means both that $\pi \circ \mathcal{S} = 1$ and that $\mathcal{S}$ satisfies the braid relations).  Equivalently, $\mathcal{S}$ satisfies the braid relations when restricted to a set of simple reflections.  If $\alpha \in \Delta$, then $\mathcal{S}(s_{\alpha}) = t_{\alpha} \sigma_\alpha$ for some $t_{\alpha} \in T$.  The braid relations then impose a set of conditions on the collection $\{ t_{\alpha} \}_{\alpha \in \Delta}$.  We compute these conditions explicitly for each type, thereby determining all sections of $\pi$.  

For example, let $G$ be of type $\G_2$, and fix the simple system given by $\alpha = e_1 - e_2, \beta  = -2e_1 + e_2 + e_3$.  Since $\G_2$ is simply connected, the cocharacter lattice is spanned by the set of coroots, so we may write $t_\alpha = \alpha^{\vee}(a) \beta^{\vee}(b), t_\beta = \alpha^{\vee}(c) \beta^{\vee}(d)$ for some $a, b, c, d \in F$.  The braid relation for the pair $t_\alpha \sigma_\alpha, t_\beta \sigma_\alpha$ simplifies to
$$t_{\alpha} s_{\alpha}(t_{\beta}) s_{\alpha} s_{\beta}(t_{\alpha}) s_{\alpha} s_{\beta} s_{\alpha}(t_\beta) s_{\alpha} s_{\beta} s_\alpha s_\beta(t_\alpha) s_\alpha s_\beta s_\alpha s_\beta s_\alpha(t_\beta) $$ $$= t_\beta s_\beta(t_\alpha) s_\beta s_\alpha(t_\beta) s_\beta s_\alpha s_\beta(t_\alpha) s_\beta s_\alpha s_\beta s_\alpha (t_\beta) s_\beta s_\alpha s_\beta s_\alpha s_\beta (t_\alpha).$$
One may compute that the above equality is equivalent to $b^2 = c^2 = 1$.  Therefore, the sections of $W$ for $\G_2$ are those whose evaluation on $s_\alpha, s_\beta$ are given by $\mathcal{S}(s_\alpha) = t_\alpha \sigma_\alpha, \mathcal{S}(s_\beta) = t_\beta \sigma_\beta$, where $t_\alpha = \alpha^{\vee}(a) \beta^{\vee}(b), t_\beta = \alpha^{\vee}(c) \beta^{\vee}(d)$, and $b^2 = c^2 = 1$.

We then interpret the set of sections in the following way.  Let $\mathcal{S}$ be a section.  If $\alpha \in \Delta$, we denote the order of $\mathcal{S}(s_{\alpha})$ by $\mathrm{o}(\mathcal{S}(s_{\alpha}))$.  We associate to $\mathcal{S}$ a labeled Dynkin diagram in the following way: given a simple root $\alpha$, we label its associated node in the Dynkin diagram by $\mathrm{o}(\mathcal{S}(s_\alpha))$.  We call such a labeling an \emph{order profile}.  We may then define a partial order on the set of order profiles as follows.  First, we can obtain a strict partial order on the set of all sections by saying that if $\mathcal{S}, \mathcal{S}'$ are two sections (for $G$), then we write that $\mathcal{S}' < \mathcal{S}$ if $\mathrm{o}(\mathcal{S}'(s_{\alpha})) > \mathrm{o}(\mathcal{S}(s_{\alpha}))$ for all $\alpha \in \Delta$.  In particular, $\mathcal{S}$ is ``more homomorphic'' than $\mathcal{S}'$.  This translates to a partial ordering on the set of order profiles.  We then show that for each $G$, there is a unique maximal order profile with respect to this partial ordering.  We call any section having this maximal order profile an \emph{optimal section}, and we say that such a section is a ``most homomorphic'' section.  

We also compute the set of $T$-conjugacy classes of sections for each type.  First note that $T$ acts naturally by conjugation on the set of sections; moreover, this action preserves order profiles.  To say that two sections $\mathcal{S}, \mathcal{S}'$ are $T$-conjugate is to say that there is a $t \in T$ such that $t \mathcal{S}(w) t^{-1} = \mathcal{S}'(w)$ for all $w \in W$.
A summary of some of the main results is the following.  For some types, the set of order profiles coincides with the set of $T$-conjugacy classes of sections; this happens in the cases of adjoint type $\C_n$, adjoint type $\D_n$, $\F_4$, $\G_2$, $\E_8$, adjoint type $\E_7$, and adjoint type $\E_6$.  Every other type has at least one order profile containing more than one $T$-conjugacy class of sections.

For an example of the above, we again consider $G$ of type $\G_2$, and let $\mathcal{S}$ be a section of $W$.  Then $\mathcal{S}(s_\alpha) = t_\alpha \sigma_\alpha, \mathcal{S}(s_\beta) = t_\beta \sigma_\beta$ have either order $2$ or $4$.  Moreover, one can compute that $t_\alpha \sigma_\alpha$ has order $2$ if and only if $b = -1$, and $t_\beta \sigma_\beta$ has order $2$ if and only if $c = -1$.  The Hasse diagram of the ensuing partially ordered set of order profiles then has four vertices and is in the shape of a diamond, with the unique maximal element being order $2$ on the lifts of $s_\alpha$ and $s_\beta$ (hence in particular a homomorphism $W \rightarrow N$).  To give an example regarding the labeling of the Dynkin diagram, the section which is order $2$ on $t_\alpha s_\alpha$ and order $4$ on $t_\beta s_\beta$ would be represented by the labeled Dynkin diagram that has the number $2$ on the $\alpha$ node and the number $4$ on the $\beta$ node.  For the $T$-conjugacy classes of these sections, we first note that if $\mathcal{S}(s_{\alpha}) = t_{\alpha} \sigma_{\alpha}$, and if $t \in T$, then $t t_{\alpha} \sigma_{\alpha} t^{-1} = t_{\alpha} t s_{\alpha}(t^{-1}) \sigma_\alpha$.  One can compute, then, that two sections $\mathcal{S}, \mathcal{S}'$ for $\G_2$ are conjugate by $T$ if and only if they have the same associated order profile.  

Finally, we present some applications of our results.  Let $H$ be a subgroup of $W$.  Let us say that $H$ \emph{lifts} if there is a homomorphism $\iota : H \hookrightarrow N$ that is compatible with $\pi : N \rightarrow W$; that is, such that $\pi \circ \iota = 1$.  Various papers have studied the question of whether $W$ lifts; see, for example \cite{AH17, CWW74}.  This question is equivalent to asking whether there is a section of $\pi$ such that the lifts of all simple reflections have order two.  Our computations (and in particular, a quick glance at \S\ref{summary}) show exactly which groups satisfy this condition.

On the other hand, it might be interesting to know whether other subgroups of $W$ lift. For example, in \cite{Adr18}, in order to determine whether the Kottwitz homomorphism for $p$-adic groups splits (i.e. has a homomorphic section), it was necessary to know whether a certain subgroup of $W$, which we call $\mathcal{J}$ (which is isomorphic to the fundamental group of $G$), lifts.  We attempted to show that $\mathcal{N}_{\circ}$ exhibits a lifting of $\mathcal{J}$, but this turned out to not be true in a small number of cases.  We were still nonetheless able to prove that $\mathcal{J}$ still lifts in those cases, in a roundabout way.  The current paper grew out of an attempt to show that there is a more natural lifting of $\mathcal{J}$ than the one we presented in \cite{Adr18}, thereby providing a more natural splitting of the Kottwitz homomorphism (see Theorem \ref{kottwitz}).  In particular, in \S\ref{application}, we prove that if $\mathcal{S}$ is an optimal section, then $\mathcal{S}$ exhibits a lifting of $\mathcal{J}$.  

The paper is organized as follows.  In \S\ref{prelim}, we recall the basic notions about sections of the Weyl group, as well as Tits' section.  In \S\ref{sections}, we explicitly compute and describe all sections of the Weyl group in types $\textbf{A}$ through $\textbf{G}$ (except in low rank, which we provide in a later section), as well as the set of $T$-conjugacy classes of sections.  In \S\ref{application}, we give our application to the Kottwitz homomorphism, and the lifting of $\mathcal{J}$.  Finally, in \S\ref{summary}, we present a table which shows all order profiles for each type.  In \S\ref{lowrank}, we present the results for low rank groups.

\subsection{Acknowledgements}
We would like to thank an anonymous referee of \cite{Adr18} for asking whether the Tits section could be modified in order to generalize the main result of \cite{Adr18}.  It was this question that eventually led to the current paper.  We also would like to thank Jeffrey Adams for numerous helpful discussions on the contents of this work.

\section{Preliminaries}\label{prelim}
We first remind the reader of the definition of Tits' section from \cite{Tit66}, as well as some generalities about general sections.  We follow \cite[\S8.1, \S9.3]{Spr98} closely.  Let $G$ be a connected reductive group over an algebraically closed field $F$, let $T$ be a maximal torus in $G$, and let $\Phi$ be the associated set of roots.  For each $\alpha \in \Phi$, let $s_{\alpha}$ denote the associated reflection in the Weyl group $W = N / T$.

\begin{proposition}\cite[Proposition 8.1.1]{Spr98}\label{properties}
\begin{enumerate}
\item For $\alpha \in \Phi$ there exists an isomorphism $u_{\alpha}$ of $\mathbf{G}_a$ onto a unique closed subgroup $U_{\alpha}$ of $G$ such that $t u_{\alpha}(x) t^{-1} = u_{\alpha}(\alpha(t) x) \ (t \in T, x \in F)$.
\item $T$ and the $U_{\alpha} \ (\alpha \in \Phi)$ generate $G$.
\end{enumerate}
\end{proposition}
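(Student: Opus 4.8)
The plan is to realize the roots and their root subgroups through the adjoint action of $T$ on the Lie algebra $\mathfrak{g} = \mathrm{Lie}(G)$, and then to reduce the construction of each $U_\alpha$ to the structure theory of connected reductive groups of semisimple rank one. First I would diagonalize the action of $\mathrm{Ad}(T)$ on $\mathfrak{g}$: since $T$ is diagonalizable, $\mathfrak{g}$ splits as a direct sum of weight spaces $\mathfrak{g} = \mathfrak{g}_0 \oplus \bigoplus_{\chi} \mathfrak{g}_\chi$ indexed by characters $\chi \in X^*(T)$. Using the fact that $Z_G(T) = T$ (a standard consequence of the reductivity of $G$ and the maximality of $T$), one identifies $\mathfrak{g}_0 = \mathfrak{t} = \mathrm{Lie}(T)$ and defines $\Phi$ to be the set of nonzero weights, recovering the set of roots occurring in the statement.

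Next I would fix a root $\alpha$ and reduce to semisimple rank one. Let $S_\alpha = (\ker\alpha)^\circ$ be the codimension-one subtorus on which $\alpha$ is trivial, and let $Z_\alpha = Z_G(S_\alpha)$ be its centralizer. Invoking the theorem that the centralizer of a torus in a connected group is connected and reductive, $Z_\alpha$ is a connected reductive group containing $T$ as a maximal torus, whose only roots are those proportional to $\alpha$; hence $Z_\alpha$ has semisimple rank one. In this reduced setting I would appeal to the classification of connected reductive groups of semisimple rank one: modulo its central torus, $Z_\alpha$ is isomorphic to $\SL_2$ or $\PGL_2$, and inside such a group the unipotent root subgroup is explicitly the group of strictly upper-triangular unipotent elements. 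Transporting this back produces a closed subgroup $U_\alpha \cong \mathbf{G}_a$ normalized by $T$, with $\mathrm{Lie}(U_\alpha) = \mathfrak{g}_\alpha$; the conjugation formula $t u_\alpha(x) t^{-1} = u_\alpha(\alpha(t)x)$ is then read off from the explicit action of the diagonal torus on the unipotent elements. Uniqueness follows because any connected one-dimensional $T$-stable unipotent subgroup on which $T$ acts through $\alpha$ must have Lie algebra $\mathfrak{g}_\alpha$, and in the rank-one group these subgroups are exhaustively determined.

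For the second assertion, let $G'$ be the closed connected subgroup generated by $T$ together with all the $U_\alpha$. Since $\mathrm{Lie}(G')$ contains $\mathfrak{t}$ and every root space $\mathfrak{g}_\alpha$, it equals all of $\mathfrak{g}$; as $G'$ is a closed connected subgroup of the smooth connected group $G$ with $\mathrm{Lie}(G') = \mathrm{Lie}(G)$, a dimension count forces $G' = G$. Alternatively, I would assemble the positive root groups into a maximal unipotent subgroup $U$ with $B = TU$ a Borel subgroup, and then use the Bruhat decomposition $G = \bigcup_w BwB$ together with the fact that each simple reflection admits a representative in the rank-one subgroup $\langle U_\alpha, U_{-\alpha}\rangle$ to conclude that the $BwB$ exhaust $G$.

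I expect the genuine obstacle to be the semisimple rank-one analysis: proving that each weight space $\mathfrak{g}_\alpha$ is one-dimensional and pinning down $Z_\alpha$ up to isogeny as $\SL_2$ or $\PGL_2$. This is where the characteristic-$p$ subtleties (inseparability, and the possible non-smoothness of scheme-theoretic stabilizers) must be handled carefully, and it is the technical heart on which both the construction and the uniqueness of $U_\alpha$ rest. The generation statement, by contrast, is comparatively formal once the root groups are in hand.
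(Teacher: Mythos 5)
The paper gives no proof of this proposition: it is imported verbatim as background from \cite[Proposition 8.1.1]{Spr98}, and your sketch reproduces exactly the argument Springer uses there --- weight-space decomposition of $\mathfrak{g}$ under $\mathrm{Ad}(T)$, reduction to semisimple rank one via $Z_G\bigl((\ker\alpha)^{\circ}\bigr)$, the $\SL_2$/$\PGL_2$ classification to construct and characterize $U_\alpha$, and a Lie-algebra dimension count (or Bruhat decomposition) for generation. Your proposal is correct and takes essentially the same route as the cited source, including correctly identifying the rank-one analysis and its characteristic-$p$ subtleties as the technical heart.
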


Tits then defines a representative $\sigma_{\alpha}$, of $s_{\alpha}$, in $N$:

\begin{lemma}\cite[Lemma 8.1.4]{Spr98}\label{realization}
\begin{enumerate}
\item The $u_{\alpha}$ may be chosen such that for all $\alpha \in \Phi$, 
\[
\sigma_{\alpha} = u_{\alpha}(1) u_{-\alpha}(-1) u_{\alpha}(1)
\]
lies in $N$ and has image $s_{\alpha}$ in $W$.  For $x \in F^{\times}$, we have
\[
u_{\alpha}(x) u_{-\alpha}(-x^{-1}) u_{\alpha}(x) = \alpha^{\vee}(x) \sigma_{\alpha};
\]
\item $\sigma_{\alpha}^2 = \alpha^{\vee}(-1)$ and $\sigma_{-\alpha} = \sigma_{\alpha}^{-1}$;
\item If $u \in U_{\alpha} - \{1 \}$ there is a unique $u' \in U_{-\alpha} - \{1 \}$ such that $u u' u \in N$;
\item If $(u_{\alpha}')_{\alpha \in \Phi}$ is a second family with the property (1) of Proposition \ref{properties} and property (1) of Lemma \ref{realization}, there exist $c_{\alpha} \in F^{\times}$ such that
\[
u_{\alpha}'(x) = u_{\alpha}(c_{\alpha} x), \ c_{\alpha} c_{-\alpha} = 1 \ (\alpha \in \Phi, x \in F).
\]
\end{enumerate}
\end{lemma}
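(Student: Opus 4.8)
The plan is to reduce every assertion to an explicit matrix computation inside the rank-one subgroup attached to $\alpha$. For each $\alpha \in \Phi$ I would set $G_\alpha = \langle T, U_\alpha, U_{-\alpha}\rangle$, a connected reductive group of semisimple rank one, and invoke the structure theory of such groups to produce a homomorphism $p_\alpha : \SL_2 \to G$ with image in $G_\alpha$ satisfying
\[
p_\alpha\mat{1}{x}{0}{1} = u_\alpha(x), \quad p_\alpha\mat{1}{0}{x}{1} = u_{-\alpha}(x), \quad p_\alpha\mat{t}{0}{0}{t^{-1}} = \alpha^\vee(t).
\]
This map is the composite of the simply connected cover $\SL_2 \to G_\alpha^{\mathrm{der}}$ with the inclusion $G_\alpha^{\mathrm{der}} \hookrightarrow G$, the $T$-equivariance of Proposition \ref{properties}(1) identifying the upper/lower unipotent parameters with $u_{\pm\alpha}$. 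Using one such $p_\alpha$ per unordered pair $\{\alpha,-\alpha\}$, and parametrizing $u_{-\alpha}$ through the lower-triangular unipotents of the \emph{same} copy of $\SL_2$, builds the compatibility between the choices for $\alpha$ and $-\alpha$ directly into the construction.

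Granting $p_\alpha$, parts (1) and (2) become matrix identities transported from $\SL_2$. First I would verify
\[
\mat{1}{1}{0}{1}\mat{1}{0}{-1}{1}\mat{1}{1}{0}{1} = \mat{0}{1}{-1}{0},
\]
so that $\sigma_\alpha = p_\alpha\mat{0}{1}{-1}{0}$ normalizes $T$ and maps to the nontrivial Weyl element, i.e.\ to $s_\alpha$ in $W$; this realizes the choice of the $u_\alpha$, the only residual freedom (a scalar rescaling of $u_{-\alpha}$) being pinned down by the existence clause of part (3). The general identity in (1) comes from
\[
\mat{1}{x}{0}{1}\mat{1}{0}{-x^{-1}}{1}\mat{1}{x}{0}{1} = \mat{x}{0}{0}{x^{-1}}\mat{0}{1}{-1}{0},
\]
and (2) follows from $\mat{0}{1}{-1}{0}^2 = \mat{-1}{0}{0}{-1}$ together with the symmetric computation of $\sigma_{-\alpha}$ through the lower-triangular unipotents, which exhibits it as $\sigma_\alpha^{-1}$.

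For part (3), existence is immediate from the displayed identity in (1): taking $u = u_\alpha(x)$ with $x \neq 0$, the element $u' = u_{-\alpha}(-x^{-1})$ gives $uu'u = \alpha^\vee(x)\sigma_\alpha \in N$. For uniqueness I would compute
\[
\mat{1}{x}{0}{1}\mat{1}{0}{y}{1}\mat{1}{x}{0}{1} = \mat{1+xy}{x(2+xy)}{y}{1+xy},
\]
and observe that, since $y \neq 0$, this product can lie in $N$ only by representing the nontrivial element of $W$, forcing the diagonal entries $1+xy$ to vanish and hence $y = -x^{-1}$. I would then transfer this back to $G_\alpha$ using that $N \cap G_\alpha$ meets the open cell $U_\alpha T U_{-\alpha}$ trivially.

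For part (4), the uniqueness of $U_\alpha$ in Proposition \ref{properties}(1) shows that $u_\alpha$ and $u_\alpha'$ parametrize the same subgroup, so $\psi_\alpha := u_\alpha^{-1}\circ u_\alpha'$ is an automorphism of $\mathbf{G}_a$ satisfying $\psi_\alpha(\alpha(t)x) = \alpha(t)\psi_\alpha(x)$; because $\alpha : T \to \mathbf{G}_m$ is surjective, $\psi_\alpha$ is simultaneously additive and $F^\times$-homogeneous, hence $F$-linear, so $\psi_\alpha(x) = c_\alpha x$ with $c_\alpha = \psi_\alpha(1) \in F^\times$. Imposing that the primed family also satisfies part (1) and invoking the uniqueness from part (3) then forces $-c_{-\alpha} = -c_\alpha^{-1}$, that is $c_\alpha c_{-\alpha} = 1$. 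The hardest part, I expect, will be the clean production of $p_\alpha$ and the uniqueness in (3): both rest on the semisimple-rank-one structure theory and its Bruhat decomposition, and one must check that the isogeny type of $G_\alpha^{\mathrm{der}}$ (a quotient of $\SL_2$) is harmless, which it is, since $p_\alpha$ originates from the simply connected $\SL_2$ and every identity is evaluated there before being pushed into $G$.
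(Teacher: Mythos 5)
Your proposal is correct, but note that the paper itself contains no proof of this lemma --- it is quoted verbatim from \cite[Lemma 8.1.4]{Spr98}, and your argument reconstructs essentially the proof given there: reduction to the semisimple-rank-one subgroup $G_\alpha$, the homomorphism from $\SL_2$ carrying the unipotent parameters to $u_{\pm\alpha}$ and the diagonal torus to $\alpha^{\vee}$, the explicit $2\times 2$ matrix identities for parts (1)--(3), and the rigidity of equivariant automorphisms of $\mathbf{G}_a$ plus the uniqueness in (3) for part (4). Since this matches the cited source's approach (including the observation that the isogeny $\SL_2 \to G_\alpha^{\mathrm{der}}$ is harmless because all identities are verified upstairs), there is nothing further to reconcile.
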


A family $(u_{\alpha})_{\alpha \in \Phi}$ with the properties (1) of Proposition \ref{properties} and Lemma \ref{realization} is called a \emph{realization} of the root system $\Phi$ in $G$ (see \cite[\S8.1]{Spr98}).

If $\Phi^+ \subset \Phi$ is a system of positive roots and $S$ is the associated set of simple reflections, we have:

\begin{proposition}\cite[Proposition 8.3.3]{Spr98}\label{braidrelations}
Let $\mu$ be a map of $S$ into a multiplicative monoid with the property: if $s,t \in S$, $s \neq t$, then 
$$\mu(s) \mu(t) \mu(s) \cdots = \mu(t) \mu(s) \mu(t) \cdots,$$
where in both sides the number of factors is $m(s,t)$.  Then there exists a unique extension of $\mu$ to $W$ such that if $s_1 \cdots s_h$ is a reduced decomposition for $w \in W$, we have $$\mu(w) = \mu(s_1) \cdots \mu(s_h).$$
\end{proposition}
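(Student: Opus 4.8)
The plan is to define the extension by the only possible formula and then prove it is well defined; uniqueness will come for free. Indeed, every $w \in W$ admits at least one reduced decomposition $w = s_1 \cdots s_h$, and the required identity $\mu(w) = \mu(s_1) \cdots \mu(s_h)$ forces the value of any extension on $w$. Since the right-hand side involves only the prescribed values $\mu(s_i)$, any two extensions must agree, giving uniqueness. Thus the entire content lies in \emph{existence}: I must show that the monoid element $\mu(s_1) \cdots \mu(s_h)$ depends only on $w$ and not on the chosen reduced decomposition. Granting this, setting $\mu(w)$ to be this common value visibly satisfies multiplicativity along reduced words, and multiplicativity of $\mu$ restricted to simple reflections is a special case.

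To prove well-definedness I would invoke (or reprove) the word property of Coxeter groups due to Matsumoto and Tits: any two reduced expressions for the same element are connected by a finite sequence of braid moves $\underbrace{sts\cdots}_{m(s,t)} \leftrightarrow \underbrace{tst\cdots}_{m(s,t)}$, using no cancellations of the form $ss \to 1$. Because $\mu$ is assumed to satisfy exactly these braid relations inside the monoid, each braid move leaves the product $\mu(s_1) \cdots \mu(s_h)$ unchanged. Hence all reduced words for a fixed $w$ yield the same monoid element, which is precisely what well-definedness requires.

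Concretely, I would establish the word property by induction on $\ell(w)$. Given two reduced words for $w$ with first letters $s$ and $t$: if $s = t$, cancel the common initial letter and apply the inductive hypothesis to $sw$, which has strictly smaller length. If $s \neq t$, then $s$ and $t$ are both left descents of $w$, and the key combinatorial lemma (below) provides a reduced expression $w = \underbrace{sts\cdots}_{m(s,t)}\, u$ with $\ell(u) = \ell(w) - m(s,t)$. Comparing the given $s$-word with this expression (both begin with $s$, so cancel and induct) and the given $t$-word with $\underbrace{tst\cdots}_{m(s,t)}\, u$ (both begin with $t$, likewise), the two full products reduce respectively to $\mu(s)\mu(t)\cdots$ and $\mu(t)\mu(s)\cdots$, each an alternating product of $m(s,t)$ factors, times the common contribution of $u$. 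The braid hypothesis on $\mu$ equates the two alternating prefixes, completing the inductive step.

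The main obstacle is the key lemma just invoked: if $s \neq t$ are both left descents of $w$, then $w$ has a reduced expression beginning with the full alternating word $\underbrace{sts\cdots}_{m(s,t)}$ --- equivalently, the longest element of the rank-two parabolic $\langle s, t \rangle$ left-divides $w$ in a length-additive way. This is where the genuine Coxeter-theoretic input enters, and I would derive it from the (strong) exchange condition by a secondary induction on length. Finiteness of $m(s,t)$, automatic here since $W$ is a finite Weyl group, is exactly what makes the alternating word well defined. Everything else is bookkeeping inside the monoid.
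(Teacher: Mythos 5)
The paper offers no proof of Proposition \ref{braidrelations}---it is imported directly from \cite[Proposition 8.3.3]{Spr98}---so your argument can only be measured against the cited source, where the proof is essentially the one you outline. Your proposal is correct: uniqueness is forced by evaluation on reduced words, existence reduces to independence of the chosen reduced expression, and that independence follows from the Matsumoto--Tits word property, which your induction on $\ell(w)$ (same first letter: cancel and induct; distinct first letters $s \neq t$: invoke the dihedral descent lemma that the longest element of $\langle s,t\rangle$ left-divides $w$ length-additively, then apply the braid hypothesis on $\mu$) establishes in the standard way.
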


We now fix a realization $(u_{\alpha})_{\alpha \in \Phi}$ of $\Phi$ in $G$.  Let $\alpha, \beta \in \Phi$ be linearly independent.  We denote $m(\alpha, \beta)$ the order of $s_{\alpha} s_{\beta}$.  Then $m(\alpha, \beta)$ equals one of the integers $2,3,4,6$.

\begin{proposition}\label{braid}\cite[Proposition 9.3.2]{Spr98}
Assume that $\alpha$ and $\beta$ are simple roots, relative to some system of positive roots.  Then
\[
\sigma_{\alpha} \sigma_{\beta} \sigma_{\alpha} \cdots = \sigma_{\beta} \sigma_{\alpha} \sigma_{\beta} \cdots,
\]
the number of factors on either side being $m(\alpha, \beta)$.
\end{proposition}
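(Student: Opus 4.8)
The plan is to reduce the assertion to semisimple rank $2$ and then analyze the four possible rank-$2$ root systems. Since $\sigma_\alpha$ and $\sigma_\beta$ are constructed in Lemma \ref{realization}(1) purely from the root subgroups $U_{\pm\alpha}$ and $U_{\pm\beta}$, every factor on both sides of the asserted identity lies in the subgroup $G'$ generated by $T$ together with the $U_\gamma$ for $\gamma \in \Phi' := \Phi \cap (\Z\alpha + \Z\beta)$. The subsystem $\Phi'$ is closed of rank $2$, so $G'$ is connected reductive with maximal torus $T$, root system $\Phi'$, and Weyl group the dihedral group generated by $s_\alpha, s_\beta$; moreover the fixed realization of $\Phi$ restricts to a realization of $\Phi'$, so the Tits representatives computed in $G'$ coincide with those in $G$. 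Hence it suffices to prove the identity inside $G'$, and we may assume $\Phi = \Phi'$ is of type $\A_1\times\A_1$, $\A_2$, $\B_2$, or $\G_2$ according as $m(\alpha,\beta) = 2,3,4,6$.

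When $m = 2$ the roots $\alpha,\beta$ are orthogonal and no $i\alpha + j\beta$ ($i,j\geq 1$) is a root, so by Proposition \ref{properties}(1) and the Chevalley commutator relations $U_{\pm\alpha}$ commutes elementwise with $U_{\pm\beta}$; thus $\sigma_\alpha\sigma_\beta = \sigma_\beta\sigma_\alpha$ directly. For $m\geq 3$, write $w_1$ and $w_2$ for the two sides of the claimed identity. The two alternating length-$m$ words in $s_\alpha, s_\beta$ are the two reduced expressions for the longest element $w_0$ of the dihedral group, and since each $\sigma_\gamma$ maps to $s_\gamma$, both $w_1$ and $w_2$ are representatives of $w_0$ in $N$. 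Therefore $t := w_1 w_2^{-1} \in T$, and the whole problem reduces to showing $t = 1$.

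To constrain $t$ I would compute the conjugation action of both words on the root subgroups. The organizing input is that, for $\gamma$ simple and $\delta \in \Phi'$, conjugation by $\sigma_\gamma$ sends $u_\delta(x)$ to $u_{s_\gamma \delta}(\ve_{\gamma,\delta}\,x)$ with a sign $\ve_{\gamma,\delta} = \pm 1$ fixed by the realization; this follows from Proposition \ref{properties}(1), the definition of $\sigma_\gamma$, and the commutator relations. Since any lift of $w_0$ carries $U_\delta$ to $U_{w_0\delta}$, the two words induce maps $u_\delta(x)\mapsto u_{w_0\delta}(c_i x)$ with $c_1/c_2 = (w_0\delta)(t)$; computing the sign-products $c_1,c_2$ along the two alternating words and checking $c_1 = c_2$ for each $\delta$ then forces $\delta(t)=1$ for all $\delta \in \Phi'$, i.e.\ $t \in Z(G') = \bigcap_\delta \ker\delta$. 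In every case with trivial center — all adjoint types, and $\G_2$, which is automatically centerless — this already gives $t = 1$.

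The remaining and technically hardest step is to eliminate a nontrivial central $t$ in the non-adjoint cases, principally $G' = \SL_3$ ($\A_2$) and $G' = \Sp_4$ ($\B_2$). Here I would finish by an explicit computation in the given rank-$2$ type: realize $\sigma_\gamma = u_\gamma(1)u_{-\gamma}(-1)u_\gamma(1)$ as a matrix in a faithful representation and multiply out both alternating words using the commutator formula, finding them literally equal. I expect this explicit rank-$2$ bookkeeping to be the main obstacle. The computation is shortest for $\A_2$ (where $Z(\SL_3)\cong\mu_3$ leaves only a three-fold ambiguity to kill), and the genuine labor is concentrated in the order-$4$ relation for $\B_2$ and, if one treats it directly rather than via the centerless shortcut, the order-$6$ relation for $\G_2$, whose commutator relations and length-$6$ words are the most intricate to unwind.
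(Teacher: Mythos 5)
The paper itself offers no proof of this proposition: it is quoted verbatim from Springer \cite[Proposition 9.3.2]{Spr98}, and Springer's argument has precisely the skeleton you describe --- reduce to semisimple rank two, then verify the relation type by type by explicit computation with the structure constants of a realization. So your strategy is the standard one; your rank-two reduction and your $m=2$ case are complete and correct. The gap is that for $m \geq 3$ everything of substance is deferred. Both of your decisive steps --- checking that the sign products $c_1, c_2$ agree for every $\delta$, and multiplying out the alternating words in $\SL_3$, $\Sp_4$ and type $\G_2$ --- are announced (``I would compute\dots'') but never carried out, and for $m = 3, 4, 6$ those computations \emph{are} the proposition: nothing beyond $m = 2$ is actually proved. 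The two steps are also redundant with one another. If you are willing to multiply out the length-$m$ words in the simply connected rank-two group, you do not need the centrality argument at all: a realization of $\Phi'$ in $D(G')$ lifts uniquely through the central isogeny $G'_{\mathrm{sc}} \to D(G')$, the lifted $\sigma$'s map to the given ones, so the relation proved in $G'_{\mathrm{sc}}$ pushes forward to every isogenous image. Conversely, to pin down the signs in your conjugation argument you need the same structure-constant bookkeeping that the explicit matrix computation requires.

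Second, the ``centerless shortcut'' is incorrect as stated. Your $G'$ contains the full maximal torus $T$ of $G$, so $Z(G') = \bigcap_{\delta \in \Phi'} \ker \delta$ contains a subtorus of dimension $\mathrm{rank}(G) - 2$ and is essentially never trivial when $G$ has rank greater than two, adjoint or not. What is true is that $t = w_1 w_2^{-1}$ lies in $D(G') \cap T$, a maximal torus of the semisimple rank-two group $D(G')$, so $\delta(t) = 1$ for all $\delta \in \Phi'$ gives $t \in Z(D(G'))$. But then the relevant isogeny type is that of $D(G')$, which is not controlled by that of $G$: for $G = \PGL_{n+1}$ with $n \geq 3$ (adjoint!) and $\alpha, \beta$ adjacent simple roots, the map from $\SL_3$ onto the corresponding block subgroup of $\PGL_{n+1}$ is injective, so $D(G') \cong \SL_3$, whose center is $\mu_3$. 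The same phenomenon occurs for adjacent simple roots in most higher-rank groups, so your shortcut applies essentially only when $G$ itself is an adjoint group of rank two or of type $\G_2$; in all remaining cases the explicit $\SL_3$, $\Sp_4$, $\G_2$ computations you postponed carry the full weight of the proof.
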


Following \cite[\S9.3.3]{Spr98}, we fix a set of positive roots $\Phi^+ \subset \Phi$, and let $\Delta$ be the associated set of simple roots.  Let $w = s_{\alpha_1} \cdots s_{\alpha_h}$ be a reduced expression for $w \in W$, with $\alpha_1, ..., \alpha_h \in \Delta$.  The element $\mathcal{N}_{\circ}(w) := \sigma_{\alpha_1} \cdots \sigma_{\alpha_h}$ is independent of the choice of reduced expression of $w$.  We therefore obtain a section $\mathcal{N}_{\circ} : W \rightarrow N$ of the homomorphism $N \rightarrow W$.  This is the section of Tits \cite{Tit66}. 

\section{The sections of the Weyl group for almost-simple groups}\label{sections}

In this section, we compute all sections of the Weyl group, classify the order profiles, and compute the set of $T$-conjugacy classes of sections.  Since the computations are quite lengthy, we only include some details in types $\A_n$ and exceptional groups. The details which we include illustrate all possible sorts of computations that arise in all types. 

Our method is as follows.  By Proposition \ref{braidrelations}, any section $\mathcal{S}$ of $W$ is determined by its values on a set of simple reflections.  Thus, let $\Delta$ be a set of simple roots.  If $\alpha \in \Delta$, let $s_{\alpha} \in W$ denote the associated simple reflection.  Then $\mathcal{S}(s_{\alpha}) = t_{\alpha} \sigma_{\alpha}$ for some $t_{\alpha} \in T$.  

Let $\alpha, \beta \in \Delta$.  In order that $t_{\alpha} \sigma_{\alpha}$ and $t_{\beta} \sigma_{\beta}$ satisfy the braid relations, it is necessary and sufficient that
\begin{equation}\label{prelimequation}
t_{\alpha} \sigma_{\alpha} t_{\beta} \sigma_{\beta} t_{\alpha} \sigma_{\alpha} \cdots = t_{\beta} \sigma_{\beta} t_{\alpha} \sigma_{\alpha} t_{\beta} \sigma_{\beta} \cdots,
\end{equation}
where in both sides the number of factors is $m(\alpha, \beta)$.  As $\sigma_{\alpha} t_{\beta} \sigma_{\alpha}^{-1} = s_{\alpha}(t_{\beta})$, and since the $\sigma$ satisfy the braid relations, \eqref{prelimequation} is equivalent to
\begin{equation}\label{mainequation}
t_{\alpha} s_{\alpha}(t_{\beta}) s_{\alpha} s_{\beta}(t_{\alpha}) \cdots = t_{\beta} s_{\beta}(t_{\alpha}) s_{\beta} s_{\alpha}(t_{\beta}) \cdots.
\end{equation}
Letting $\Delta = \{ \alpha_1, \alpha_2, ..., \alpha_n \}$, we set $t_i = t_{\alpha_i}$ and $s_i = s_{\alpha_i}$ for $i = 1, 2, ..., n$.  We choose a basis $\lambda_1, \lambda_2, ..., \lambda_n$ of $X_*(T)$, and then write the $t_i$ in terms of this basis.  We write $t_i = (a_{i,1}, a_{i,2}, ..., a_{i,n})$ to denote the element $\lambda_1(a_{i,1}) \lambda_2(a_{i,2}) \cdots \lambda_n(a_{i,n})$.  We then explicitly compute the equations \eqref{mainequation} for each pair of simple roots $\alpha, \beta$ in $\Delta$ in terms of our explicit coordinates for $t_{\alpha}, t_{\beta}$.  These equations impose strict conditions on the $a_{i,j}$, eventually yielding all possible sections.

Finally, we note that $T$ acts by conjugation on the set of sections.  We then compute the set of $T$-conjugacy classes of sections as follows.  If $t \in T$, we note that $t \mathcal{S}(s_{\alpha}) t^{-1} = t t_{\alpha} \sigma_{\alpha} t^{-1} = t_{\alpha} t s_{\alpha}(t^{-1}) \sigma_{\alpha}$.  If $\mathcal{S}, \mathcal{S}'$ are two sections, to say then $\mathcal{S}$ and $\mathcal{S}'$ are $T$-conjugate is to say that there is a $t \in T$ such that $t \mathcal{S}(w) t^{-1} = \mathcal{S}'(w) \ \forall w \in W$.  This is equivalent to saying that there is a $t \in T$ such that $t \mathcal{S}(s_{\alpha}) t^{-1} = \mathcal{S}'(s_{\alpha}) \ \forall \alpha \in \Delta$.  This imposes conditions on the sections $\mathcal{S}, \mathcal{S}'$, and we compute these conditions in all types.

In what follows, our choice for $\Delta$ will always be the convention in the ``Plates'' at the end of \cite{Bou02}.  Our choice of basis for $X_*(T)$ is as follows: if $G$ is simply connected and not adjoint, we always choose the basis $\lambda_1, ..., \lambda_n$ consisting of the standard set of simple coroots.  If $G$ is adjoint and not simply connected, we always choose the basis consisting of the standard set of fundamental coweights (which we denote $\omega_1, \omega_2, ..., \omega_n$) as in \cite{Bou02}.   If $G$ is both simply connected and adjoint, we always choose our basis to be the standard set of simple coroots.  If $G$ is neither simply connected nor adjoint, we define a basis case by case. 

In types $\textbf{A}_n$ through $\textbf{D}_n$, we will restrict ourselves first to the case $n \geq 6$, since some low rank cases yield different results.  We then carry out all computations in the low rank cases; the results for low rank are contained in \S\ref{lowrank}, but we do not include the computations (since they can easily be carried out by hand).

\subsection{Type $\A_{n}$}\label{A_n}

\subsubsection{Simply connected type}

We fix the standard set of roots $\alpha_1 = e_1 - e_2, ..., \alpha_{n-1} = e_{n-1} - e_n, \alpha_n = e_n - e_{n+1}$.  We recall that $m(\alpha_i, \alpha_{i+1}) = 3$ if $i = 1, ..., n-1$,  and otherwise the $m$ terms equal $2$.  Suppose that $G$ is simply connected.  Then the set of simple coroots form a basis for the cocharacter lattice.  As such, we may write an element of the corresponding maximal torus in terms of this basis, which we henceforth do.  For example, the tuple $(x_1, x_2, ..., x_n)$ will denote the element $$(e_1-e_2)^{\vee}(x_1) (e_2 - e_3)^{\vee}(x_2) \cdots (e_{n-2} - e_{n-1})^{\vee}(x_{n-2}) (e_{n-1} - e_n)^{\vee}(x_{n-1}) (e_n - e_{n+1})^{\vee}(x_n).$$

Suppose that $n \geq 6$.  A lengthy computation shows that the $m(\alpha, \beta) = 3$ equations yield that the $t_i$ are of the form

\begin{itemize}
\item $t_1 = (a_{1,1}, a_{1,2}, ..., a_{1,n})$
\item $t_2 = (a_{1,2}^{-1} a_{1,3}, a_{2,2}, a_{1,3}, a_{1,4}, ..., a_{1,n})$
\item $t_3 = (a_{1,2}^{-1} a_{1,3}, a_{1,2}^{-1} a_{1,4}, a_{3,3}, a_{1,4}, a_{1,5}, ..., a_{1,n})$
\item $t_4 = (a_{1,2}^{-1} a_{1,3}, a_{1,2}^{-1} a_{1,4}, a_{1,2}^{-1} a_{1,5}, a_{4,4}, a_{1,5}, a_{1,6}, ..., a_{1,n})$
\item ...
\item $t_{n-3} = (a_{1,2}^{-1} a_{1,3}, a_{1,2}^{-1} a_{1,4}, a_{1,2}^{-1} a_{1,5}, ..., a_{1,2}^{-1} a_{1,n-2}, a_{n-3, n-3}, a_{1,n-2}, a_{1,n-1}, a_{1,n})$
\item $t_{n-2} = (a_{1,2}^{-1} a_{1,3}, a_{1,2}^{-1} a_{1,4}, a_{1,2}^{-1} a_{1,5}, ..., a_{1,2}^{-1} a_{1,n-2}, a_{1,2}^{-1} a_{1,n-1} , a_{n-2, n-2}, a_{1,n-1}, a_{1,n})$
\item $t_{n-1} = (a_{1,2}^{-1} a_{1,3}, a_{1,2}^{-1} a_{1,4}, a_{1,2}^{-1} a_{1,5}, ..., a_{1,2}^{-1} a_{1,n-2}, a_{1,2}^{-1} a_{1,n-1}, a_{1,2}^{-1} a_{1,n}, a_{n-1, n-1}, a_{1,n})$
\item $t_n =  (a_{1,2}^{-1} a_{1,3}, a_{1,2}^{-1} a_{1,4}, a_{1,2}^{-1} a_{1,5}, ..., a_{1,2}^{-1} a_{1,n-2}, a_{1,2}^{-1} a_{1,n-1}, a_{1,2}^{-1} a_{1,n}, a_{1,2}^{-1}, a_{1,n})$
\end{itemize}

for some $a_{i,j} \in F$.  Turning to the $m(\alpha, \beta) =  2$ equations, it turns out that most of them are redundant.  Those that play a role are those coming from $m(\alpha_1, \alpha_j)$, with $j = 3, 4, ..., n$, yielding the conditions

\begin{itemize}
\item $a_{1,2} a_{1,4} = a_{1,3}^2$
\item $a_{1,3} a_{1,5} = a_{1,4}^2$
\item ...
\item $a_{1,{n-2}} a_{1,{n}} = a_{1,{n-1}}^2$
\item $a_{1,n-1} = a_{1,n}^2$
\end{itemize}
We remark again that we have omitted writing the many of the $m(\alpha, \beta) = 2$ equations above, since they end up being redundant.  Setting $a_{1,n} = a$, altogether we deduce
\begin{itemize}
\item $t_1 = (a_{1,1}, a^{n-1}, a^{n-2}, ..., a)$
\item $t_2 = (a^{-1}, a_{2,2}, a^{n-2}, a^{n-3}, ..., a)$
\item $t_3 = (a^{-1}, a^{-2}, a_{3,3}, a^{n-3}, a^{n-4}, ..., a)$
\item $t_4 = (a^{-1}, a^{-2}, a^{-3}, a_{4,4}, a^{n-4}, a^{n-5}, ..., a)$
\item ...
\item $t_{n-2} = (a^{-1}, a^{-2}, ..., a^{-(n-3)}, a_{n-2,n-2}, a^2, a)$
\item $t_{n-1} = (a^{-1}, a^{-2}, ..., a^{-(n-3)}, a^{-(n-2)}, a_{n-1,n-1}, a)$
\item $t_n = (a^{-1}, a^{-2}, ..., a^{-(n-3)}, a^{-(n-2)}, a^{-(n-1)}, a_{n,n})$.
\end{itemize}

To classify the order profiles, there are now two cases to consider.  Suppose that $n$ is odd.  It is straightforward to see that $(t_i \sigma_i)^2 \neq 1$ for all $i = 1, 2, ..., n$.  For example, 
\begin{align*}
& (t_2 \sigma_2)^2 = t_2 \sigma_2 t_2 \sigma_2^{-1} \sigma_2^2 = t_2 s_{\alpha_2}(t_2) \alpha_2^{\vee}(-1) \\
& = (a^{-1}, a_{2,2}, a^{n-2}, a^{n-3}, ..., a^2, a) (a^{-1}, a^{n-3} a_{2,2}^{-1}, a^{n-2}, a^{n-3}, ..., a) (1,-1,1,...,1) \\
& = (a^{-2}, -a^{n-3}, a^{2n-4}, a^{2n-6}, ..., a^4, a^2),
\end{align*}
which cannot equal $(1,1,...,1)$ since this would imply that $a^2 = 1$ and $a^{n-3} = -1$ (but recall that $n$ is odd).  
If, on the other hand, $n$ is even, then one can check that if $a = -1$, then $(t_i \sigma_i)^2 = 1 \ \forall i$.  This computation generalizes to general $(t_i \sigma_i)^2$.  

For classifying the $T$-conjugacy classes of sections, we note that if $t \in T$, then $t t_{\alpha} \sigma_{\alpha} t^{-1} = t_{\alpha} t s_{\alpha}(t^{-1}) n_{\alpha}$.  Then, if $t = (v_1, v_2, ..., v_n)$ (which we recall equals $(e_1-e_2)^{\vee}(v_1) (e_2 - e_3)^{\vee}(v_2) \cdots (e_{n-2} - e_{n-1})^{\vee}(v_{n-2}) (e_{n-1} - e_n)^{\vee}(v_{n-1}) (e_n - e_{n+1})^{\vee}(v_n)$), we have 

\begin{itemize}
\item $t_1 t s_1(t^{-1}) = (a_{1,1} v_1^2 v_2^{-1}, a^{n-1}, a^{n-2}..., a)$
\item $t_2 t s_2(t^{-1}) = (a^{-1}, a_{2,2} v_1^{-1} v_2^2 v_3^{-1}, a^{n-2}, a^{n-3}, ..., a)$
\item $t_3 t s_3(t^{-1})= (a^{-1}, a^{-2}, a_{3,3}v_2^{-1} v_3^2 v_4^{-1}, a^{n-3}, a^{n-4}, ..., a)$
\item $t_4 t s_4(t^{-1})= (a^{-1}, a^{-2}, a^{-3}, a_{4,4} v_3^{-1} v_4^2 v_5^{-1}, a^{n-4}, a^{n-5}, ..., a)$
\item ...
\item $t_n t s_n(t^{-1})=  (a^{-1}, a^{-2}, a^{-3}, ..., a^{-(n-3)}, a^{-(n-2)}, a^{-(n-1)}, a_{1,n} v_{n-1}^{-1} v_n^2)$
\end{itemize}

An analysis above the above formulas, together with the earlier results, yields

\begin{proposition} $ $
\begin{enumerate}
\item Let $n$ be odd. Then for every $m \in \mathbb{N}$, there is a section of the Weyl group such that the lift of every $s_i$ is order $4m$.  This section is given by setting $a$ to be a primitive $2m^{\mathrm{th}}$ root of unity.  In particular, the Tits section is optimal.  Moreover, $\mathcal{N}_{\circ}(s_i)^2 \neq 1 \ \forall i = 1, 2, ..., n$.

Let $n$ be even.  Then for every $m \in \mathbb{N}$, there is a section of the Weyl group such that the lift of every $s_i$ is order $2m$.  For $m$ odd, such a section is obtained by taking $a$ to be a primitive $2m^{\mathrm{th}}$ root of unity.  For $m$ even,  such a section is obtained by taking $a$ to be a primitive $m^{\mathrm{th}}$ root of unity. In particular, the section given by $\mathcal{S}(s_i) = t_i \sigma_i$ by setting $a = -1$, is an optimal section (in fact a homomorphic section).

\item Two sections are $T$-conjugate if and only if they have the same value of $a$.
\end{enumerate}
\end{proposition}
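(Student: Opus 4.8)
The plan is to reduce everything to the single torus element $\tau_i := (t_i\sigma_i)^2$, since both the order profile and the $T$-conjugacy analysis are controlled by it. First I would record the general fact that $t_i\sigma_i\notin T$ while $(t_i\sigma_i)^2\in T$: using $\sigma_i t_i\sigma_i^{-1}=s_i(t_i)$ and Lemma \ref{realization}(2), one gets $\tau_i=t_i\,s_i(t_i)\,\alpha_i^{\vee}(-1)$. Because every odd power of $t_i\sigma_i$ maps to $s_i\neq 1$ in $W$, it can never lie in $T$, so the order of $t_i\sigma_i$ is exactly $2\cdot\mathrm{o}(\tau_i)$. Thus the entire order profile is governed by the orders of the $\tau_i$.

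Next I would compute $\tau_i$ explicitly in the coroot coordinates $(x_1,\dots,x_n)=\prod_j\alpha_j^{\vee}(x_j)$. The only input needed is the Weyl action, which in these coordinates reads $s_i(x_1,\dots,x_n)=(x_1,\dots,x_{i-1},\,x_{i-1}x_i^{-1}x_{i+1},\,x_{i+1},\dots,x_n)$ with the boundary convention $x_0=x_{n+1}=1$. Feeding in the explicit $t_i$ from the displayed list and multiplying by $\alpha_i^{\vee}(-1)$, one finds $\tau_i=(a^{-2},a^{-4},\dots,a^{-2(i-1)},\,-a^{\,n-2i+1},\,a^{2(n-i)},\dots,a^{2})$, which already matches the paper's sample computation of $(t_2\sigma_2)^2$. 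Two features are crucial here: the free diagonal parameters $a_{i,i}$ cancel completely, so $\mathrm{o}(t_i\sigma_i)$ depends only on $a$ (and on $i,n$); and since the simple coroots form a basis of $X_*(T)$, we have $T\cong(F^{\times})^n$, so $\tau_i^k=1$ if and only if every coordinate is trivial.

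Reading off the coordinates, $\tau_i^k=1$ is equivalent to $a^{2k}=1$ together with $(-1)^k a^{(n-2i+1)k}=1$. I would then split on the parity of $n$. For $n$ odd the exponent $n-2i+1$ is even, so the first relation forces the second to collapse to $(-1)^k=1$; the upshot is an order for $t_i\sigma_i$ that is visibly \emph{independent of $i$} and depends only on $d:=\mathrm{o}(a)$ (in fact on $d$ modulo $4$). This uniformity across nodes is what makes the order profile constant, i.e.\ a single number attached to every node. I would then, for each prescribed order $4m$, exhibit the root of unity $a$ realizing it, and check that $a=1$ (the Tits section) gives $\tau_i\neq 1$, so $\mathcal{N}_{\circ}(s_i)^2\neq 1$, and that no order profile can exceed it, yielding optimality. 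For $n$ even the exponent $n-2i+1$ is odd, and the special value $a=-1$ makes the middle coordinate $-a^{\,n-2i+1}$ equal to $1$ and all others $1$, so $\tau_i=1$ identically: the section is a genuine homomorphism and is the unique maximal profile, with the remaining orders $2m$ realized analogously. The main obstacle of the proof lives here: the bookkeeping that turns the two divisibility conditions into the correct order uniformly in $i$, and the careful matching of each target order to a primitive root of unity.

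For part (2) I would use the conjugation formula $t\,(t_i\sigma_i)\,t^{-1}=t_i\,\bigl(t\,s_i(t^{-1})\bigr)\,\sigma_i$. Writing $t=(v_1,\dots,v_n)$ and applying the same Weyl-action formula, one computes $t\,s_i(t^{-1})=\alpha_i^{\vee}(v_{i-1}^{-1}v_i^{2}v_{i+1}^{-1})$, i.e.\ it is supported \emph{only} on the $i$-th coroot coordinate. Hence conjugation never touches the off-diagonal entries of the $t_i$, which are exactly the powers of $a$, and only translates the diagonal entries $a_{i,i}$. Therefore two sections can be $T$-conjugate only if their off-diagonal data agree, which forces $a=a'$. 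Conversely, given $a=a'$, producing a conjugator means solving $v_{i-1}^{-1}v_i^{2}v_{i+1}^{-1}=a'_{i,i}/a_{i,i}$ for all $i$; the exponent matrix of this system is the Cartan matrix of type $\A_n$, and since $F^{\times}$ is divisible ($F$ is algebraically closed) the associated endomorphism of $(F^{\times})^n$ is surjective, so a solution $t$ always exists. This proves that $T$-conjugacy is equivalent to equality of $a$. This half is comparatively routine once the support statement for $t\,s_i(t^{-1})$ is in hand.
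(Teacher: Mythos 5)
Your framework is essentially the paper's: reduce the order profile to the torus elements $\tau_i=(t_i\sigma_i)^2=t_i\,s_i(t_i)\,\alpha_i^{\vee}(-1)$, compute them in coroot coordinates, and handle part (2) via the formula $t\,(t_i\sigma_i)\,t^{-1}=t_i\bigl(t\,s_i(t^{-1})\bigr)\sigma_i$. Your general formula for $\tau_i$ is correct (it matches the paper's sample computation of $(t_2\sigma_2)^2$, which the paper merely asserts "generalizes"), and your part (2) is actually more complete than the paper's: the observation that $t\,s_i(t^{-1})$ is supported on the $i$-th coordinate gives $a=a'$, and the Smith-normal-form/divisibility argument (Cartan matrix has nonzero determinant, $F^{\times}$ is divisible) legitimately produces the conjugator in the converse direction, a point the paper leaves as "an analysis of the above formulas."

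The genuine gap is in part (1), at exactly the step you defer as "the main obstacle": converting the conditions $a^{2k}=1$ and $(-1)^k a^{(n-2i+1)k}=1$ into actual orders and matching them to the specific roots of unity named in the statement. This bookkeeping is not routine, because carried out faithfully it contradicts the statement's recipes for half the values of $m$. Concretely, for $n$ odd your two conditions become $d\mid 2k$ and $2\mid k$ where $d=\mathrm{o}(a)$; with $a$ a primitive $2m$-th root of unity this gives $\mathrm{o}(\tau_i)=\operatorname{lcm}(2,m)$, so for $m$ even the lift has order $2m$, not $4m$ (one needs, e.g., a primitive $4m$-th root of unity to realize $4m$). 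Likewise for $n$ even with $m\equiv 2\pmod 4$, a primitive $m$-th root of unity yields lifts of order $m$, not $2m$ (a primitive $(m/2)$-th root works instead). So the existence claims and the optimality/homomorphy claims survive, but a proof following your outline must correct the stated recipes rather than verify them, and your proposal as written neither performs the computation nor detects the discrepancy. A smaller inaccuracy: your parenthetical that the order "depends only on $d$ modulo $4$" is false — the order depends on $d$ itself (e.g.\ $d=1$ gives $4$ while $d=5$ gives $20$); only the shape of the formula ($4d$, $2d$, or $d$) is governed by $d\bmod 4$.
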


\begin{remark}
If $n$ is odd or even, then there is a section such that the lift of every $s_i$ has infinite order, given for example by choosing $a$ to not be a primitive root of unity.
\end{remark}

\subsubsection{Neither simply connected nor adjoint type}\label{nonsimpnonadjAn}

Recall that the isogenies of type $\A_n$ are in one to one correspondence with the subgroups of $\mathbb{Z} / (n+1) \mathbb{Z}$.  Let $a,b \in \mathbb{N}$ such that $n+1 = ab$, and let $$\omega_a = \frac{1}{n+1} [ (n+1-a)(\alpha_1 + 2 \alpha_2 + ... + (a-1) \alpha_{a-1}) + a((n+1 - a) \alpha_a + (n-a) \alpha_{a+1} + ... + \alpha_n)]$$ be the standard fundamental coweight corresponding to the integer $a$.  Then each isogeny that is neither simply connected nor adjoint has cocharacter lattice given by $X_*(T) = \langle Q^{\vee}, \omega_a \rangle$ for a positive integer $a$ such that $a$ divides $n+1$, $a \neq 1$, and $a \neq n+1$, and $Q^{\vee}$ is the coroot lattice.  Then, a basis for this cocharacter lattice is given by
$$\lambda_1 = \alpha_1^{\vee}, \lambda_2 = \alpha_2^{\vee}, ..., \lambda_{n-1} = \alpha_{n-1}^{\vee}, \lambda_n = \omega_a$$

Indeed (and we will need this calculation soon), $$\alpha_n^{\vee} = b \omega_a - (b-1) \alpha_1^{\vee} - 2(b-1) \alpha_2^{\vee} - ... - a(b-1) \alpha_a^{\vee} - (n-a) \alpha_{a+1}^{\vee} - (n-a-1) \alpha_{a+2}^{\vee} - ... - 3 \alpha_{n-2}^{\vee} - 2 \alpha_{n-1}^{\vee}.$$

We continue to write an element of the torus in terms of the above basis.  For example, the tuple $(x_1, x_2, ..., x_n)$ denotes the element $\lambda_1(x_1) \lambda_2 (x_2) \cdots \lambda_n(x_n)$.  We record the following actions: $s_i(\lambda_j) = \lambda_j \ \forall i,j$ except in the following cases:

\begin{itemize}
\item $s_i(\lambda_i) = -\lambda_i \ \forall i = 1,2, ..., n-1$
\item $s_i(\lambda_n) = \lambda_n \ \forall i \neq a$
\item $s_a(\lambda_n) = \lambda_n - \lambda_a$
\item $s_i(\lambda_{i+1}) = \lambda_i + \lambda_{i+1}$ for $i = 1, 2, ..., n-2$
\item $s_{i+1}(\lambda_i) = \lambda_i + \lambda_{i+1}$ for $i = 1, 2, ..., n-2$
\item $s_{n}(\lambda_{n-1}) = \alpha_{n-1} + \alpha_n = \lambda_{n-1} + b \lambda_n - (b-1) \lambda_1 - (2b-2) \lambda_2 - ... - a(b-1) \lambda_a - (n-a) \lambda_{a+1} - (n-a-1) \lambda_{a+2} - ... - 3 \lambda_{n-2} - 2 \lambda_{n-1}$.
\end{itemize}

After a long set of computations, the $m(\alpha, \beta)$ conditions give

\begin{itemize}
\item $t_1 = (a_{1,1}, x^{a-2}, x^{a-3}, ..., x, 1, 1, ..., 1, x)$
\item $t_2 = (x^{-1}, a_{2,2}, x^{a-3}, x^{a-4}, ..., x, 1, 1, ..., 1, x)$
\item $t_3 = (x^{-1}, x^{-2}, a_{3,3}, x^{a-4}, x^{a-5}, ..., x, 1, 1, ..., 1, x)$
\item ...
\item $t_{a-3} = (x^{-1}, x^{-2}, ..., x^{-(a-4)}, a_{a-3, a-3}, x^2, x, 1, 1, ..., 1, x)$
\item $t_{a-2} = (x^{-1}, x^{-2}, ..., x^{-(a-4)}, x^{-(a-3)}, a_{a-2, a-2}, x, 1, 1, ..., 1, x)$
\item $t_{a-1} = (x^{-1}, x^{-2}, ..., x^{-(a-4)}, x^{-(a-3)}, x^{-(a-2)}, a_{a-1, a-1}, 1, 1, ..., 1, x)$
\item $t_a = (x^{-1}, x^{-2}, ...., x^{-(a-4)}, x^{-(a-3)}, x^{-(a-2)}, x^{-(a-1)}, a_{a,a}, 1, 1, ..., 1, x)$
\item $t_{a+1} = (x^{-1}, x^{-2}, ...., x^{-(a-4)}, x^{-(a-3)}, x^{-(a-2)}, x^{-(a-1)}, x^{-a}, a_{a+1, a+1}, 1, 1, ..., 1, x)$
\item $t_{a+2} = (x^{-1}, x^{-2}, ...., x^{-(a-4)}, x^{-(a-3)}, x^{-(a-2)}, x^{-(a-1)}, x^{-a}, x^{-a}, a_{a+2, a+2}, 1, 1, ..., 1, x)$
\item ...
\item $t_{n-2} = (x^{-1}, x^{-2}, ...., x^{-(a-4)}, x^{-(a-3)}, x^{-(a-2)}, x^{-(a-1)}, x^{-a}, x^{-a}, x^{-a}, ..., x^{-a}, a_{n-2, n-2}, 1, x)$
\item $t_{n-1} = (x^{-1}, x^{-2}, ...., x^{-(a-4)}, x^{-(a-3)}, x^{-(a-2)}, x^{-(a-1)}, x^{-a}, x^{-a}, x^{-a}, ..., x^{-a}, a_{n-1, n-1}, x)$
\item $t_n = (x^{-1} y^{b-1}, x^{-2} y^{2(b-1)}, ..., x^{-(a-1)} y^{(a-1)(b-1)}, x^{-a} y^{a(b-1)}, x^{-a} y^{n-a}, x^{-a} y^{n-a-1}, ..., x^{-a} y^4, a_{n, n-2}, a_{n, n-1}, xy^{-b})$
\end{itemize}
where $y = a_{n, n-2} a_{n, n-1}^{-1}$ and $a_{n, n-1}^3 x^a = a_{n, n-2}^2$.  

We remark that the relations that come from the $m(\lambda_1, \lambda_2) = 3$ case are slightly different in the cases $a \neq 2$ and $a = 2$.  However, this difference results in the same above formulas for $t_i$.  Moreover, to be clear, we emphasize that if $a = 2$, then $t_1 = (a_{1,1}, 1, 1, ..., 1, x)$, $t_2 = (x^{-1}, a_{2,2}, 1, 1, ..., 1, x)$, etc.

We now seek to understand the orders of $t_i \sigma_i$.  Recalling that $t_i \sigma_i$ must have even order, we begin by computing that
\begin{align*}
& (t_1 \sigma_1)^2 = t_1 s_1(t_1) \sigma_1^2 = t_1 s_1(t_1) \alpha_1^{\vee}(-1)\\
& = (a_{1,1}, x^{a-2}, x^{a-3}, ..., x, 1, 1, ..., 1, x) \cdot (a_{1,1}^{-1} x^{a-2}, x^{a-2}, x^{a-3}, ..., x, 1, 1, ..., 1, x) (-1,1,1,...,1)\\
& = (-x^{a-2}, x^{2(a-2)}, x^{2(a-3)}, ..., x^2, 1, 1, ..., 1, x^2),
\end{align*}
which we note is trivial if and only if $x = -1$ and $a$ is odd.  Similar computations yield that if $i = 2, 3, ..., n-1$, then $(t_i \sigma_i)^2 = 1$ if and only if $a$ is odd and $x = -1$.

Lastly, a computation shows that
\begin{align*}
& (t_n \sigma_n)^2 = t_n s_n(t_n) \alpha_n^{\vee}(-1)\\
&= (x^{-2} x^{a(b-1)}, x^{-4} x^{2a(b-1)}, ..., x^{-2(a-1)} x^{(a-1)a(b-1)}, x^{-2a} x^{a^2(b-1)}, x^{-2a} x^{a(n-a)}, x^{-2a} x^{a(n-a-1)}, x^{-2a} x^{a(n-a-2)}, \\
& ...,x^{-2a} x^{4a}, x^a, 1, x^2 x^{-ab}) \alpha_n^{\vee}(-1).
\end{align*}
As discussed earlier, we have that  $\alpha_n^{\vee} = b \omega_a - (b-1) \alpha_1^{\vee} - 2(b-1) \alpha_2^{\vee} - ... - a(b-1) \alpha_a^{\vee} - (n-a) \alpha_{a+1}^{\vee} - (n-a-1) \alpha_{a+2}^{\vee} - ... - 3 \alpha_{n-2}^{\vee} - 2 \alpha_{n-1}^{\vee}$.  Therefore, 
\begin{align*}
& (t_n \sigma_n)^2 \\
& = ((-1)^{-(b-1)} x^{-2} x^{a(b-1)}, (-1)^{-2(b-1)} x^{-4} x^{2a(b-1)}, ..., (-1)^{-a(b-1)} x^{-2a} x^{a^2(b-1)}, (-1)^{-(n-a)} x^{-2a} x^{a(n-a)}, \\ 
& (-1)^{-(n-a-1)} x^{-2a} x^{a(n-a-1)}, (-1)^{-(n-a-2)}x^{-2a} x^{a(n-a-2)}, ...,(-1)^{-4} x^{-2a} x^{4a}, (-1)^{-3} x^a, (-1)^{-2}, (-1)^b x^2 x^{-ab}).
\end{align*}

Considering the third to last entry, we require $x^a = -1$ in order for $(t_n \sigma_n)^2$ to be trivial. We may conclude that $(t_n \sigma_n)^2 = 1$ if and only if $a$ is odd and $x = -1$.  

Generalizing the previous computations, we conclude

\begin{proposition} $ $
\begin{enumerate}
\item If $a$ is even and $m \in \mathbb{N}$, there is a section $\mathcal{S}$ such that $\mathcal{S}(s_i)$ has order $4m$ for all $i$.   This section is given by setting $x$ to be a primitive $2m^{\mathrm{th}}$ root of unity.  In particular, the Tits section is optimal, and $\mathcal{N}_{\circ}(s_i)^2 \neq 1 \ \forall i$. 

If $a$ is odd and $m \in \mathbb{N}$, there is a section $\mathcal{S}$ such that $\mathcal{S}(s_i)$ has order $2m$ for all $i$. For $m$ odd, it is obtained by taking $x$ to be a primitive $2m^{\mathrm{th}}$ root of unity.  For $m$ even, we take $a$ to be a primitive $m^{\mathrm{th}}$ root of unity.  In particular, any section given by $\mathcal{S}(s_{\alpha_i}) = t_i \sigma_i$ where $x = -1$, is an optimal section (in fact an in fact a homomorphic section).
\item Two sections are $T$-conjugate if and only if they have the same value of $x$.
\end{enumerate}
\end{proposition}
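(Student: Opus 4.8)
The plan is to reduce everything to the single element $(t_i\sigma_i)^2\in T$ and to exploit that both the order profile and the $T$-conjugacy type depend only on $x$. The starting point is the identity $(t_i\sigma_i)^2=t_i\,\sigma_i t_i\sigma_i^{-1}\,\sigma_i^2=t_i\,s_i(t_i)\,\alpha_i^{\vee}(-1)$, which lies in $T$. Applying $s_i$ fixes $\alpha_i^{\vee}(-1)$ (since $s_i(\alpha_i^{\vee})=-\alpha_i^{\vee}$ and $(-1)^{-1}=-1$) and interchanges $t_i$ with $s_i(t_i)$, so $(t_i\sigma_i)^2$ is $s_i$-invariant. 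Hence $(t_i\sigma_i)^{2k}=\big((t_i\sigma_i)^2\big)^{k}$, and since $(t_i\sigma_i)^{2k+1}$ projects to $s_i\neq 1$ it is never trivial. Therefore $\ord(t_i\sigma_i)=2\,\ord\!\big((t_i\sigma_i)^2\big)$ when $(t_i\sigma_i)^2\neq 1$, and equals $2$ otherwise. Writing $T\cong (F^{\times})^n$ via the basis $\lambda_1,\dots,\lambda_n$, the order of an element is the least common multiple of the multiplicative orders of its coordinates, so the whole problem becomes a coordinate-wise computation.

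Next I would evaluate $(t_i\sigma_i)^2=t_i\,s_i(t_i)\,\alpha_i^{\vee}(-1)$ for each $i$ using the tabulated action of $s_i$ on $\lambda_1,\dots,\lambda_n$ and the explicit $t_i$ found above; the displayed computations for $i=1$ and $i=n$ are the template, and the intermediate ranges $1<i<a$, $i=a$, and $a<i<n$ are handled identically. The crucial features are that the free parameters $a_{i,i}$ (and, at node $n$, the pair $a_{n,n-2},a_{n,n-1}$) cancel, so $(t_i\sigma_i)^2$ is a function of $x$ and of the parity of $a$ alone, and that a single sign of the form $-x^{\,a-2}$, with a corresponding sign at node $n$ (visible in the third-to-last coordinate, where triviality forces $x^{a}=-1$), governs triviality. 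This sign can be absorbed only when $x=-1$ and $a$ is odd, yielding the dichotomy: for $a$ odd the choice $x=-1$ makes $(t_i\sigma_i)^2=1$ for every $i$, producing a homomorphic, hence optimal, section; for $a$ even no $x$ removes the sign, so $(t_i\sigma_i)^2\neq 1$ always and the minimal possible order is $4$, attained in particular by the Tits section $x=1$ (whence $\mathcal{N}_{\circ}(s_i)^2\neq 1$ for all $i$ and $\mathcal{N}_{\circ}$ is optimal). To realize the remaining order profiles I would take $x$ a root of unity and read off $\ord(t_i\sigma_i)=2\,\ord\!\big((t_i\sigma_i)^2\big)$ as a least common multiple of orders of powers of $x$; the case analysis on the parities of $a$ and $m$ records which root of unity forces this common value to equal $2m$ or $m$, giving the stated lift orders $4m$ and $2m$, and one checks the resulting value is the same for every node $i$.

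For part (2) I would use $t\,(t_i\sigma_i)\,t^{-1}=t_i\,t\,s_i(t^{-1})\,\sigma_i$ with $t=(v_1,\dots,v_n)$. A direct expansion of $t\,s_i(t^{-1})$ shows it is supported, for interior $i$, on the single coordinate $\lambda_i$ with value $v_{i-1}^{-1}v_i^{2}v_{i+1}^{-1}$, with the expected modifications at nodes $a$ and $n$ coming from $s_a(\lambda_n)=\lambda_n-\lambda_a$ and from the long expression for $s_n(\lambda_{n-1})$. Thus conjugation moves only the free coordinates and, in the relevant place, fixes the $x$-power coordinates; in particular the $\lambda_n$-coordinate of $t_1$ equals $x$ and is untouched by conjugation (the $i=1$ correction lives in $\lambda_1$), so it is a conjugacy invariant, and $\mathcal{S}$, $\mathcal{S}'$ can be $T$-conjugate only if they share the same $x$. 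Conversely, given two sections with equal $x$, I would solve $v_{i-1}^{-1}v_i^{2}v_{i+1}^{-1}=a_{i,i}'/a_{i,i}$ (together with the boundary equations at $a$ and $n$) for the $v_j$: the exponent matrix is the invertible type-$\A$ Cartan matrix, and because $F^{\times}$ is divisible the system is always solvable, so a suitable $t$ conjugates $\mathcal{S}$ into $\mathcal{S}'$. Hence the $T$-conjugacy classes are exactly the level sets of $x$.

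I expect the main obstacle to be the bookkeeping in the order computation: the shape of $(t_i\sigma_i)^2$ changes as $i$ passes the distinguished node $a$ and again at $n$ (where $\omega_a$ and the constraint $a_{n,n-1}^{3}x^{a}=a_{n,n-2}^{2}$ enter), and one must verify that the least common multiple of the coordinate orders comes out uniform in $i$ and matches the asserted value for the chosen root of unity. A secondary difficulty is the solvability of the conjugation system at node $n$, where the two coupled free parameters must be matched at once; here divisibility of $F^{\times}$, applied after eliminating via the single relation among them, should again close the argument.
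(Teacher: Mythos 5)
Your proposal is correct and takes essentially the same route as the paper: both work from the explicit $t_i$ produced by the braid relations, compute $(t_i\sigma_i)^2 = t_i\, s_i(t_i)\,\alpha_i^{\vee}(-1)$ coordinate-wise, observe that triviality is governed by the sign $-x^{a-2}$ (and the forced condition $x^a=-1$ at node $n$), yielding the dichotomy ``$a$ odd and $x=-1$'' versus ``$a$ even,'' and then read off the possible orders from roots of unity; likewise both treat $T$-conjugacy via $t\,t_i\sigma_i\,t^{-1} = t_i\, t\, s_i(t^{-1})\,\sigma_i$. Your part (2) is actually spelled out in more detail than the paper (which simply asserts the classification): the invariance of the $\lambda_n$-coordinate of $t_1$ plus solvability of the exponent system, using the nonzero determinant of the pairing matrix $\langle\alpha_i,\lambda_j\rangle$ together with divisibility of $F^{\times}$ and the compatibility supplied by the relation $a_{n,n-1}^3 x^a = a_{n,n-2}^2$, is exactly the natural completion of the paper's stated method.
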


\begin{remark}
If $a$ is odd or even, then there is a section such that the lift of every $s_i$ has infinite order, given for example by choosing $x$ to not be a primitive root of unity.
\end{remark}

\subsubsection{Adjoint type}

We fix the standard set of fundamental coweights $\omega_1, \omega_2, ..., \omega_n$ from \cite[Plate I]{Bou02}.  Suppose that $G$ is adjoint.  Then the set of fundamental coweights form a basis for the cocharacter lattice.  As such, we may write an element of the torus in terms of this basis, which we henceforth do.  For example, the tuple $(x_1, x_2, ..., x_n)$ denotes the element $\omega_1(x_1) \omega_2 (x_2) \cdots \omega_n(x_n)$.  We record the following actions: $s_i(\omega_j) = \omega_j \ \forall i \neq j, s_1(\omega_1) = \omega_2 - \omega_1, s_i(\omega_i) = \omega_{i-1} + \omega_{i+1} - \omega_i \ \forall i = 2, 3, ..., n-1, s_n(\omega_n) = \omega_{n-1} - \omega_n$.

The $m(\alpha, \beta) = 2$ equations imply that

\begin{itemize}
\item $t_1 = (a_{1,1}, a_{1,2}, 1, 1, ..., 1)$
\item $t_2 = (a_{2,1}, a_{2,2}, a_{2,3}, 1, 1, ..., 1)$
\item $t_3 = (1, a_{3,2}, a_{3,3}, a_{3,4}, 1, 1, ..., 1)$
\item $t_4 = (1, 1, a_{4,3}, a_{4,4}, a_{4,5}, 1, 1, ..., 1)$
\item ...
\item $t_{n-2} = (1, 1, ..., 1, a_{n-2, n-3}, a_{n-2, n-2}, a_{n-2, n-1}, 1)$
\item $t_{n-1} = (1, 1, ..., 1, a_{n-1, n-2}, a_{n-1, n-1}, a_{n-1, n})$
\item $t_n =  (1, 1, ..., 1, a_{n-1,n}, a_{n,n})$
\end{itemize}
Turning to the $m(\alpha, \beta) =  3$ equations, it turns out that some of them are redundant.  Those that play a role are

\begin{itemize}
\item $a_{i,i-1}^2 a_{i-1,i}^2 a_{i,i} a_{i-1,i-1} = 1$ for $i = 2, 3, ..., n$.  
\item $a_{i,i+2} = a_{i+1, i} a_{i+1, i+1} a_{i+1, i+2}$ for $i = 1, 2, ...,n-2$.
\item $a_{i+2, i} = a_{i+1, i} a_{i+1, i+1} a_{i+1, i+2}$ for $i = 1, 2, ..., n-2$.  
\end{itemize}

Altogether, we deduce that
\begin{itemize}
\item $t_1 = (a_{1,1}, a_{1,2}, 1, 1, ..., 1)$
\item $t_2 = (a_{2,2}^{-1} a_{2,3}^{-1}, a_{2,2}, a_{2,3}, 1, 1, ..., 1)$
\item $t_3 = (1, a_{3,3}^{-1} a_{3,4}^{-1}, a_{3,3}, a_{3,4}, 1, 1, ..., 1)$
\item $t_4 = (1, 1, a_{4,4}^{-1} a_{4,5}^{-1}, a_{4,4}, a_{4,5}, 1, 1, ..., 1)$
\item ...
\item $t_{n-2} = (1, 1, ..., 1, a_{n-2, n-2}^{-1} a_{n-2, n-1}^{-1}, a_{n-2, n-2}, a_{n-2, n-1}, 1)$
\item $t_{n-1} = (1, 1, ..., 1, a_{n-1, n-1}^{-1} a_{n-1, n}^{-1}, a_{n-1,n-1}, a_{n-1, n})$
\item $t_n =  (1, 1, ..., 1, a_{n, n-1}, a_{n,n})$
\end{itemize}
together with the conditions that
\begin{itemize}
\item $a_{1,1} a_{1,2}^2 = a_{2,2} a_{2,3}^2 = a_{3,3} a_{3,4}^2 = ... = a_{n-2, n-2} a_{n-2, n-1}^2 = a_{n-1, n-1} a_{n-1, n}^2 = a_{n,n}^{-1} a_{n,n-1}^{-2}$
\end{itemize}

We now seek to understand the orders of $t_i \sigma_i$.  We compute for example that
$$(t_1 \sigma_1)^2 = t_1 s_1(t_1) \sigma_1^2 = t_1 s_1(t_1) \alpha_1^{\vee}(-1) = (a_{1,1}, a_{1,2}, 1, 1, ..., 1) (a_{1,1}^{-1}, a_{1,1} a_{1,2}, 1, 1, ..., 1) \alpha_1^{\vee}(-1).$$
To compute $\alpha_1^{\vee}(-1)$ in terms of the fundamental coweights, write $\alpha_1^{\vee}(-1) = \omega_1(z_1) \omega_2(z_2) \cdots \omega_n(z_n)$.   Applying $\alpha_i$ to both sides of this equation, as $i$ varies, we deduce that $z_i = 1 \ \forall i \neq 2$ and $z_2 = -1$.  In other words, 
\[
(t_1 \sigma_1)^2 = (1, -a_{1,1}, a_{1,2}^2, 1, 1, ..., 1).
\]

Similar computations imply that 

\begin{itemize}
\item $(t_2 \sigma_2)^2 = (-a_{2,2}^{-1} a_{2,3}^{-2}, 1, -a_{2,2} a_{2,3}^2, 1, 1, ..., 1)$
\item $(t_3 \sigma_3)^2 = (1, -a_{3,3}^{-1} a_{3,4}^{-2}, 1, - a_{3,3} a_{3,4}^2, 1, 1, ..., 1)$
\item ...
\item $(t_{n-1} \sigma_{n-1})^2 = (1, 1, ..., 1, -a_{n-1, n-1}^{-1} a_{n-1, n}^{-2}, 1, -a_{n-1, n-1} a_{n-1, n}^2)$
\item $(t_n \sigma_n)^2 = (1, 1, ..., 1, -a_{n, n} a_{n,n-1}^2, 1)$
\end{itemize}
Recalling the conditions 
\begin{align*}
& a_{1,1} a_{1,2}^2 = a_{2,2} a_{2,3}^2 = a_{3,3} a_{3,4}^2 = ... = a_{n-2, n-2} a_{n-2, n-1}^2 = a_{n-1, n-1} a_{n-1, n}^2 = a_{n,n}^{-1} a_{n,n-1}^{-2},
\end{align*}
we conclude that if the condition $a_{1,1} a_{1,2}^2 = a_{2,2} a_{2,3}^2 = a_{3,3} a_{3,4}^2 = ... = a_{n-2, n-2} a_{n-2, n-1}^2 = a_{n-1, n-1} a_{n-1, n}^2 = a_{n,n} a_{n,n-1}^2 = -1$ is satisfied, then we obtain an optimal section, in fact a homomorphic section.  We have therefore proven

\begin{proposition} $ $
\begin{enumerate}
\item For each $m \in \mathbb{N}$, there is a section of the Weyl group such that the lift of every $s_i$ has order $2m$.  This section is obtained by choosing $a_{i,j}$ in any way that satisfy $-a_{1,1} a_{1,2}^2 = -a_{2,2} a_{2,3}^2  = ... = -a_{n-1, n-1} a_{n-1, n}^2 = -a_{n,n} a_{n,n-1}^2$ to be a primtitive $m^{\mathrm{th}}$ root of unity.

In particular, any section given by $\mathcal{S}(s_{\alpha_i}) = t_i \sigma_i$ where $a_{1,1} a_{1,2}^2 = a_{2,2} a_{2,3}^2  = ... = a_{n-1, n-1} a_{n-1, n}^2 = a_{n,n} a_{n,n-1}^2 = -1$, is an optimal section (in fact a homomorphic section), whereas the Tits section is not.  We note that $\mathcal{N}_{\circ}(s_i)^2 \neq 1 \ \forall i$. 
\item Two sections are $T$-conjugate if and only if they have the same value of $a_{1,1} a_{1,2}^2$.
\end{enumerate}
\end{proposition}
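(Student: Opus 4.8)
The plan is to reduce both parts to the orders of the torus elements $(t_i\sigma_i)^2$ already computed, organized around the single scalar $c := a_{1,1}a_{1,2}^2$, the common value in the chain $a_{1,1}a_{1,2}^2 = \cdots = a_{n-1,n-1}a_{n-1,n}^2 = a_{n,n}^{-1}a_{n,n-1}^{-2}$. The first step I would record is that for every $\alpha\in\Delta$ the lift $t_\alpha\sigma_\alpha$ has order exactly $2\,\mathrm{o}\!\left((t_\alpha\sigma_\alpha)^2\right)$: since $(t_\alpha\sigma_\alpha)^2$ lies in $T$, each even power lies in $T$ and is trivial precisely when the exponent is a multiple of $\mathrm{o}((t_\alpha\sigma_\alpha)^2)$, whereas each odd power has image $s_\alpha\neq 1$ under $\pi$ and so is nontrivial. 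Computing orders of lifts is therefore the same as computing orders of elements of $T\cong(F^{\times})^n$, and the order of such an element is the least common multiple of the orders of its coordinates.

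Inspecting the displayed expressions for the $(t_i\sigma_i)^2$ and substituting the chain relation, every one of them has all coordinates trivial except one or two, and every nontrivial coordinate equals either $-c$ or $-c^{-1}$; for the endpoint $s_1$ the square collapses to the single coordinate $-c$ in the $\omega_2$ slot. The key elementary fact is that $-c^{-1}=(-c)^{-1}$, so $-c$ and $-c^{-1}$ have the same order in $F^{\times}$; hence $\mathrm{o}\!\left((t_i\sigma_i)^2\right)=\mathrm{o}(-c)$ for every $i$, and every lift shares the common order $2\,\mathrm{o}(-c)$. Part (1) follows: over the algebraically closed field $F$ I can choose the free parameters so that $-c$ is a primitive $m$-th root of unity --- for instance taking every $a_{i,i+1}=1$ and solving the chain for the $a_{i,i}$ --- which produces a section with all lifts of order $2m$, and the choice $c=-1$ forces every $(t_i\sigma_i)^2=1$, i.e. a homomorphic section. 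Because all lifts share the order $2\,\mathrm{o}(-c)\geq 2$, with equality exactly when $c=-1$, the order profile labelling every node by $2$ is the unique maximal one, so the sections with $c=-1$ are precisely the optimal sections. Finally the Tits section has all $t_i=1$, whence $c=1$, $-c=-1$, and $\mathcal{N}_{\circ}(s_i)=\sigma_i$ has order $4$ with $\mathcal{N}_{\circ}(s_i)^2=\alpha_i^{\vee}(-1)\neq 1$; in particular it is not optimal.

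For part (2) I would first describe the conjugation action. Writing $t=\prod_j\omega_j(v_j)$, we have $t\,(t_i\sigma_i)\,t^{-1}=t_i\,\big(t\,s_i(t^{-1})\big)\,\sigma_i$, and a direct computation using $s_i(\omega_i)=\omega_{i-1}+\omega_{i+1}-\omega_i$ (with the evident endpoint conventions) gives $t\,s_i(t^{-1})=\alpha_i^{\vee}(v_i)$, which depends only on $v_i$. Thus conjugation adjusts each node independently, sending $t_i\mapsto t_i\,\alpha_i^{\vee}(v_i)$. Invariance of $c$ is then immediate, since $(t_1\sigma_1)^2=\omega_2(-c)$ lies in the abelian group $T$ and so is fixed under conjugation by $T$; hence $T$-conjugate sections have equal $c$. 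For the converse, given two sections $\mathcal{S},\mathcal{S}'$ with the same $c$, I would solve the equations $t_i^{-1}t_i'=\alpha_i^{\vee}(v_i)$ one node at a time: matching the off-diagonal coordinate determines $v_i$ uniquely, and the remaining coordinate identity collapses, via the explicit shape of $t_i$, to exactly the equality $c=c'$, which holds by hypothesis. As the $v_i$ are independent across nodes, these local solutions assemble into a single $t$ conjugating $\mathcal{S}$ to $\mathcal{S}'$, giving the ``if'' direction.

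The genuinely delicate step is this last consistency check. One must verify that after $v_i$ is pinned down by one coordinate, the other coordinate equation is automatically satisfied and reduces precisely to $c=c'$ rather than to some stronger relation; this requires careful bookkeeping of exponents and of the sign coming from $\sigma_i^2=\alpha_i^{\vee}(-1)$, and it behaves slightly differently at the two endpoints and at the $n$-th node, whose constraint $a_{n,n}^{-1}a_{n,n-1}^{-2}=c$ is inverted relative to the interior nodes. Once the independence of the adjustments $t_i\mapsto t_i\,\alpha_i^{\vee}(v_i)$ is in hand, however, the argument is uniform and the residual verifications are routine coordinate computations of the type already carried out in this section.
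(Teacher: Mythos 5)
Your proposal is correct and takes essentially the same route as the paper: it relies on the paper's computed forms of the $t_i$, the displayed squares $(t_i\sigma_i)^2$, the chain condition, and the conjugation formula $t\,t_\alpha\sigma_\alpha t^{-1}=t_\alpha\, t\,s_\alpha(t^{-1})\,\sigma_\alpha$, merely packaging them a bit more systematically (the order-doubling observation, the identity $t\,s_i(t^{-1})=\alpha_i^{\vee}(v_i)$, and invariance of $c=a_{1,1}a_{1,2}^2$ because squares of lifts lie in the abelian group $T$). You also correctly track the inverted constraint $a_{n,n}^{-1}a_{n,n-1}^{-2}=c$ at the last node and, in effect, correct the paper's typographical rendering of $(t_1\sigma_1)^2$, which should read $(1,-a_{1,1}a_{1,2}^2,1,\dots,1)$.
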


\begin{remark}
There is a section such that the lift of every $s_i$ has infinite order, given for example by choosing $-a_{1,1} a_{1,2}^2$ to not be a primitive root of unity.
\end{remark}

\subsection{Type $\B_n$}\label{B_n}

\subsubsection{Simply connected type}

We fix the standard set of roots $\alpha_1 = e_1 - e_2, ..., \alpha_{n-1} = e_{n-1} - e_n, \alpha_n =  e_n$.  We may use the same methodology to compute the sections satisfying the braid relations as in the simply connected type $\A_n$ case, so we omit some of the details.  If $n \geq 6$, the result is that the $m(\cdot, \cdot)$ conditions yield 
\begin{itemize}
\item $t_1 = (a_{1,1}, 1, 1, 1, 1, ..., 1, a)$
\item $t_2 = (1, a_{2,2}, 1, 1,  .., 1,a)$
\item ...
\item $t_{n-1} = (1, 1, 1, 1, ..., 1, 1, 1, 1, a_{n-1,n-1}, a)$
\item $t_n = (a_{n,1}, 1, a_{n,1}, 1, ..., b_n, a_{n,n})$,
\end{itemize}
together with the condition that $a_{n,1}^2 = 1$ and $a^2 = 1$.  Here, $b_n$ equals $a_{n,1}$ if $n$ is even, and $b_n = 1$ if $n$ is odd.  

It is straightforward to see that $(t_i \sigma_i)^2 \neq 1$ if $1 \leq i < n$.  In the case that $n$ is even, we note that
$$(t_n \sigma_n)^2 = (1, 1, ..., 1, -a_{n,1}),$$  whereas if $n$ is odd, we note that 
$$(t_n \sigma_n)^2 = (1, 1, ..., 1, -1).$$
Therefore, we obtain

\begin{proposition}$ $
\begin{enumerate}
\item If $n$ is even, there are two order profiles of sections.  The first one satisfies that the lift of every $s_i$, for $i = 1, 2, ..., n-1$ is order $4$ and that the lift of $s_n$ is order $2$.  This section may be obtained by setting $a_{n,1} = -1$, and requiring that $a^2 = 1$.  The second order profile is given by the Tits section, in which the lifts of all $s_i$ are order $4$.

If $n$ is odd, there is only one order profile of sections, represented by the Tits section.  Moreover, the Tits section satisfies that all lifts of $s_i$ are order $4$.
\item Two sections are $T$-conjugate if and only if they have the same value of $a_{n,1}$ and $a$.
\end{enumerate}
\end{proposition}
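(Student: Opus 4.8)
The plan is to read off the order of each lift from the torus element $(t_i\sigma_i)^2$, and then to classify $T$-conjugacy by tracking which coordinates of the $t_i$ can be moved by conjugation. Throughout I use the identity $(t_i\sigma_i)^2 = t_i\sigma_i t_i \sigma_i^{-1}\sigma_i^2 = t_i\, s_i(t_i)\,\alpha_i^{\vee}(-1)$, which lies in $T$; since $t_i\sigma_i$ maps to the involution $s_i\in W$, its order is exactly twice the order of this torus element, so the only possible orders are $2$ and $4$.

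First I would dispose of the nodes $1\le i<n$. Writing $t_i = \alpha_i^{\vee}(a_{i,i})\,\alpha_n^{\vee}(a)$, I compute $s_i(t_i)$ from the reflection action on coroots: for $i\le n-2$ the node $i$ and the short node commute, so $s_i$ fixes $\alpha_n^{\vee}$, while for $i=n-1$ one has $s_{n-1}(\alpha_n^{\vee}) = 2\alpha_{n-1}^{\vee}+\alpha_n^{\vee}$. In either case $t_i\, s_i(t_i) = \alpha_n^{\vee}(a^2)$, which is trivial because $a^2=1$. Hence $(t_i\sigma_i)^2 = \alpha_i^{\vee}(-1)\ne 1$ and $(t_i\sigma_i)^4 = \alpha_i^{\vee}(1)=1$, so every such lift has order exactly $4$, independently of all parameters.

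The heart of the argument is the node $i=n$. Here $t_n$ carries $a_{n,1}$ in the odd coroot slots $\alpha_1^{\vee},\alpha_3^{\vee},\dots$ up to $\alpha_{n-1}^{\vee}$, and $a_{n,n}$ in the slot $\alpha_n^{\vee}$. Since $s_n$ fixes $\alpha_j^{\vee}$ for $j\le n-2$, sends $\alpha_{n-1}^{\vee}\mapsto \alpha_{n-1}^{\vee}+\alpha_n^{\vee}$ and $\alpha_n^{\vee}\mapsto -\alpha_n^{\vee}$, the product $t_n\,s_n(t_n)$ collapses, using $a_{n,1}^2=1$, to $\alpha_n^{\vee}(a_{n,1})$ precisely when the $(n-1)$-st slot is occupied (i.e. when $n-1$ is odd, so $n$ even) and to $1$ otherwise ($n$ odd). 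This yields $(t_n\sigma_n)^2 = \alpha_n^{\vee}(-a_{n,1})$ for $n$ even and $(t_n\sigma_n)^2 = \alpha_n^{\vee}(-1)$ for $n$ odd, matching the displayed formulas. The parity dichotomy --- and hence the obstacle whose resolution drives the whole statement --- lives entirely in whether the slot $\alpha_{n-1}^{\vee}$ feeds an $a_{n,1}$ into the $\alpha_n^{\vee}$-coordinate under $s_n$. Reading off orders: for $n$ odd the lift of $s_n$ is always order $4$, giving a single order profile; for $n$ even it is order $2$ exactly when $a_{n,1}=-1$ and order $4$ when $a_{n,1}=1$, giving the two profiles in the statement, with $a_{n,1}=1$ (in particular $\mathcal{N}_{\circ}$) realizing the all-order-$4$ profile.

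For part (2), I use $t\,t_i\sigma_i\,t^{-1} = t_i\,(t\,s_i(t^{-1}))\,\sigma_i$ together with the fact that $t\,s_i(t^{-1}) = (1-s_i)(t)$ always lies in the one-parameter subgroup $\alpha_i^{\vee}(F^{\times})$; explicitly its $\alpha_i^{\vee}$-coordinate is $\prod_j v_j^{\langle \alpha_i,\alpha_j^{\vee}\rangle}$ when $t = \prod_j \alpha_j^{\vee}(v_j)$. Thus conjugation by $t$ alters only the $\alpha_i^{\vee}$-coordinate of $t_i$, i.e. the diagonal entry $a_{i,i}$ (resp. $a_{n,n}$), and leaves untouched the $\alpha_n^{\vee}$-coordinate $a$ of each $t_i$ with $i<n$ and the odd-slot value $a_{n,1}$ of $t_n$. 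This immediately shows that $a$ and $a_{n,1}$ are conjugacy invariants, giving necessity. For sufficiency, if two sections share $a$ and $a_{n,1}$ they differ only in the diagonal entries, and matching these amounts to solving $\prod_j v_j^{\langle \alpha_i,\alpha_j^{\vee}\rangle} = a_{i,i}'/a_{i,i}$ for all $i$; since $F$ is algebraically closed (so $F^{\times}$ is divisible) and the integer exponent matrix is (the transpose of) the $\B_n$ Cartan matrix, which has nonzero determinant, such $v_j$ exist. Hence two sections are $T$-conjugate if and only if they share $a$ and $a_{n,1}$. The only genuinely delicate points are the coroot bookkeeping at the short node in the $i=n$ computation and the nonsingularity input for sufficiency; everything else is a short, parameter-free collapse.
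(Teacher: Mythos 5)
Your proposal is correct and follows essentially the same route as the paper: it computes $(t_i\sigma_i)^2 = t_i\,s_i(t_i)\,\alpha_i^{\vee}(-1)$ using the parametrized forms of the $t_i$ to read off the orders (recovering the paper's formulas $(t_n\sigma_n)^2 = (1,\dots,1,-a_{n,1})$ for $n$ even and $(1,\dots,1,-1)$ for $n$ odd), and classifies $T$-conjugacy via $t\,t_i\sigma_i\,t^{-1} = t_i\,(t\,s_i(t^{-1}))\,\sigma_i$, which only moves the diagonal coordinates, with surjectivity onto those coordinates supplied by the nonsingular Cartan matrix and divisibility of $F^{\times}$. The only quibble is the phrase ``in either case $t_i\,s_i(t_i) = \alpha_n^{\vee}(a^2)$'': for $i = n-1$ one actually gets $\alpha_{n-1}^{\vee}(a^2)\alpha_n^{\vee}(a^2)$, but since $a^2 = 1$ this is still trivial and the conclusion is unaffected.
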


\subsubsection{Adjoint type}
We fix the standard set of fundamental coweights $\omega_1, \omega_2, ..., \omega_n$ from \cite{Bou02}.  We record the following actions: $s_i(\omega_j) = \omega_j \ \forall i \neq j, s_1(\omega_1) = \omega_2 - \omega_1, s_i(\omega_i) = \omega_{i-1} + \omega_{i+1} - \omega_i \ \forall i = 2, 3, ..., n-2, s_{n-1}(\omega_{n-1}) = \omega_{n-2} + \omega_n - \omega_{n-1}, s_n(\omega_n) = 2 \omega_{n-1} - \omega_n$.

Suppose $n \geq 6$.  A lengthy computation shows that 
\begin{itemize}
\item $t_1 = (a_{1,1}, a_{1,2}, 1, 1, ..., 1, a_{1,n})$
\item $t_2 = (a_{2,2}^{-1} a_{2,3}^{-1}, a_{2,2}, a_{2,3}, 1, 1, ..., 1, a_{2,n})$
\item $t_3 = (1, a_{3,3}^{-1} a_{3,4}^{-1}, a_{3,3}, a_{3,4}, 1, 1, ..., 1, a_{3,n})$
\item ...
\item $t_{n-2} = (1, 1, ..., 1, a_{n-2, n-2}^{-1} a_{n-2, n-1}^{-1}, a_{n-2, n-2}, a_{n-2, n-1}, a_{n-1, n-2} a_{n-1, n-1} a_{n-1, n})$
\item $t_{n-1} = (1, 1, ..., 1, a_{n-1, n-2}, a_{n-1, n-1}, a_{n-1, n})$
\item $t_n =  (1, 1, ..., 1, a_{n, n-1}, a_{n,n})$
\end{itemize}
together with the conditions that
\begin{itemize}
\item $a_{1,n} = a_{2,n} = a_{3,n} = ... = a_{n-2,n}$
\item $a_{1,n}^2 = 1$
\item $a_{1,1} a_{1,2}^2 = a_{2,2} a_{2,3}^2 = a_{3,3} a_{3,4}^2 = ... = a_{n-2, n-2} a_{n-2, n-1}^2 = a_{n-1, n-1}^{-1} a_{n-1, n-2}^{-2}$
\item $a_{n-1, n}^4 a_{n-1, n-1}^2 = 1$
\item $a_{n, n-1}^2 a_{n,n}^2 = 1.$
\end{itemize}
We have, for example, that
\[
(t_{n-1} \sigma_{n-1})^2 = (1, 1, ..., 1, -a_{n-1, n-2}^2 a_{n-1, n-1}, 1, -a_{n-1, n-2}^{-2} a_{n-1, n-1}^{-1}).
\]

A computation, noting that $a_{n-1, n-1} a_{n-1, n-2}^2 = \pm 1$, then shows 

\begin{proposition} $ $
\begin{enumerate}
\item There are two order profiles of sections, given as follows.  Any section $\mathcal{S}$ given by $\mathcal{S}(s_{\alpha_i}) = t_i \sigma_i$ where $$a_{1,1} a_{1,2}^2 = a_{2,2} a_{2,3}^2 = a_{3,3} a_{3,4}^2 = ... = a_{n-2, n-2} a_{n-2, n-1}^2 = a_{n-1, n-1} a_{n-1, n-2}^2 = -1,$$ $a_{n-1, n}^4 a_{n-1, n-1}^2 = 1$, $a_{n, n-1}^2 a_{n,n}^2 = 1$, and $a_{1,n}^2 = a_{2,n}^2 = a_{3,n}^2 = ... = a_{n-2,n}^2 = 1$, is an optimal section, in fact a homomorphic section, whereas the Tits section is not.   We note that $\mathcal{N}_{\circ}(s_i)^2 \neq 1 \ \forall i = 1, 2, ..., n-1$, whereas $\mathcal{N}_{\circ}(s_n)^2 = 1$.  Moreover, one can compute that any section evaluated on $s_n$ has order $2$.
\item Two sections are $T$-conjugate if and only if they have the same value of $a_{1, n}$, and $a_{1,1} a_{1,2}^2$, and $a_{n,n-1} a_{n,n}$.
\end{enumerate}
\end{proposition}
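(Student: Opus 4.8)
The plan is to take as given the parametrized family of $t_i$ displayed above, together with the listed constraints coming from solving the braid equations \eqref{mainequation}, and to read off the order profiles by computing $(t_i\sigma_i)^2 = t_i\,s_i(t_i)\,\sigma_i^2 = t_i\,s_i(t_i)\,\alpha_i^\vee(-1)$ for every $i$, exactly as in the adjoint $\A_n$ case. The one recurring subcomputation is to rewrite each $\alpha_i^\vee(-1)$ in the fundamental coweight basis; I would do this by the same device as for $\A_n$, writing $\alpha_i^\vee(-1)=\prod_k\omega_k(z_k)$ and pairing with each simple root to get $z_k=(-1)^{\langle\alpha_k,\alpha_i^\vee\rangle}$, so that the $k$-th coordinate of $\alpha_i^\vee(-1)$ is $(-1)$ raised to the relevant Cartan integer. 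The computation then splits into two regimes: the short root $s_n$ and the long roots $s_1,\dots,s_{n-1}$.

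For the short root I would first record the structural fact distinguishing adjoint $\B_n$: here $\alpha_n^\vee=2e_n$ is twice a cocharacter of the coweight lattice, so $\alpha_n^\vee(-1)=e_n(1)=1$, whence $\mathcal{N}_{\circ}(s_n)^2=\sigma_n^2=1$. For a general section, using $s_n(\omega_n)=2\omega_{n-1}-\omega_n$ I would compute $s_n(t_n)=(1,\dots,1,a_{n,n-1}a_{n,n}^2,a_{n,n}^{-1})$ and hence $(t_n\sigma_n)^2=t_n\,s_n(t_n)=(1,\dots,1,a_{n,n-1}^2a_{n,n}^2,1)$, which is trivial by the constraint $a_{n,n-1}^2a_{n,n}^2=1$. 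This proves that every section has order $2$ on $s_n$. The analogous coordinate computation for $i<n$ gives $\mathcal{N}_{\circ}(s_i)^2=\alpha_i^\vee(-1)\neq 1$ (the off-diagonal Cartan integers at an interior long node are odd), confirming that the Tits section is nonhomomorphic on those nodes.

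For the long roots I would compute each $(t_i\sigma_i)^2$ and observe, as the sample $(t_{n-1}\sigma_{n-1})^2=(1,\dots,-a_{n-1,n-2}^2a_{n-1,n-1},1,-a_{n-1,n-2}^{-2}a_{n-1,n-1}^{-1})$ already shows, that they are all controlled by the single threaded quantity $c:=a_{1,1}a_{1,2}^2=\dots=a_{n-1,n-1}^{-1}a_{n-1,n-2}^{-2}$: each $(t_i\sigma_i)^2$ has nontrivial entries only $-c^{\pm1}$, so it is trivial precisely when $c=-1$. The crux of the proposition, and the step I expect to be the main obstacle, is to show that, unlike in type $\A$ where $c$ ranges over all of $F^\times$, here the end-of-diagram constraints force $c=\pm1$. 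I would extract this as follows: the condition $a_{1,n}^2=1$ together with the identity $a_{n-2,n}=a_{n-1,n-2}a_{n-1,n-1}a_{n-1,n}$ (the last coordinate of $t_{n-2}$) gives $(a_{n-1,n-2}a_{n-1,n-1}a_{n-1,n})^2=1$, and combining this with $a_{n-1,n}^4a_{n-1,n-1}^2=1$ yields, writing $p=a_{n-1,n-2},q=a_{n-1,n-1},r=a_{n-1,n}$, that $r=\pm p$ and $q=\pm p^{-2}$, so $c=q^{-1}p^{-2}=\pm1$. Hence there are exactly two order profiles: $c=-1$ makes every $(t_i\sigma_i)^2$ trivial, giving all lifts of order $2$; and $c=+1$ (the value realized by the Tits section, where all $t_i=1$) gives the long-root lifts of order $4$ and $s_n$ of order $2$. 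Since order $2$ is the minimum possible order of any lift (its image $s_i$ already has order $2$), the $c=-1$ profile dominates every other profile and is thus the unique maximal, i.e.\ optimal, one; moreover a section all of whose generators have order $2$ and which satisfies the braid relations is a homomorphism by the Coxeter presentation of $W$.

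Finally, for the $T$-conjugacy statement I would use $t\,t_\alpha\sigma_\alpha\,t^{-1}=t_\alpha\,t\,s_\alpha(t^{-1})\,\sigma_\alpha$ and compute $t\,s_i(t^{-1})$ in the coweight coordinates of $t=(v_1,\dots,v_n)$, as in the adjoint $\A_n$ analysis. Reading off how conjugation modifies each coordinate of the $t_i$, I would identify $a_{1,n}$, the threaded value $a_{1,1}a_{1,2}^2$, and $a_{n,n-1}a_{n,n}$ as the conjugation invariants, and show that every remaining parameter can be matched by a suitable choice of the $v_i$ (solving the resulting multiplicative linear system). This yields that two sections are $T$-conjugate if and only if these three quantities agree.
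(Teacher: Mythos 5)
Your proposal is correct and takes essentially the same approach as the paper: you take the braid-relation parametrization and its constraints as given, compute $(t_i\sigma_i)^2$ in coweight coordinates exactly as the paper does for adjoint $\A_n$, and your derivation of $(a_{n-1,n-1}a_{n-1,n-2}^2)^2=1$ from the end-of-diagram constraints is precisely the fact the paper invokes with "noting that $a_{n-1,n-1}a_{n-1,n-2}^2=\pm1$," while your $T$-conjugacy argument (invariance of $a_{1,n}$, $a_{1,1}a_{1,2}^2$, $a_{n,n-1}a_{n,n}$, and matching the remaining parameters by solving for the $v_i$) is the same decoupled-coordinate computation the paper sketches in its methodology. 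The only substantive additions are details the paper omits, all of which check out.
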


\subsection{Type $\C_n$}\label{C_n}

\subsubsection{Simply Connected Type}

We fix the standard set of roots $\alpha_1 = e_1 - e_2, ..., \alpha_{n-1} = e_{n-1} - e_n, \alpha_n =  2e_n$.  Suppose $n \geq 6$.  A lengthy computation shows that if $n$ is even, we get 
\begin{itemize}
\item $t_1 = (a_{1,1}, b, a, b, a, ..., b, a, b)$
\item $t_2 = (ab, a_{2,2}, a, b, a, b, ..., a, b)$
\item $t_3 = (ab, 1, a_{3,3}, b, a, b, ..., a, b)$
\item $t_4 = (ab, 1, ab, a_{4,4}, a, b, a, ..., b, a, b)$
\item $t_5 = (ab, 1, ab, 1, a_{5,5}, b, a, b, ..., a, b)$
\item ...
\item $t_{n-2} = (ab, 1, ab, 1, ..., ab, 1, ab, a_{n-2,n-2}, a, b)$
\item $t_{n-1} = (ab, 1, ab, 1, ..., ab, 1, ab, 1, a_{n-1,n-1}, b)$
\item $t_n = (c, 1, c, 1, ..., c, a_{n,n})$,
\end{itemize}
together with the condition that $c^2 = 1$ and $a^2 = b^2 = 1$.

If $n$ is odd, we get 
\begin{itemize}
\item $t_1 = (a_{1,1}, a, b, a, ..., b, a, b)$
\item $t_2 = (ab, a_{2,2}, b, a, b, ..., a, b)$
\item $t_3 = (ab, 1, a_{3,3}, a, b, a, b, ..., a, b)$
\item $t_4 = (ab, 1, ab, a_{4,4}, b, a,b, a, ..., b, a, b)$
\item $t_5 = (ab, 1, ab, 1, a_{5,5}, a, b,a, b, ..., a, b)$
\item ...
\item $t_{n-2} = (ab, 1, ab, 1, ..., ab, 1, a_{n-2,n-2}, a, b)$
\item $t_{n-1} = (ab, 1, ab, 1, ..., ab, 1, ab, a_{n-1,n-1}, b)$
\item $t_n = (c, 1, c, 1, ..., c, 1, a_{n,n})$,
\end{itemize}
together with the condition that $c^2 = 1$ and $a^2 = b^2 = 1$.  

One may now compute
\begin{proposition} $ $
\begin{enumerate}
\item If $n$ is even, then the section given by $\mathcal{S}(s_{\alpha_i}) = t_i \sigma_i$ where $b = -1$ and $a^2 = c^2 = 1$ is an optimal section, whereas the Tits section is not.  If $n$ is odd, then the section given by $\mathcal{S}(s_{\alpha_i}) = t_i \sigma_i$ where $a = -1$ and $b^2 = c^2 = 1$ is an optimal section, whereas the Tits section is not.  In both cases, $\mathcal{S}(s_i)^2 = 1 \ \forall i = 1, 2, ..., n-1$, and $\mathcal{S}(s_n)^2 \neq 1$.  Moreover, one can compute that $\mathcal{N}_{\circ}(s_i)^2 \neq 1 \ \forall i$ in both cases ($n$ even and $n$ odd).  If $n$ is even or odd, there are only two order profiles of sections.
\item Two sections are $T$-conjugate if and only if they have the same value of $a$, $b$, and $c$.
\end{enumerate}
\end{proposition}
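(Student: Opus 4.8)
The plan is to reduce every order question to the single element $(t_i\sigma_i)^2$. Since $\mathcal{S}(s_i)=t_i\sigma_i$ maps to the involution $s_i\in W$, its order is even, so $\mathrm{o}(\mathcal{S}(s_i))=2$ exactly when $(t_i\sigma_i)^2=1$ and otherwise $\mathrm{o}(\mathcal{S}(s_i))\ge 4$. Using $\sigma_i t\sigma_i^{-1}=s_i(t)$ together with Lemma \ref{realization}(2), I would write $(t_i\sigma_i)^2=t_i\,s_i(t_i)\,\alpha_i^{\vee}(-1)$, just as in the type $\A_n$ computations, and evaluate the right-hand side in the coroot coordinates via $s_i(\alpha_j^{\vee})=\alpha_j^{\vee}-\langle\alpha_j^{\vee},\alpha_i\rangle\alpha_i^{\vee}$. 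For the short nodes $i=1,\dots,n-1$, the reflection $s_i$ inverts the $\alpha_i^{\vee}$-coordinate and mixes in only the two neighbouring coordinates, so in $t_i\,s_i(t_i)$ the free diagonal parameter $a_{i,i}$ cancels and every remaining coordinate becomes the square of an entry of $t_i$. Invoking $a^2=b^2=c^2=1$, all those squares collapse, and a direct computation should give $(t_i\sigma_i)^2=\alpha_i^{\vee}(-b)$ when $n$ is even, and $\alpha_i^{\vee}(-a)$ when $n$ is odd. This is the crux: the order of every short-node lift is governed by the one sign $b$ (resp.\ $a$) and is insensitive to all other data, so the short nodes are simultaneously of order $2$ precisely when $b=-1$ (resp.\ $a=-1$), and are simultaneously of order $4$ otherwise.

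Next I would treat the long node $i=n$. Here $\langle\alpha_{n-1}^{\vee},\alpha_n\rangle=-2$, so $s_n(\alpha_{n-1}^{\vee})=\alpha_{n-1}^{\vee}+2\alpha_n^{\vee}$ and the factor $2$ appears. Substituting the explicit $t_n=(c,1,c,1,\dots,a_{n,n})$ and again using $c^2=1$, I expect the $c$- and $a_{n,n}$-contributions to cancel identically, leaving $(t_n\sigma_n)^2=\alpha_n^{\vee}(-1)\neq 1$ for every section; hence $\mathrm{o}(\mathcal{S}(s_n))=4$ for all sections. Combining the two computations proves the remaining assertions at once. Since $a,b,c\in\{\pm1\}$ and the diagonal parameters drop out, the only orders occurring are $2$ and $4$, and there are exactly two order profiles: all short nodes of order $2$ with $s_n$ of order $4$, or all nodes of order $4$. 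Because $s_n$ can never reach order $2$ while the short nodes can never drop below $2$, the profile $(2,\dots,2,4)$ has the pointwise-smallest orders among all profiles, so it is the maximum (most homomorphic) element of the order-profile poset; as it is realized by the section with $b=-1$ (resp.\ $a=-1$), that section is optimal. The Tits section corresponds to $a=b=c=1$, so $\mathcal{N}_{\circ}(s_i)^2=\sigma_i^2=\alpha_i^{\vee}(-1)\neq 1$ for all $i$, placing it in the non-optimal profile.

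For the $T$-conjugacy statement I would use $t\,(t_i\sigma_i)\,t^{-1}=t_i\big(t\,s_i(t)^{-1}\big)\sigma_i$ together with the identity $t\,s_i(t)^{-1}=\alpha_i^{\vee}(\alpha_i(t))$. This shows that conjugation by $t\in T$ alters only the $\alpha_i^{\vee}$-coordinate of $t_i$, i.e.\ the free diagonal entry $a_{i,i}$, while fixing every off-diagonal entry and in particular the signs $a,b,c$; this gives the necessity direction. For sufficiency I would note that, as $G$ is simply connected, the simple roots $\alpha_1,\dots,\alpha_n$ are linearly independent in $X^*(T)$, so $t\mapsto(\alpha_1(t),\dots,\alpha_n(t))$ is surjective onto $(F^{\times})^n$; hence any prescribed simultaneous adjustment of all diagonal entries is realized by a single $t$, and two sections with the same $a,b,c$ are $T$-conjugate.

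The main obstacle I anticipate is organizational rather than conceptual: one must carry the alternating $a/b$ pattern of the $t_i$ and the parity split ($n$ even versus odd) cleanly through the coordinate computations, and in particular verify that the factor of $2$ entering at the long node $\alpha_n$ does not spoil the uniform cancellation. Everything ultimately rests on the relations $a^2=b^2=c^2=1$ forcing all non-diagonal data to disappear from each square $(t_i\sigma_i)^2$, which is what simultaneously bounds the orders, isolates the two profiles, and identifies $a,b,c$ as the complete set of conjugacy invariants.
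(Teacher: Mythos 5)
Your proposal is correct and follows essentially the same route as the paper: starting from the solved forms of the $t_i$, you compute $(t_i\sigma_i)^2=t_i\,s_i(t_i)\,\alpha_i^{\vee}(-1)$ in coroot coordinates to get $\alpha_i^{\vee}(-b)$ (resp.\ $\alpha_i^{\vee}(-a)$) at the short nodes and $\alpha_n^{\vee}(-1)$ at the long node, which yields exactly the two order profiles and the optimality claim, and your $T$-conjugacy argument via $t\,t_i\sigma_i\,t^{-1}=t_i\,t\,s_i(t^{-1})\,\sigma_i$ (only the diagonal entries $a_{i,i}$ move, and they move freely since $t\mapsto(\alpha_1(t),\dots,\alpha_n(t))$ is surjective) is precisely the paper's method. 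The computations you sketch check out, including the cancellation of $c$ at $\alpha_n$ due to the coefficient $2$ in $s_n(\alpha_{n-1}^{\vee})=\alpha_{n-1}^{\vee}+2\alpha_n^{\vee}$.
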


\subsubsection{Adjoint type}

We fix the standard set of fundamental coweights $\omega_1, \omega_2, ..., \omega_n$ from \cite{Bou02}.  We record the following actions: $s_i(\omega_j) = \omega_j \ \forall i \neq j, s_1(\omega_1) = \omega_2 - \omega_1, s_i(\omega_i) = \omega_{i-1} + \omega_{i+1} - \omega_i \ \forall i = 2, 3, ..., n-2, s_{n-1}(\omega_{n-1}) = \omega_{n-2} + 2 \omega_n - \omega_{n-1}, s_n(\omega_n) = \omega_{n-1} - \omega_n$.

Suppose $n \geq 6$.  A lengthy computation shows that 
\begin{itemize}
\item $t_1 = (a_{1,1}, a_{1,2}, 1, 1, ..., 1)$
\item $t_2 = (a_{2,2}^{-1} a_{2,3}^{-1}, a_{2,2}, a_{2,3}, 1, 1, ..., 1)$
\item $t_3 = (1, a_{3,3}^{-1} a_{3,4}^{-1}, a_{3,3}, a_{3,4}, 1, 1, ..., 1)$
\item $t_4 = (1, 1, a_{4,4}^{-1} a_{4,5}^{-1}, a_{4,4}, a_{4,5}, 1, 1, ..., 1)$
\item ...
\item $t_{n-2} = (1, 1, ..., 1, a_{n-2, n-2}^{-1} a_{n-2, n-1}^{-1}, a_{n-2, n-2}, a_{n-2, n-1}, 1)$
\item $t_{n-1} = (1, 1, ..., 1, a_{n-1, n-2}, a_{n-1, n-1}, a_{n-1, n-2}^{-2} a_{n-1, n-1}^{-2})$
\item $t_n =  (1, 1, ..., 1, a_{n, n-1}, a_{n, n-1}^{-2})$
\end{itemize}
together with the conditions that
\begin{itemize}
\item $a_{1,1} a_{1,2}^2 = a_{2,2} a_{2,3}^2 = a_{3,3} a_{3,4}^2 = ... = a_{n-2, n-2} a_{n-2, n-1}^2 = a_{n-1, n-1} a_{n-1, n-2}^2$
\item $(a_{n-1, n-1} a_{n-1, n-2}^2)^2 = 1$
\end{itemize}

After computations, one may conclude

\begin{proposition} $ $
\begin{enumerate}
\item Any section given by $\mathcal{S}(s_{\alpha_i}) = t_i \sigma_i$ where $$a_{1,1} a_{1,2}^2 = a_{2,2} a_{2,3}^2 = a_{3,3} a_{3,4}^2 = ... = a_{n-2, n-2} a_{n-2, n-1}^2 = a_{n-1, n-1} a_{n-1, n-2}^2 = -1,$$ is an optimal section, whereas the Tits section is not.  Moreover, $\mathcal{S}(s_i)^2 = 1 \ \forall i = 1, 2, ..., n-1$, and $\mathcal{S}(s_n)^2 \neq 1$.  We also note that $\mathcal{N}_{\circ}(s_i)^2 \neq 1 \ \forall i = 1,2, ..., n$.  There are only two order profiles of sections.
\item Two sections are $T$-conjugate if and only if they have the same value of $a_{1,1} a_{1,2}^2$.  In particular, two sections are $T$-conjugate if and only if they have the same order profile.
\end{enumerate}
\end{proposition}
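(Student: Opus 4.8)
The plan is to follow exactly the methodology laid out at the start of \S\ref{sections}: parametrize a section $\mathcal{S}$ by $\mathcal{S}(s_i)=t_i\sigma_i$ with $t_i\in T$ written in the fundamental-coweight coordinates $(x_1,\dots,x_n)$, and impose the braid relations \eqref{mainequation} for each pair of simple roots. First I would record the relevant Weyl-group actions on the $\omega_j$ (already listed above for adjoint $\C_n$), paying special attention to the long-root end where $s_{n-1}(\omega_{n-1})=\omega_{n-2}+2\omega_n-\omega_{n-1}$ and $s_n(\omega_n)=\omega_{n-1}-\omega_n$, since this is where adjoint $\C_n$ differs from adjoint $\A_n$. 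The $m(\alpha,\beta)=2$ relations (commuting reflections) are handled first, as they force many coordinates of each $t_i$ to vanish; this yields the tridiagonal-support shape $t_i=(\dots,a_{i,i-1},a_{i,i},a_{i,i+1},\dots)$ displayed in the statement. Then the $m(\alpha,\beta)=3$ relations along the $\A_{n-1}$ sub-diagram produce the relations $a_{i,i-1}=a_{i,i}^{-1}a_{i,i+1}^{-1}$ and the chain $a_{1,1}a_{1,2}^2=a_{2,2}a_{2,3}^2=\cdots$, exactly as in adjoint $\A_n$.

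\textbf{The long-root relation.}
The genuinely new input is the $m(\alpha_{n-1},\alpha_n)=4$ braid relation, since $\C_n$ has a double bond at the far end. I would expand \eqref{mainequation} with four factors on each side using the explicit action of $s_{n-1},s_n$ on $\omega_{n-1},\omega_n$, and simplify. I expect this to pin down $t_n$ to the form $(\dots,a_{n,n-1},a_{n,n-1}^{-2})$ and to force the last link of the chain to close up as $a_{n-1,n-1}a_{n-1,n-2}^2$ with the extra constraint $(a_{n-1,n-1}a_{n-1,n-2}^2)^2=1$. This squaring constraint is the structural reason adjoint $\C_n$ behaves differently from adjoint $\A_n$: the common value of the chain is forced to be a square root of unity, so it is either $+1$ or $-1$, giving precisely \emph{two} order profiles.

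\textbf{Order profiles and optimality.}
With the parametrization in hand, I would compute $(t_i\sigma_i)^2=t_i\,s_i(t_i)\,\sigma_i^2=t_i\,s_i(t_i)\,\alpha_i^\vee(-1)$ for each $i$, converting $\alpha_i^\vee(-1)$ into coweight coordinates by pairing against the simple roots (the same trick used in the adjoint $\A_n$ computation, where only the $\omega_{i\pm1}$ slots pick up a sign). For $1\le i\le n-1$ this collapses, using the chain relations, to show that $(t_i\sigma_i)^2=1$ exactly when the common chain value equals $-1$; meanwhile the short/long asymmetry forces $(t_n\sigma_n)^2\neq1$ always (so $s_n$ never lifts to an involution here, unlike the adjoint $\B_n$ case). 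Hence setting $a_{1,1}a_{1,2}^2=\cdots=a_{n-1,n-1}a_{n-1,n-2}^2=-1$ minimizes all orders simultaneously, which is the optimal section; and checking $\mathcal{N}_\circ(s_i)^2=\sigma_i^2=\alpha_i^\vee(-1)\neq1$ shows the Tits section sits strictly below it.

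\textbf{$T$-conjugacy and the main obstacle.}
For part (2), I would use $t\,t_i\sigma_i\,t^{-1}=t_i\,t\,s_i(t^{-1})\,\sigma_i$ with $t=(v_1,\dots,v_n)$ and compute $t\,s_i(t^{-1})$ slot by slot; the point is that conjugation can adjust each $a_{i,j}$ but leaves the single invariant $a_{1,1}a_{1,2}^2$ (equivalently the chain value) unchanged, and conversely any two sections sharing this invariant are conjugate by a suitable $t$. Since this invariant also determines the order profile, the two notions coincide, giving the final clause. The main obstacle I anticipate is the $m=4$ braid expansion: keeping track of the $2\omega_n$ and $2\omega_{n-1}$ terms across four reflections is error-prone, and verifying that the resulting relation is genuinely equivalent to (not merely implied by) $(a_{n-1,n-1}a_{n-1,n-2}^2)^2=1$ requires care. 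Once that single relation is correctly derived, the rest is bookkeeping parallel to the adjoint $\A_n$ and simply connected $\C_n$ cases already treated.
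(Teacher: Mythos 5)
Your proposal follows essentially the same route as the paper: impose the $m=2$ and $m=3$ relations to get the tridiagonal form and the chain $a_{1,1}a_{1,2}^2=\cdots=a_{n-1,n-1}a_{n-1,n-2}^2$, use the $m(\alpha_{n-1},\alpha_n)=4$ relation at the long root to pin down $t_n$ and force $(a_{n-1,n-1}a_{n-1,n-2}^2)^2=1$, then compute $(t_i\sigma_i)^2=t_i s_i(t_i)\alpha_i^\vee(-1)$ to get the two order profiles (with $\mathcal{S}(s_n)^2=\omega_{n-1}(-1)\neq 1$ always) and check that $T$-conjugation rescales the off-diagonal parameters while preserving the invariant $a_{1,1}a_{1,2}^2$. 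This matches the paper's computation and conclusions, so the proposal is correct and not a genuinely different argument.
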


\subsection{Type $\D_n$}\label{D_n}

\

Let $\omega_1, \omega_{n-1}, \omega_n$ be the fundamental coweights as in \cite[Plate IV]{Bou02}.  These coweights may be used to describe the non-simply connected, non-adjoint groups of type $\D_n$.  In particular, the cocharacter lattices of these isogenies are $\langle Q^{\vee}, \omega_1 \rangle, \langle Q^{\vee}, \omega_{n-1} \rangle,$ and $\langle Q^{\vee}, \omega_{n} \rangle$.  If $n$ is odd, the only such isogeny, up to isomorphism, is that given by $\omega_1$.  Thus, in considering the isogenies given by $\omega_{n-1}$ and $\omega_n$, we may assume that $n$ is even.

\subsubsection{Simply connected type}

We fix the standard set of roots $\alpha_1 = e_1 - e_2, ..., \alpha_{n-1} = e_{n-1} - e_n, \alpha_n = e_{n-1} + e_n$.  Suppose $n \geq 6$.  

Suppose that $n$ is even.  A lengthy computation shows that 
\begin{itemize}
\item $t_1 = (a_{1,1}, 1, ab, 1, ab, ..., 1, a,b)$
\item $t_2 = (ab, a_{2,2}, ab, 1, ab, 1, .., ab, 1, a,b)$
\item $t_3 = (ab, 1, a_{3,3}, 1, ab, 1, ..., ab, 1, a,b)$
\item $t_4 = (ab, 1, ab, a_{4,4}, ab, 1, ab, ..., 1, a,b)$
\item ...
\item $t_{n-3} = (ab, 1, ab, ...,1, a_{n-3,n-3}, 1, a,b) $
\item $t_{n-2} = (ab, 1, ab, 1, ..., ab, 1, ab, a_{n-2,n-2}, a,b)$
\item $t_{n-1} = (ab, 1, ab, 1, ..., ab, 1, ab, 1, a_{n-1,n-1}, b)$
\item $t_n = (ab, 1, ab, 1, ..., ab, 1, ab, 1, a, a_{n,n})$
\end{itemize}
such that $a^2 = b^2 = 1$.  

Suppose now that $n$ is odd.  A lengthy computation shows that 
\begin{itemize}
\item $t_1 = (a_{1,1}, 1, c^2, 1, c^2, ..., 1, c^2, c^{-1}, c)$
\item $t_2 = (c^2, a_{2,2}, c^2, 1, c^2, ..., 1, c^2, c^{-1}, c)$
\item $t_3 = (c^2, 1, a_{3,3}, 1, c^2, ..., 1, c^2, c^{-1}, c)$
\item $t_4 = (c^2, 1, c^2, a_{4,4}, c^2, 1, c^2, ..., 1, c^2, c^{-1}, c)$
\item ...
\item $t_{n-3} = (c^2, 1, c^2, ..., 1, c^2, a_{n-3,n-3}, c^2, c^{-1}, c) $
\item $t_{n-2} = (c^2, 1, c^2, ...,1, c^2, 1, a_{n-2,n-2}, c^{-1}, c)$
\item $t_{n-1} = (c^2, 1, c^2, ..., 1, c^2, 1, c^2, a_{n-1,n-1}, c)$
\item $t_n = (c^2, 1, c^2, ..., 1, c^2, c^{-1}, a_{n,n})$
\end{itemize}
such that $c^4 = 1$.

We may then conclude

\begin{proposition} $ $
\begin{enumerate}
\item If $n$ is even or odd, then the Tits section is optimal in the case of simply connected $\D_n$.  Moreover, one can compute that for any section $\mathcal{S}$, we have $\mathcal{S}(s_i)^2$ has order $4$ for all $i$.  Thus, there is only one order profile of sections, represented by the Tits section.
\item In the case that $n$ is even, two sections are $T$-conjugate if and only if they have the same $a$ and $b$.  In the case that $n$ is odd, two sections are $T$-conjugate if and only if they have the same $c$.
\end{enumerate}
\end{proposition}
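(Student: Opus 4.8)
The plan is to prove both parts by direct computation with the explicit parametrizations of the $t_i$ just established (treating $n$ even and $n$ odd separately), exploiting the general identity $(t_\alpha\sigma_\alpha)^2 = t_\alpha\, s_\alpha(t_\alpha)\,\alpha^\vee(-1)$, which follows from $\sigma_\alpha t\sigma_\alpha^{-1}=s_\alpha(t)$ and $\sigma_\alpha^2=\alpha^\vee(-1)$ (Lemma \ref{realization}).

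For part (1) I would first record the action of $s_i$ on the simple-coroot basis: writing $\lambda_j=\alpha_j^\vee$, the reflection formula gives $\lambda_j - s_i(\lambda_j) = \langle\alpha_i,\lambda_j\rangle\,\lambda_i$, so $s_i(\lambda_i)=-\lambda_i$, while $s_i$ fixes $\lambda_j$ for $j$ non-adjacent to $i$ and adds a multiple of $\lambda_i$ for $j$ adjacent. Then for each $i$ I compute $z_i := (t_i\sigma_i)^2 = t_i\,s_i(t_i)\,\lambda_i(-1)$ coordinate by coordinate. The crucial observation is that the free diagonal parameter $a_{i,i}$ (the $i$-th coordinate of $t_i$) cancels in the $i$-th coordinate of $t_i\,s_i(t_i)$, which becomes $-\prod_{j\sim i}(\text{$j$-th coordinate of }t_i)$, while every coordinate $k\neq i$ becomes the square of a rigid off-diagonal entry. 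Hence $z_i$ depends only on the rigid entries, which are built from $\pm1$ and $a,b$ (for $n$ even) or $c$ (for $n$ odd). Using the constraints $a^2=b^2=1$, resp. $c^4=1$, I verify that $z_i$ is a nontrivial element of $T$ of order exactly $2$: its $i$-th coordinate is forced to equal $-1$ independently of the parameters, while every coordinate has order dividing $2$. This shows $\mathcal{S}(s_i)$ has order $4$ for every $i$ and every admissible section. Consequently all sections share the single order profile with every node labelled $4$; this profile is trivially the unique maximal one, and it is realized by the Tits section (since $\sigma_i^2=\lambda_i(-1)\neq1$ and $\sigma_i^4=1$), which is therefore optimal.

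For part (2) I would use $t\,(t_\alpha\sigma_\alpha)\,t^{-1} = t_\alpha\,\big(t\,s_\alpha(t^{-1})\big)\,\sigma_\alpha$. Writing $t=\prod_j\lambda_j(v_j)$ and again using $\lambda_j - s_i(\lambda_j)=\langle\alpha_i,\lambda_j\rangle\lambda_i$ yields $t\,s_i(t^{-1}) = \lambda_i\!\big(\prod_j v_j^{\langle\alpha_i,\lambda_j\rangle}\big)$, a pure power of $\lambda_i=\alpha_i^\vee$. Thus conjugation by $t$ multiplies the $i$-th coordinate of $t_i$ by $v_i^2\prod_{j\sim i}v_j^{-1}$ and leaves every other coordinate of every $t_i$ unchanged; in particular the off-diagonal entries carrying $a,b$ (resp. $c$) are fixed, so these are genuine $T$-conjugacy invariants. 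Conversely, if two sections share the same $a,b$ (resp. $c$), conjugating one to the other requires solving $v_i^2\prod_{j\sim i}v_j^{-1} = a'_{i,i}/a_{i,i}$ for all $i$, which is the system given by the nonsingular Cartan matrix $A$ acting on $(F^\times)^n$. Since $F$ is algebraically closed, $F^\times$ is divisible, hence an injective $\mathbb{Z}$-module, so $\mathrm{Ext}^1(\mathbb{Z}^n/A\mathbb{Z}^n,F^\times)=0$ and the Cartan map on $(F^\times)^n$ is surjective; the required $v_j$ therefore exist and the two sections are $T$-conjugate.

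The main obstacle is the bookkeeping in part (1): one must check, uniformly in $i$ and separately for $n$ even and $n$ odd, that the rigid off-diagonal entries conspire so that $z_i$ is never trivial yet always of order $2$. The delicate case is the fork node $i=n-2$ in the $n$-odd situation, where the order-$4$ entries $c^{\pm1}$ sit in the two adjacent coordinates $n-1,n$; here one must observe that they enter the $(n-2)$-th coordinate of $z_{n-2}$ as the product $c^{-1}\cdot c=1$, so that no $c^{\pm1}$ survives to force order $8$. Once this conspiracy is confirmed, the remaining nodes are routine, and the conjugacy surjectivity step of part (2) is clean given the divisibility of $F^\times$.
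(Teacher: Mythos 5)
Your approach is the same as the paper's: the paper offers no argument for this proposition beyond ``one may conclude,'' the intended proof being exactly the direct computation you describe --- evaluate $(t_i\sigma_i)^2 = t_i\,s_i(t_i)\,\alpha_i^\vee(-1)$ on the explicit parametrization, and compute the conjugation action $t\,(t_i\sigma_i)\,t^{-1} = t_i\bigl(t\,s_i(t^{-1})\bigr)\sigma_i$ coordinate by coordinate. Your part (2) is complete: the observation that $t\,s_i(t^{-1})$ is a pure power of $\alpha_i^\vee$, so conjugation moves only the diagonal parameters $a_{i,i}$ while the coordinates carrying $a,b$ (resp.\ $c$) are genuine invariants, combined with surjectivity of the Cartan-matrix map on $(F^\times)^n$ via divisibility of $F^\times$, is exactly what is needed (the paper leaves the surjectivity step implicit).

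However, one intermediate claim in part (1) is false as stated, even though the conclusion survives. For $n$ odd it is \emph{not} true that the $i$-th coordinate of $z_i=(t_i\sigma_i)^2$ equals $-1$ for every $i$: at the two tail nodes $i=n-1$ and $i=n$, the unique neighbor is the fork node $n-2$, and the $(n-2)$-th coordinate of $t_{n-1}$ (resp.\ of $t_n$) is $c^2$, not $1$; hence the $i$-th coordinate of $z_i$ is $-c^2$, which equals $+1$ when $c$ is a primitive fourth root of unity (allowed, since the constraint is only $c^4=1$). In that case nontriviality of $z_{n-1}$ (resp.\ $z_n$) is carried instead by the $n$-th (resp.\ $(n-1)$-th) coordinate, which is $(c^{\pm1})^2 = c^2 = -1$; so $z_i\neq 1$ and $z_i^2=1$ still hold for all $i$, and every lift has order exactly $4$. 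In other words, the genuinely delicate case is not the fork node $n-2$ --- there the neighbors contribute $1\cdot c^{-1}\cdot c = 1$ and your claimed $-1$ is correct --- but the nodes $n-1$ and $n$, where your uniform claim fails and nontriviality must be read off from a different coordinate. With that correction the argument goes through and matches the paper's conclusion.
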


\subsubsection{Middle isogeny with fundamental coweight $\omega_1$}
We consider the isogeny given by $X_*(T) = \langle Q^{\vee}, \omega_1 \rangle$ where $Q^{\vee}$ is the coroot lattice.  Then, a basis for this cocharacter lattice is given by
$$\lambda_1 = \alpha_1 = \alpha_1^{\vee}, \lambda_2 = \alpha_2^{\vee}, ..., \lambda_{n-1} = \alpha_{n-1}^{\vee}, \lambda_n = \omega_1$$
Indeed (and we will need this calculation soon), $$\alpha_n^{\vee} = \alpha_n = \lambda_{n-1} + 2 \lambda_n - 2(\lambda_1 + \lambda_2 + ... + \lambda_{n-1}).$$
We record the following actions: $s_i(\lambda_j) = \lambda_j \ \forall i,j$ except in the following cases:

$s_i(\lambda_i) = -\lambda_i \ \forall i = 1,2, ..., n-1$

$s_1(\lambda_n) = \lambda_n - \lambda_1$

$s_i(\lambda_{i+1}) = \lambda_i + \lambda_{i+1}$ for $i = 1, 2, ..., n-2$

$s_{i+1}(\lambda_i) = \lambda_i + \lambda_{i+1}$ for $i = 1, 2, ..., n-2$

$s_n(\lambda_{n-2}) = -2(\lambda_1 + \lambda_2 + .... + \lambda_{n-3}) - \lambda_{n-2} - \lambda_{n-1} + 2 \lambda_n$.

Let $n$ be even.  Then after a long set of computations, we obtain
\begin{itemize}
\item $t_1 = (a_{1,1}, 1, x, 1, x, ..., 1, x, 1, x, y)$
\item $t_2 = (xy^{-1}, a_{2,2}, x, 1, x, ..., 1, x, 1, x, y)$
\item $t_3 = (xy^{-1}, y^{-1}, a_{3,3}, 1, x, 1, ..., x, 1, x, 1, x, y)$
\item $t_4 = (xy^{-1}, y^{-1}, xy^{-1}, a_{4,4}, x, 1, x, .., 1, x, 1, x, y)$
\item ...
\item $t_{n-2} = (xy^{-1}, y^{-1}, xy^{-1}, ..., y^{-1}, xy^{-1}, a_{n-2, n-2}, x, y)$
\item $t_{n-1} = (xy^{-1}, y^{-1}, xy^{-1}, ..., y^{-1}, xy^{-1}, y^{-1}, a_{n-1, n-1}, y)$
\item $t_n = (xy^{-1} z^2, y^{-1} z^2, xy^{-1} z^2, ..., y^{-1} z^2, xy^{-1} z^2, y^{-1} z^2, xy z^2, y z^2, xz, y z^{-2})$
\end{itemize}
such that $x^2 = y^2 = 1$, with no condition on $z$

Let $n$ be odd.  Then after a long set of computations, we get

\begin{itemize}
\item $t_1 = (a_{1,1}, x, 1, x, ..., 1, x, 1, x, y)$
\item $t_2 = (xy^{-1}, a_{2,2}, 1, x, ..., 1, x, 1, x, y)$
\item $t_3 = (xy^{-1}, y^{-1}, a_{3,3}, x, 1, ..., x, 1, x, 1, x, y)$
\item $t_4 = (xy^{-1}, y^{-1}, xy^{-1}, a_{4,4}, 1, x, .., 1, x, 1, x, y)$
\item ...
\item $t_{n-2} = (xy^{-1}, y^{-1}, xy^{-1}, ..., y^{-1}, a_{n-2, n-2}, x, y)$
\item $t_{n-1} = (xy^{-1}, y^{-1}, xy^{-1}, ..., y^{-1}, xy^{-1}, a_{n-1, n-1}, y)$
\item $t_n = (xy^{-1} z^2, y^{-1} z^2, xy^{-1} z^2, ..., y^{-1} z^2, xy^{-1} z^2, y z^2, xy z^2, xz, y z^{-2})$
\end{itemize}
such that $x^2 = y^2 = 1$, with no condition on $z$.  One can conclude 

\begin{proposition} $ $
\begin{enumerate}
\item If $n$ is even, then any section given by $\mathcal{S}(s_{\alpha_i}) = t_i \sigma_i$ where $x^2 = 1$ and $y = -1$, is an optimal section (in fact a homomorphic section), whereas the Tits section is not.  In fact, $\mathcal{N}_{\circ}(s_i)^2 \neq 1 \ \forall i = 1, 2, ..., n.$  

If $n$ is odd, then any section given by $\mathcal{S}(s_{\alpha_i}) = t_i \sigma_i$ where $x^2 = y^2 = 1$ and $xy = -1$ is an optimal section (in fact a homomorphic section), whereas the Tits section is not.  In fact, $\mathcal{N}_{\circ}(s_i)^2 \neq 1 \ \forall i = 1, 2, ..., n.$  

If $n$ is even or odd, there are two order profile of sections.
\item Two sections are $T$-conjugate if and only if they have the same values of $x$ and $y$.
\end{enumerate}
\end{proposition}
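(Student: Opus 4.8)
The plan is to prove both parts by direct computation in the coordinates $(x_1,\dots,x_n)$ for $X_*(T)$ fixed above, using the explicit description of the $t_i$ together with the tabulated action of the $s_i$ on the $\lambda_j$. For part (1), since each $t_i\sigma_i$ maps to the reflection $s_i$ of order $2$ in $W$, its order is even and the minimal possible value is $2$, attained exactly when $(t_i\sigma_i)^2=1$. I would compute, for every $i$,
\[
(t_i\sigma_i)^2 = t_i\,\sigma_i t_i \sigma_i^{-1}\,\sigma_i^2 = t_i\, s_i(t_i)\,\alpha_i^\vee(-1)\in T,
\]
using $\sigma_i^2=\alpha_i^\vee(-1)$ from Lemma~\ref{realization}(2). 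For $1\le i\le n-1$ we have $\alpha_i^\vee=\lambda_i$, so $\alpha_i^\vee(-1)$ is just $-1$ in the $i$-th coordinate and $s_i(t_i)$ is immediate from the action table; for $i=n$ one must substitute $\alpha_n^\vee=\lambda_{n-1}+2\lambda_n-2(\lambda_1+\dots+\lambda_{n-1})$ to write $\alpha_n^\vee(-1)$ in coordinates (it is $-1$ in the $(n-1)$-st slot and $1$ elsewhere) and to evaluate $s_n(t_n)$, whose $\lambda_{n-2}$-term is the complicated one. The key qualitative feature to extract is that every $(t_i\sigma_i)^2$ is independent of the free parameters $a_{i,i}$ and $z$ and depends only on $x,y$: each square is trivial precisely when $y=-1$ (for $n$ even) or $xy=-1$ (for $n$ odd).

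Granting this, part (1) follows quickly. Since $x,y\in\{\pm1\}$, the order profile is a function of the discrete datum $(x,y)$ alone, and the computation shows it takes exactly two values---one in which all lifts have order $2$ (the homomorphic profile, at $y=-1$ for $n$ even and $xy=-1$ for $n$ odd) and one in which they do not---so there are exactly two order profiles and the homomorphic one is the unique maximum. A section with all $(t_i\sigma_i)^2=1$ is a genuine homomorphism $W\to N$, since $W$ is presented by the braid relations together with $s_i^2=1$, which are exactly the relations imposed. Finally, the Tits section is the case $x=y=z=1$, $a_{i,i}=1$, for which $\mathcal{N}_\circ(s_i)^2=\sigma_i^2=\alpha_i^\vee(-1)$ is nontrivial for every $i$ (it is $-1$ in coordinate $i$ for $i<n$ and in coordinate $n-1$ for $i=n$); hence the Tits section lies in the non-maximal profile and $\mathcal{N}_\circ(s_i)^2\neq 1$ for all $i$.

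For part (2) I would use that $T$ acts on sections by $t\cdot(t_i\sigma_i)\cdot t^{-1}=t_i\, t\, s_i(t^{-1})\,\sigma_i$, so conjugation by $t=(v_1,\dots,v_n)$ replaces $t_i$ by $t_i\cdot t s_i(t^{-1})$. First I would show $x,y$ are conjugation-invariant: reading off the $x$- and $y$-entries of $t_1$, the action table shows that the corresponding coordinates of $t s_1(t^{-1})$ are trivial (as $s_1$ fixes the relevant $\lambda_j$ and contributes no $\lambda_n$-term there), so these entries of $t_1$ are untouched, whence $T$-conjugate sections share $x$ and $y$. Conversely, since $T$-conjugation preserves the set of sections and fixes $x,y$, it can only move the free parameters $a_{1,1},\dots,a_{n-1,n-1},z$, of which there are exactly $n$, matching $\dim T$. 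The induced action on these is multiplication by monomials in the $v_j$ with integer exponent matrix $M$ (the entry $a_{i,i}$ of $t_i$ is scaled by a $\D_n$-Cartan-type combination such as $v_i^2 v_{i-1}^{-1}v_{i+1}^{-1}$ for interior $i$, with boundary modifications, and the $z$-entry of $t_n$ contributes the last row). I would check $\det M\neq 0$ and then deduce surjectivity from the divisibility of $F^\times$: Smith normal form reduces a nonsingular integer map on $(F^\times)^n$ to coordinatewise power maps $w\mapsto w^{d}$, each surjective over the algebraically closed field $F$. This gives transitivity on the fibers of fixed $(x,y)$, completing the classification.

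The main obstacle is the $i=n$ computation underlying both parts: evaluating $s_n(t_n)$ and $\alpha_n^\vee(-1)$ in the $\lambda$-basis requires carrying the many-term image $s_n(\lambda_{n-2})$ through the product, and one must verify that $a_{i,i}$ and especially $z$ cancel out of $(t_n\sigma_n)^2$ (for part (1)) while simultaneously confirming that $z$ appears non-degenerately in the last row of $M$ (for part (2), so that $\det M\neq 0$). Keeping the bookkeeping of these $\lambda_{n-2}$- and $\lambda_n$-contributions straight, and separating the even and odd $n$ patterns, is where the real care lies; the remaining $i<n$ computations are routine nearest-neighbour manipulations.
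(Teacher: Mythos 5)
Your proposal is correct and follows essentially the same route as the paper: part (1) by computing $(t_i\sigma_i)^2 = t_i\,s_i(t_i)\,\alpha_i^\vee(-1)$ in the $\lambda$-coordinates (with the $i=n$ case handled via $\alpha_n^\vee = \lambda_{n-1}+2\lambda_n-2(\lambda_1+\cdots+\lambda_{n-1})$), and part (2) by analyzing how conjugation, $t_i \mapsto t_i\, t\, s_i(t^{-1})$, moves only the free parameters $a_{i,i}$ and $z$ while fixing $x$ and $y$. Your added formalization of the transitivity step (nonsingular exponent matrix plus Smith normal form over the algebraically closed field, where indeed $\det M = \pm 2 \neq 0$) is a sound way to make explicit what the paper leaves implicit, but it is a refinement of the same argument rather than a different one.
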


\subsubsection{Middle isogeny with fundamental coweight $\omega_{n-1}$}

We consider the isogeny given by $X_*(T) = \langle Q^{\vee}, \omega_{n-1} \rangle$ where $Q^{\vee}$ is the coroot lattice.  As mentioned earlier, we may assume here that $n$ is even.  Then, a basis for this cocharacter lattice $X_*(T) = \langle Q^{\vee}, \omega_{n-1} \rangle$ is given by
$$\lambda_1 = \omega_{n-1}, \lambda_2 = \alpha_2^{\vee}, ..., \lambda_{n-1} = \alpha_{n-1}^{\vee}, \lambda_n = \alpha_n^{\vee}$$

Indeed (and we will need this calculation soon), $$\alpha_1^{\vee} = \alpha_1 = 2 \lambda_1 - 2 \lambda_2 - 3 \lambda_3 - 4 \lambda_4 - ... - (n-1) \lambda_{n-1} + \frac{n-2}{2}(\lambda_{n-1} - \lambda_n).$$
We record the following actions: $s_i(\lambda_j) = \lambda_j \ \forall i,j$ except in the following cases:

$s_i(\lambda_i) = -\lambda_i \ \forall i = 2, 3, ..., n$.

$s_i(\lambda_{i+1}) = \lambda_i + \lambda_{i+1}$ for $i = 2, 3, ..., n-2$

$s_{i+1}(\lambda_i) = \lambda_i + \lambda_{i+1}$ for $i = 2, 3, ..., n-2$

$s_1(\lambda_2) = 2 \lambda_1 - \lambda_2 - 3 \lambda_3 - 4 \lambda_4 - ... - (n-1) \lambda_{n-1} + \frac{n-2}{2}(\lambda_{n-1} - \lambda_n)$.

$s_{n-2}(\lambda_n) = \lambda_{n-2} + \lambda_n$

$s_{n-1}(\lambda_1) = \lambda_1 - \lambda_{n-1}$

$s_n(\lambda_{n-2}) = \lambda_{n-2} + \lambda_n$

Recall that $n$ is even.  Then after a long set of computations, we get
 
\begin{itemize}
\item $t_1 = (a^{-1}, a, b, a^2, ba, a^3, ba^2, a^4, ba^3, a^5, ..., ba^{\frac{n-2}{2} - 2}, a^{\frac{n-2}{2}}, c x^{\frac{n}{2}}, c x^{\frac{n-2}{2}})$
\item $t_2 = (1, a_{2,2}, 1, 1, ..., 1, c, c)$
\item $t_3 = (1, 1, a_{3,3}, 1, 1, ..., 1, c, c)$
\item $t_4 = (1, 1, 1, a_{4,4}, 1, 1, ..., 1, c, c)$
\item $t_5 = (1, 1, 1, 1, a_{5,5}, 1, 1, ..., 1, c, c)$
\item ...
\item $t_{n-3} = (1, 1, ..., 1, a_{n-3, n-3}, 1, c, c)$
\item $t_{n-2} = (1, 1, 1, ..., 1, a_{n-2, n-2}, c, c)$
\item $t_{n-1} = (1, 1, ..., 1, 1, a_{n-1, n-1}, c)$
\item $t_n = (1, 1, ..., 1, c, a_{n, n})$
\end{itemize}
such that $c^2 = 1, a^3 = b^2$, and where $x = a^{-1} b$. One concludes

\begin{proposition} $ $
\begin{enumerate}
\item The Tits section is optimal in the case of the isogeny of type $\D_n$ corresponding to the fundamental coweight $\omega_{n-1}$, with $n$ even.  Moreover, $\mathcal{N}_{\circ}(s_i)^2 \neq 1 \ \forall i = 1, 2, ..., n.$ Finally, one can compute that for any section $\mathcal{S}$, we have $\mathcal{S}(s_i)$ has order $4$ for all $i$, so that there is only one order profile of sections.
\item Two sections are $T$-conjugate if and only if they have the same value of $c$. 
\end{enumerate}
\end{proposition}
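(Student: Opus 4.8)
The plan is to treat both assertions as explicit torus computations built on the parametrization of $t_1, \ldots, t_n$ recorded above (so that $c^2 = 1$, $a^3 = b^2$, $x = a^{-1}b$), the tabulated actions of the $s_i$ on the basis $\lambda_1 = \omega_{n-1}, \lambda_2 = \alpha_2^{\vee}, \ldots, \lambda_n = \alpha_n^{\vee}$, and the expansion of $\alpha_1^{\vee}$ in this basis.

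For part (1), the central object is $\mathcal{S}(s_i)^2 = (t_i\sigma_i)^2 = t_i\, s_i(t_i)\, \sigma_i^2 = t_i\, s_i(t_i)\, \alpha_i^{\vee}(-1)$, an element $\tau_i \in T$. Since $\mathcal{S}(s_i)$ maps to the involution $s_i \in W$, its square lies in $T$, and $\mathcal{S}(s_i)$ has order exactly $4$ precisely when $\tau_i \neq 1$ and $\tau_i^2 = 1$. I would first record $\tau_i^2 = \big(t_i\, s_i(t_i)\big)^2$ (the factor $\alpha_i^{\vee}(-1)$ squares away) and verify coordinate by coordinate, using $c^2 = 1$ together with $a^3 = b^2$, that $t_i\, s_i(t_i)$ is an involution for every $i$; the key point is that all dependence on the free parameter $a$ (equivalently $b, x$) cancels, so $\tau_i^2 = 1$ for every admissible choice. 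For the interior indices $2 \le i \le n-3$ the reflection fixes the $\lambda_{n-1}, \lambda_n$ coordinates and acts by $\lambda_i \mapsto -\lambda_i$, so $c^2 = 1$ forces $t_i\, s_i(t_i) = 1$ and hence $\tau_i = \alpha_i^{\vee}(-1) = \lambda_i(-1) \neq 1$ at once; only the boundary indices $i \in \{1, n-2, n-1, n\}$, where $s_i$ moves $\lambda_1, \lambda_{n-1}$ or $\lambda_n$, require the more careful cancellation. In every case one exhibits a single coordinate of $\tau_i$ forced to be $-1$, so $\tau_i \neq 1$ unconditionally; hence every $\mathcal{S}(s_i)$ has order $4$.

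It follows immediately that there is only one order profile, the constant label $4$ on every node, so the induced partial order on order profiles is a single point and is therefore its own maximum; thus every section, in particular the Tits section, is optimal. To confirm the separate claim $\mathcal{N}_{\circ}(s_i)^2 \neq 1$, note $\mathcal{N}_{\circ}(s_i)^2 = \sigma_i^2 = \alpha_i^{\vee}(-1)$, which is nontrivial exactly when $\alpha_i^{\vee} \notin 2\,X_*(T)$; for $i \ge 2$ the coroot $\alpha_i^{\vee} = \lambda_i$ is a basis vector, and the recorded expansion of $\alpha_1^{\vee}$ has an odd coefficient (e.g. the $\lambda_3$-coefficient $-3$), so $\alpha_1^{\vee} \notin 2\,X_*(T)$ as well. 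For part (2), I would use $t\,\mathcal{S}(s_i)\,t^{-1} = t_i\,\big(t\, s_i(t^{-1})\big)\sigma_i$, so conjugation by $t = (v_1, \ldots, v_n)$ replaces $t_i$ by $t_i\,\big(t\, s_i(t^{-1})\big)$. Reading the correction factors $t\,s_i(t^{-1})$ off the tabulated $s_i$-actions shows they let one rescale the diagonal freedoms $a_{i,i}$ and the shape parameters $a, b$ freely, while the value of $c$ is preserved: there is a character of $T$ recovering $c$ from each $t_i$ that kills every correction factor $t\, s_i(t^{-1})$, so $c$ is a conjugacy invariant. Conversely, given two sections with equal $c$, I would solve the system $t_i' = t_i\,\big(t\, s_i(t^{-1})\big)$ recursively for the $v_j$ (using the $i = 2, \ldots, n$ equations, which are essentially triangular in the $v_j$) and then check the $i = 1$ equation is automatically consistent, producing the conjugating $t$.

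The main obstacle is the $i = 1$ node together with the endpoints $i = n-2, n-1, n$: there the basis mixes the fundamental coweight $\omega_{n-1}$ with the coroots, and the action $s_1(\lambda_2)$ and the expansion of $\alpha_1^{\vee}$ are long, so both the cancellation of the $a$-dependence yielding $\tau_i^2 = 1$ and the consistency of the $i = 1$ equation in the conjugacy system must be checked by hand rather than read off a uniform pattern. Everything else follows the template of the preceding $\D_n$ and adjoint-type computations.
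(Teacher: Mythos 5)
Your proposal is correct and follows what is essentially the paper's own (largely implicit) route: the paper states this proposition immediately after the parametrization of the $t_i$ with ``one concludes,'' and the content of the proof is exactly the explicit torus computations you describe --- $(t_i\sigma_i)^2 = t_i\,s_i(t_i)\,\alpha_i^{\vee}(-1) = \lambda_i(-1)$ for $i \geq 2$ via $c^2=1$, the $i=1$ case via $s_1(t_1) = t_1\,\alpha_1^{\vee}(a)$ and the cancellation $b^2a^{-3}=1$, and the conjugacy analysis via the correction factors $t\,s_i(t^{-1}) = \alpha_i^{\vee}(\alpha_i(t))$.

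Two phrasings in your conjugacy argument should be tightened, though neither is a fatal gap. First, no single character of $T$ kills \emph{every} correction factor (any such character would be orthogonal to all coroots, hence zero, since $G$ is semisimple); what you actually need, and what is true, is that for each fixed $i \geq 2$ the dual-basis character $\lambda_n^*$ (or $\lambda_{n-1}^*$ when $i=n$) reads off $c$ from $t_i$ and annihilates the correction factor $\lambda_i(\alpha_i(t))$ \emph{for that same $i$}, which already proves invariance of $c$. Second, the $i=1$ equation is not ``automatically consistent'' after solving the $i = 2,\ldots,n$ system: it forces the specific value $v_2 = ab'/(a'b)$ (solvable compatibly with $v_2^2 = a'/a$ and $v_2^3 = b'/b$ precisely because $a^3=b^2$ and $a'^3=b'^2$), and it is solvable at all only when $c'=c$ --- this is exactly where the hypothesis enters. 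The clean ordering is to determine $v_2$ from the $i=1$ matching first, then run your triangular recursion for $v_3,\ldots,v_{n-2}$, $v_n$ (a square root, available over the algebraically closed field), $v_{n-1}$, and finally $v_1$; since $v_2$ is a free parameter of that recursion, your ordering also works once ``automatically'' is replaced by ``by choice of $v_2$, using $c'=c$.''
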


\subsubsection{Middle isogeny with fundamental coweight $\omega_n$}

We consider the isogeny given by $X_*(T) = \langle Q^{\vee}, \omega_n \rangle$ where $Q^{\vee}$ is the coroot lattice.  As mentioned earlier, we may assume here that $n$ is even. Then, a basis for this cocharacter lattice $X_*(T) = \langle Q^{\vee}, \omega_n \rangle$ is given by
$$\lambda_1 = \omega_n, \lambda_2 = \alpha_2^{\vee}, ..., \lambda_{n-1} = \alpha_{n-1}^{\vee}, \lambda_n = \alpha_n^{\vee}$$
Indeed (and we will need this calculation soon), $$\alpha_1^{\vee} = \alpha_1 = 2 \lambda_1 - 2 \lambda_2 - 3 \lambda_3 - 4 \lambda_4 - ... - (n-1) \lambda_{n-1} + \frac{n}{2}(\lambda_{n-1} - \lambda_n).$$

We record the following actions: $s_i(\lambda_j) = \lambda_j \ \forall i,j$ except in the following cases:

$s_i(\lambda_i) = -\lambda_i \ \forall i = 2, 3, ..., n$.

$s_i(\lambda_{i+1}) = \lambda_i + \lambda_{i+1}$ for $i = 2, 3, ..., n-2$

$s_{i+1}(\lambda_i) = \lambda_i + \lambda_{i+1}$ for $i = 2, 3, ..., n-2$

$s_1(\lambda_2) = \alpha_1 + \alpha_2 = 2 \lambda_1 - \lambda_2 - 3 \lambda_3 - 4 \lambda_4 - ... - (n-1) \lambda_{n-1} + \frac{n}{2} (\lambda_{n-1} - \lambda_n)$.

$s_{n-2}(\lambda_n) = \lambda_{n-2} + \lambda_n$

$s_{n}(\lambda_1) = \lambda_1 - \lambda_{n}$

$s_n(\lambda_{n-2}) = \lambda_{n-2} + \lambda_n$

 Recall that $n$ is even.  Then after a long set of computations, the braid relations give

\begin{itemize}
\item $t_1 = (a^{-1}, a, b, a^2, ba, a^3, ba^2, a^4, ba^3, a^5, ..., ba^{\frac{n-2}{2} - 2}, a^{\frac{n-2}{2}}, c x^{\frac{n-2}{2}}, c x^{\frac{n}{2}})$
\item $t_2 = (1, a_{2,2}, 1, 1, ..., 1, c, c)$
\item $t_3 = (1, 1, a_{3,3}, 1, 1, ..., 1, c, c)$
\item $t_4 = (1, 1, 1, a_{4,4}, 1, 1, ..., 1, c, c)$
\item ...
\item $t_{n-3} = (1, 1, ..., 1, a_{n-3, n-3}, 1, c, c)$
\item $t_{n-2} = (1, 1, 1, ..., 1, a_{n-2, n-2}, c, c)$
\item $t_{n-1} = (1, 1, ..., 1, 1, a_{n-1, n-1}, c)$
\item $t_n = (1, 1, ..., 1, c, a_{n, n})$
\end{itemize}
such that $c^2 = 1, a^3 = b^2$, and where $x = a^{-1} b$. One concludes that 

\begin{proposition} $ $
\begin{enumerate}
\item The Tits section is optimal.  In fact, $\mathcal{N}_{\circ}(s_i)^2 \neq 1 \ \forall i.$ Moreover, one can compute that for any section $\mathcal{S}$ satisfying the braid relations, we have $\mathcal{S}(s_i)$ has order $4$ for all $i$, so there is only one order profile of sections.
\item Two sections are $T$-conjugate if and only if they have the same value of $c$. 
\end{enumerate}
\end{proposition}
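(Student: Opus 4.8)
The plan is to deduce both parts directly from the explicit parametrization of the sections displayed above, together with the identity $(t_i\sigma_i)^2 = t_i\,\sigma_i t_i \sigma_i^{-1}\,\sigma_i^2 = t_i\,s_i(t_i)\,\alpha_i^{\vee}(-1)$, where I have used that conjugation by $\sigma_i$ acts on $T$ as $s_i$ and that $\sigma_i^2 = \alpha_i^{\vee}(-1)$ by Lemma \ref{realization}(2). Everything reduces to computing this square in the basis $\lambda_1,\dots,\lambda_n$ and analyzing the conjugation action on the parameters $a,b,c,a_{2,2},\dots,a_{n,n}$.

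For part (1), I would first record the general principle that any lift $t_i\sigma_i$ of a simple reflection has order $2\cdot\mathrm{o}((t_i\sigma_i)^2)$ whenever $(t_i\sigma_i)^2\neq 1$: indeed $(t_i\sigma_i)^{2m} = ((t_i\sigma_i)^2)^m$ lies in $T$, while every odd power maps to $s_i\neq 1$ in $W$ and so lies outside $T$. Thus it suffices to show that $(t_i\sigma_i)^2$ has order exactly $2$ for every $i$ and every admissible choice of parameters. I would substitute the coordinates of each $t_i$ from the displayed formulas, apply the recorded action of $s_i$ on $\lambda_1,\dots,\lambda_n$, and express $\alpha_i^{\vee}(-1)$ in the $\lambda$-basis (as was done in the preceding subsections). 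I expect the result to be, in each case, a tuple all of whose entries are $\pm 1$ and at least one of which is $-1$; this shows at once that $(t_i\sigma_i)^2\neq 1$ while $(t_i\sigma_i)^4=1$, so that every $t_i\sigma_i$ has order $4$. Since this holds for all admissible parameters, there is a single order profile, which is in particular realized by the Tits section $t_i=1$; hence the Tits section is optimal and $\mathcal{N}_{\circ}(s_i)^2=\alpha_i^{\vee}(-1)\neq 1$ for all $i$.

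For part (2), I would use that conjugation by $t=(v_1,\dots,v_n)$ sends $\mathcal{S}(s_i)=t_i\sigma_i$ to $t_i\cdot t\,s_i(t^{-1})\,\sigma_i$, so that it replaces the tuple $t_i$ by $t_i\cdot t\,s_i(t^{-1})$. The heart of the argument is to isolate $c$ as the unique conjugation invariant. First I would show $c$ is invariant by verifying that the specific combination of coordinates of the $t_i$ that encodes $c$ (the component attached to the fundamental coweight $\omega_n$ distinguishing this isogeny) is left unchanged by every coboundary $t\,s_i(t^{-1})$, since the relevant action of each $s_i$ fixes that component. Conversely, I would exhibit explicit choices of $v_1,\dots,v_n$ producing arbitrary changes of the remaining parameters $a,b$ and $a_{2,2},\dots,a_{n,n}$ (subject only to $a^3=b^2$), thereby proving that any two sections sharing the same value of $c$ are $T$-conjugate. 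Together with $c^2=1$ this yields exactly two $T$-conjugacy classes inside the single order profile.

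The main obstacle will be the bookkeeping at the boundary indices $i\in\{1,n-2,n-1,n\}$, where $s_i$ acts on the basis through the long linear combinations recorded for this isogeny rather than the three-term formulas valid for interior indices, and where the coordinates of $t_1$ and $t_n$ carry the intertwined $a$-, $b$-, and $c$-dependence. Verifying both that $(t_i\sigma_i)^2$ has order $2$ at these indices and that the $c$-component genuinely survives every coboundary requires careful tracking of how these formulas interact; once the boundary cases are settled, the interior indices follow the same pattern and present no further difficulty.
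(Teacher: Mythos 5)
Your proposal is correct and is essentially the paper's own argument: the proposition is stated as the direct outcome of computing $(t_i\sigma_i)^2 = t_i\,s_i(t_i)\,\alpha_i^{\vee}(-1)$ in the displayed parametrization (one finds $t_i\,s_i(t_i)=1$ and hence $(t_i\sigma_i)^2=\lambda_i(-1)$ for $i\geq 2$, while for $i=1$ the relations $a^3=b^2$ and $c^2=1$ force all coordinates of the square to be $\pm 1$ with the $\lambda_3$-entry equal to $-1$), together with computing the coboundaries $t\,s_i(t^{-1})=\alpha_i^{\vee}(\alpha_i(t))$, which move $a$, $b$ and the $a_{i,i}$ arbitrarily while fixing $c$. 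One small correction to your sketch: $c$ is not the component attached to $\omega_n=\lambda_1$ (that coordinate of $t_1$ equals $a^{-1}$ and is \emph{not} conjugation-invariant, since $\alpha_1^{\vee}$ has a nonzero $\lambda_1$-coefficient); the invariant encoding $c$ is the $\lambda_n$-coordinate of $t_{n-1}$ (equivalently the $\lambda_{n-1}$-coordinate of $t_n$), which survives because the coboundary attached to $s_{n-1}$ (resp.\ $s_n$) lies in the one-parameter subgroup $\alpha_{n-1}^{\vee}(F^{\times})=\lambda_{n-1}(F^{\times})$ (resp.\ $\lambda_n(F^{\times})$).
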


\subsubsection{Adjoint type}

We fix the standard set of fundamental coweights $\omega_1, \omega_2, ..., \omega_n$ from \cite{Bou02}.  We record the following actions: $s_i(\omega_j) = \omega_j \ \forall i \neq j, s_1(\omega_1) = \omega_2 - \omega_1, s_i(\omega_i) = \omega_{i-1} + \omega_{i+1} - \omega_i \ \forall i = 2, 3, ..., n-3, s_{n-2}(\omega_{n-2}) = \omega_{n-3} + \omega_{n-1} + \omega_n - \omega_{n-2}, s_{n-1}(\omega_{n-1}) = \omega_{n-2}  - \omega_{n-1}, s_n(\omega_n) = \omega_{n-2} - \omega_n$.

Altogether, we get 
\begin{itemize}
\item $t_1 = (a_{1,1}, a_{1,2}, 1, 1, ..., 1)$
\item $t_2 = (a_{2,2}^{-1} a_{2,3}^{-1}, a_{2,2}, a_{2,3}, 1, 1, ..., 1)$
\item $t_3 = (1, a_{3,3}^{-1} a_{3,4}^{-1}, a_{3,3}, a_{3,4}, 1, 1, ..., 1)$
\item $t_4 = (1, 1, a_{4,4}^{-1} a_{4,5}^{-1}, a_{4,4}, a_{4,5}, 1, 1, ..., 1)$
\item ...
\item $t_{n-3} = (1, 1, ..., 1, a_{n-3, n-3}^{-1} a_{n-3, n-2}^{-1}, a_{n-3, n-3}, a_{n-3, n-2}, 1, 1)$
\item $t_{n-2} = (1, 1, ..., 1, a_{n-2, n-2}^{-1} a_{n-2, n-1}^{-1}, a_{n-2, n-2}, a_{n-2, n-1}, a_{n-2, n-1})$
\item $t_{n-1} = (1, 1, ..., 1, a_{n-1, n-2}, a_{n-1, n-1}, a_{n-2, n-2} a_{n-2, n-1}^2)$
\item $t_n =  (1, 1, ..., 1, a_{n, n-2}, a_{n-2, n-2} a_{n-2, n-1}^2, a_{n,n})$
\end{itemize}
together with the conditions that
\begin{itemize}
\item $a_{1,1} a_{1,2}^2 = a_{2,2} a_{2,3}^2 = a_{3,3} a_{3,4}^2 = ... = a_{n-2, n-2} a_{n-2, n-1}^2 = a_{n-1, n-1}^{-1} a_{n-1, n-2}^{-2} = a_{n, n}^{-1} a_{n, n-2}^{-2}$
\item $(a_{n-2, n-2} a_{n-2, n-1}^2)^2 = 1$
\end{itemize}

We conclude that

\begin{proposition} $ $
\begin{enumerate}
\item Any section given by $\mathcal{S}(s_{\alpha_i}) = t_i \sigma_i$ where $$a_{1,1} a_{1,2}^2 = a_{2,2} a_{2,3}^2 = a_{3,3} a_{3,4}^2 = ... = a_{n-2, n-2} a_{n-2, n-1}^2 = a_{n-1, n-1} a_{n-1, n-2}^2 = a_{n,n} a_{n,n-2}^2 = -1,$$ is an optimal section (in fact a homomorphic section), whereas the Tits section is not.  Moreover, $\mathcal{N}_{\circ}(s_i)^2 \neq 1 \ \forall i = 1, 2, ..., n.$  One computes that there are two order profiles of sections; the one represented by the optimal section, and the one represented by the Tits section.
\item Two sections are $T$-conjugate if and only if they have the same value of $a_{1,1} a_{1,2}^2$. In particular, two sections are $T$-conjugate if and only if they are have the same order profile.
\end{enumerate}
\end{proposition}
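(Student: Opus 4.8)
The plan is to treat the two parts in turn, reducing both to the explicit coweight-coordinate computations already set up, and in each case isolating the role of the three fork nodes $s_{n-2}, s_{n-1}, s_n$, which carry the only genuinely new information beyond the adjoint type $\A_{n}$ analysis. Throughout I set $\kappa := a_{1,1}a_{1,2}^2$, the common value appearing in the listed chain of equalities.

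For part (1), I would compute the order of each $\mathcal{S}(s_i) = t_i\sigma_i$ by expanding $(t_i\sigma_i)^2 = t_i\,\sigma_i t_i \sigma_i^{-1}\,\sigma_i^2 = t_i\, s_i(t_i)\,\alpha_i^{\vee}(-1)$, exactly as in the adjoint $\A_n$ computation. The one ingredient to assemble is $\alpha_i^{\vee}(-1)$ in the coweight basis: since $X_*(T)$ is the coweight lattice, $\alpha_i^{\vee}$ is the $i$-th column of the Cartan matrix, so $\alpha_i^{\vee}(-1)$ has a $-1$ in each coordinate where that column is odd and a $1$ elsewhere. Using the listed actions of $s_i$ on the $\omega_j$, I expect each $(t_i\sigma_i)^2$ to collapse to a tuple of the shape $(\dots, -a_{i,i}a_{i,i+1}^2, 1, -a_{i,i}^{-1}a_{i,i+1}^{-2}, \dots)$ for $1 \le i \le n-3$, with analogous fork expressions at $i = n-2, n-1, n$ built from $a_{n-1,n-1}a_{n-1,n-2}^2$ and $a_{n,n}a_{n,n-2}^2$. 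Hence $(t_i\sigma_i)^2 = 1$ precisely when the corresponding product equals $-1$. Invoking the already-established chain of equalities, all of these products coincide with $\kappa$, so every lift has order $2$ simultaneously iff $\kappa = -1$; and since the braid-relation analysis also forces $\kappa^2 = 1$, the only alternative is $\kappa = 1$, in which case each $(t_i\sigma_i)^2$ is a nontrivial element of order $2$ and every lift has order $4$. This yields exactly two order profiles.

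I would then record the immediate consequences. When $\kappa = -1$ the section is a genuine homomorphism $W \to N$ (all generators of order $2$), which is automatically optimal: since $s_i$ has order $2$, any lift has order at least $2$, so the all-$2$ profile is the maximum in the partial order and is unique. The Tits section has every $t_i$ trivial, hence $\kappa = 1$, so it realizes the order-$4$ profile and is not optimal; moreover $\mathcal{N}_{\circ}(s_i)^2 = \sigma_i^2 = \alpha_i^{\vee}(-1) \neq 1$ for all $i$, because the $i$-th column of the $\D_n$ Cartan matrix is never entirely even, i.e. $\alpha_i^{\vee} \notin 2 X_*(T)$. For part (2), I would compute the conjugation action $t_i \mapsto t_i\, t\, s_i(t^{-1})$ for $t = (v_1, \dots, v_n)$. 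A short cancellation using $s_i(\omega_i) = \omega_{i-1}+\omega_{i+1}-\omega_i$ (and the fork variants) shows that $t\, s_i(t^{-1})$ depends only on $v_i$: one gets $t\,s_1(t^{-1}) = (v_1^2, v_1^{-1},1,\dots)$, the interior shift $(\dots,v_i^{-1},v_i^2,v_i^{-1},\dots)$, and one-variable fork shifts at $i = n-2,n-1,n$. Consequently each free parameter $a_{i,i+1}$ (resp. $a_{n-1,n-2}, a_{n,n-2}$) is rescaled by $v_i^{-1}$ while the combination $a_{i,i}a_{i,i+1}^2 = \kappa$ is preserved. Thus $\kappa$ is a conjugation invariant, giving one direction; conversely, for fixed $\kappa$ the section is determined by $n$ free parameters, each independently rescaled by the corresponding $v_i^{-1}$, so choosing $v_i$ to match the ratios of the free parameters of $\mathcal{S}$ and $\mathcal{S}'$ conjugates one to the other. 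Hence two sections are $T$-conjugate iff they share the value $\kappa = a_{1,1}a_{1,2}^2$, and since $\kappa \in \{1,-1\}$ labels the two order profiles, $T$-conjugacy classes coincide with order profiles.

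The main obstacle is purely bookkeeping at the fork: the nonstandard actions $s_{n-2}(\omega_{n-2}) = \omega_{n-3}+\omega_{n-1}+\omega_n - \omega_{n-2}$, $s_{n-1}(\omega_{n-1}) = \omega_{n-2}-\omega_{n-1}$, and $s_n(\omega_n) = \omega_{n-2}-\omega_n$ must be fed carefully into both the $(t_i\sigma_i)^2$ and the $t\,s_i(t^{-1})$ computations, in order to confirm that the fork products are again governed by the single invariant $\kappa$ and that the fork shifts remain one-variable. The hypothesis $n \ge 6$ is what guarantees that the interior pattern and the fork neither overlap nor degenerate.
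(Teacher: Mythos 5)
Your proposal is correct and follows essentially the same route as the paper: computing $(t_i\sigma_i)^2 = t_i\,s_i(t_i)\,\alpha_i^{\vee}(-1)$ in the fundamental coweight basis (with $\alpha_i^\vee(-1)$ read off from the parity of Cartan matrix entries), using the braid-relation constraint $(a_{n-2,n-2}a_{n-2,n-1}^2)^2=1$ to reduce to the dichotomy $\kappa=\pm1$, and classifying $T$-conjugacy via $t_\alpha\, t\, s_\alpha(t^{-1})$, which rescales the $n$ free parameters independently while preserving $\kappa$. The only slips are cosmetic (for $i=1$ the square has a single nontrivial entry rather than the two-entry interior pattern, and the paper's chain actually equates the fork quantities to $\kappa^{-1}$, which is harmless since $\kappa^2=1$), so the argument stands as written.
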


\subsection{Type $\F_4$}\label{F_4}
We fix the standard set of roots $\alpha_1 = e_2 - e_3, \alpha_2 = e_3 - e_4, \alpha_3 = e_4, \alpha_4 = \frac{1}{2}(e_1 - e_2 - e_3 - e_4)$.  We recall that $m(\alpha_1, \alpha_2) = 3, m(\alpha_1, \alpha_3) = 2, m(\alpha_1, \alpha_4) = 2, m(\alpha_2, \alpha_3) = 4, m(\alpha_2, \alpha_4) = 2, m(\alpha_3, \alpha_4) = 3$.  Note that $G$ is simply connected.  Then the set of simple coroots form a basis for the cocharacter lattice.  As usual, we write an element of the maximal torus in terms of this basis.  For example, the tuple $(x_1, x_2, x_3, x_4)$ denotes the element $(e_2-e_3)^{\vee}(x_1) (e_3 - e_4)^{\vee}(x_2) e_4^{\vee}(x_3) (\frac{1}{2}(e_1 - e_2 - e_3 - e_4))^{\vee}(x_4)$.

A computation shows that the braid relations yields that 

\begin{itemize}
\item $t_1 = (a, 1, 1, 1)$
\item $t_2 = (1, b, 1, 1)$
\item $t_3 = (1, 1, c, x)$
\item $t_4 = (1, 1, x, d)$
\end{itemize}

such that $x^2 = 1$. It is straightforward to see that 

\begin{itemize}
\item $(t_1 \sigma_1)^2 = (-1, 1, 1, 1)$.
\item $(t_2 \sigma_2)^2 = (1, -1, 1, 1)$.
\item $(t_3 \sigma_3)^2 = (1, 1, -x, x^2)$.
\item $(t_4 \sigma_4)^2 = (1, 1, x^2, -x)$.
\end{itemize}

For example, 
\begin{align*}
& (t_3 \sigma_3)^2 = t_3 s_{\alpha_3}(t_3) \alpha_3^{\vee}(-1) \\
& = (1, 1, c, x) \cdot (1, 1, c^{-1} x, x) (1, 1, -1, 1) = (1, 1, -x, x^2).
\end{align*}
In particular, if we set $x = -1$, then we obtain that $(t_3 \sigma_3)^2 = (t_4 \sigma_4)^2 = 1$.  We have 

\begin{proposition} $ $
\begin{enumerate}
\item The optimal section in type $\F_4$ is given by  $\mathcal{S}(s_{\alpha_i}) = t_i \sigma_i$ where $x = -1$.  This section is not a homomorphism, and the Tits section is not optimal.  In fact, $\mathcal{S}(s_1)^2 \neq 1, \mathcal{S}(s_2)^2 \neq 1, \mathcal{S}(s_3)^2 = \mathcal{S}(s_4)^2 = 1$.  Moreover, $\mathcal{N}_{\circ}(s_i)^2 \neq 1 \ \forall i = 1,2, 3, 4.$  One sees that there are only two order profiles of sections.
\item Two sections are $T$-conjugate if and only if they have the same value of $x$.  In particular, two sections are $T$-conjugate if and only if they have the same order profile.
\end{enumerate}
\end{proposition}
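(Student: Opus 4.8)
The braid-relation analysis carried out above has already reduced every section to the five parameters $a,b,c,d \in F^\times$ and $x \in F^\times$ subject to $x^2=1$, and it records the four squares $(t_i\sigma_i)^2$. The plan is to read the orders off these squares, determine the resulting poset of order profiles, and then analyze the $T$-action. First I would observe that each $(t_i\sigma_i)^2$ is independent of $a,b,c,d$ and depends only on $x$: one has $(t_1\sigma_1)^2=\alpha_1^\vee(-1)$ and $(t_2\sigma_2)^2=\alpha_2^\vee(-1)$, both nontrivial of order two, so $\mathrm{o}(\mathcal{S}(s_1))=\mathrm{o}(\mathcal{S}(s_2))=4$ for every section, while $(t_3\sigma_3)^2=(1,1,-x,1)$ and $(t_4\sigma_4)^2=(1,1,1,-x)$ after applying $x^2=1$. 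Since $x^2=1$ forces $x\in\{1,-1\}$, this yields exactly two order profiles: for $x=1$ all four lifts have order $4$, whereas for $x=-1$ the lifts of $s_1,s_2$ have order $4$ and those of $s_3,s_4$ have order $2$.

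Next I would identify the optimal profile. In the ordering on profiles, where smaller orders mean ``more homomorphic,'' the profile $(4,4,2,2)$ attached to $x=-1$ has orders no larger than $(4,4,4,4)$ at every node and strictly smaller at the nodes of $s_3,s_4$; hence it is the unique maximal profile, and the corresponding section is optimal. The Tits section has $t_i=1$ for all $i$, hence $x=1$, so it realizes $(4,4,4,4)$ and is not optimal; moreover $\mathcal{N}_{\circ}(s_i)^2=\sigma_i^2=\alpha_i^\vee(-1)\neq 1$ for each $i$. Since any section satisfies $\mathcal{S}(s_1)^2=\alpha_1^\vee(-1)\neq 1$, even the optimal section ($x=-1$) fails to be a homomorphism, while for it $\mathcal{S}(s_3)^2=\mathcal{S}(s_4)^2=1$. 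This establishes part (1).

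For part (2) I would use the conjugation formula $t\,t_\alpha\sigma_\alpha\,t^{-1}=t_\alpha\,(t s_\alpha(t^{-1}))\,\sigma_\alpha$ from the preliminaries together with the identity $t s_\alpha(t^{-1})=\alpha^\vee(\alpha(t))$. In coroot coordinates this action is diagonal: conjugation by $t$ multiplies the $i$-th coordinate of $t_{\alpha_i}$ by $\alpha_i(t)$ and leaves all other coordinates fixed. Thus $a,b,c,d$ are rescaled by $\alpha_1(t),\dots,\alpha_4(t)$, whereas the entry $x$ — which sits in the $\alpha_4^\vee$-slot of $t_3$ and the $\alpha_3^\vee$-slot of $t_4$ — is untouched, so it is a conjugation invariant. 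Because $\F_4$ has trivial fundamental group, the simple roots form a $\Z$-basis of $X^*(T)$, so $t\mapsto(\alpha_1(t),\dots,\alpha_4(t))$ surjects onto $(F^\times)^4$ and $a,b,c,d$ can be normalized independently to any prescribed values. Hence two sections are $T$-conjugate exactly when they share the same $x$, which by the previous step is exactly when they share the same order profile.

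Most of this is routine once the braid relations are solved; the only point demanding care is the $T$-conjugacy bookkeeping. The subtlety is that the single parameter $x$ appears inside two different $t_i$, and one must verify that it is genuinely fixed by the diagonal conjugation action while $a,b,c,d$ remain simultaneously free — and this is precisely where the fact that the simple roots of $\F_4$ span the full character lattice $X^*(T)$ gets used. I expect this invariance check to be the main (if modest) obstacle.
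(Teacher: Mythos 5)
Your proposal is correct and follows essentially the same route as the paper: solve the braid relations to get $t_1=(a,1,1,1)$, $t_2=(1,b,1,1)$, $t_3=(1,1,c,x)$, $t_4=(1,1,x,d)$ with $x^2=1$, read the order profiles off the squares $(t_i\sigma_i)^2$, and then analyze the conjugation action $t_\alpha\mapsto t_\alpha\,ts_\alpha(t^{-1})$ coordinate-wise. Your explicit treatment of part (2) — using $ts_\alpha(t^{-1})=\alpha^\vee(\alpha(t))$ so that conjugation rescales only the $\alpha_i^\vee$-slot of $t_i$ (leaving $x$ invariant), together with the fact that the simple roots of $\F_4$ form a $\Z$-basis of $X^*(T)$ so that $a,b,c,d$ can be normalized arbitrarily — is exactly the computation the paper's general method prescribes but leaves implicit.
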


\subsection{Type $\G_2$}\label{G_2}
We fix the standard set of roots $\alpha_1 = e_1 - e_2, \alpha_2 = -2e_1 +e_2 + e_3$.  We recall that $m(\alpha_1, \alpha_2) = 6$.  Note that $G$ is simply connected.  A computation shows that the $m(\alpha, \beta) = 6$ equation yields that 

\begin{itemize}
\item $t_1 = (a,b)$
\item $t_2 = (c,d)$
\end{itemize}
such that $c^2 = 1$ and $b^2 = 1$.  It is straightforward to see that $(t_1 \sigma_1)^2 = 1$ if and only if $b = -1$, and $(t_2 \sigma_2)^2 = 1$ if and only if $c = -1$.  Therefore,
\begin{proposition} $ $
\begin{enumerate}
\item The section given by $\mathcal{S}(s_i) = t_i \sigma_i$ with $c = -1, b = -1$ is optimal, in fact a homomorphic section.  Moreover, the Tits section is not optimal in the case of $\G_2$, and in fact $\mathcal{N}_{\circ}(s_1)^2 \neq 1, \mathcal{N}_{\circ}(s_2)^2 \neq 1$.  One can see that there are four order profiles of sections corresponding to the choice of the lifts of $s_1, s_2$ being order $2$ or order $4$.  We have that $(t_1 \sigma_1)^2 = 1$ if and only if $b = -1$, and $(t_2 \sigma_2)^2 = 1$ if and only if $c = -1$.  In particular, the section which lifts $s_2$ to an order $4$ element and $s_1$ to an order $2$ element is given by $b = -1$ and $c = 1$ and the section which lifts $s_1$ to an order $4$ element and $s_2$ to an order $2$ element is given by $b = 1$ and $c = -1$.
\item Two sections are $T$-conjugate if and only if they have the same value of $b$ and $c$.  In particular, two sections are $T$-conjugate if and only if they have the same order profile.
\end{enumerate}
\end{proposition}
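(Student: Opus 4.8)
The plan is to reduce the entire proposition to two short computations in the coroot coordinates $(x,y) = \alpha_1^{\vee}(x)\alpha_2^{\vee}(y)$, using the braid conditions $b^2 = c^2 = 1$ already established in \S\ref{G_2}. First I would record the action of the simple reflections on $X_*(T)$ in these coordinates. From the Cartan integers $\langle \alpha_2^{\vee}, \alpha_1\rangle = -1$ and $\langle \alpha_1^{\vee}, \alpha_2\rangle = -3$ one obtains, in multiplicative coordinates, $s_1(x,y) = (x^{-1}y,\, y)$ and $s_2(x,y) = (x,\, x^3 y^{-1})$. (Keeping the additive reflection action and the multiplicative torus coordinates consistent is the only real bookkeeping hazard here.)

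For the order profiles I would apply the identity $(t_i\sigma_i)^2 = t_i\, s_i(t_i)\,\alpha_i^{\vee}(-1)$ coming from Lemma \ref{realization}(2). Direct substitution gives $(t_1\sigma_1)^2 = (b,b^2)(-1,1) = (-b,\,b^2)$ and $(t_2\sigma_2)^2 = (c^2,c^3)(1,-1) = (c^2,\,-c^3)$. Since the braid relation forces $b,c \in \{\pm 1\}$, we have $b^2 = c^2 = 1$ and $c^3 = c$, so these simplify to $(-b,1)$ and $(1,-c)$, which are trivial exactly when $b = -1$, respectively $c = -1$. This yields everything in part (1): each lift has order $2$ or $4$; the four profiles come from the independent choices $b,c \in \{\pm 1\}$; and the lift of $s_1$ (resp. $s_2$) is an involution iff $b=-1$ (resp. $c=-1$). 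The choice $b=c=-1$ makes both lifts involutions satisfying the braid relation, hence, by the Coxeter presentation of the dihedral group $W$, a homomorphism $W \to N$; as the profile $(2,2)$ it is the unique maximum of the order-profile poset, whose Hasse diagram is therefore the diamond. The Tits section is $b=c=1$, giving order $4$ on both lifts, so it is not optimal and $\mathcal{N}_{\circ}(s_1)^2, \mathcal{N}_{\circ}(s_2)^2 \neq 1$. (Throughout one uses $-1 \neq 1$, i.e. $\mathrm{char}\,F \neq 2$, so that the distinction between orders $2$ and $4$ is meaningful.)

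For part (2) I would use $t\,t_\alpha\sigma_\alpha\,t^{-1} = t_\alpha\,\bigl(t\,s_\alpha(t^{-1})\bigr)\,\sigma_\alpha$, so that conjugation by $t = (u,v)$ replaces $t_\alpha$ by $t_\alpha\cdot t\,s_\alpha(t^{-1})$. Computing with the reflection formulas gives $t\,s_1(t^{-1}) = (u^2 v^{-1},\,1)$ and $t\,s_2(t^{-1}) = (1,\,u^{-3}v^2)$. Hence $b$ (the second coordinate of $t_1$) and $c$ (the first coordinate of $t_2$) are conjugation-invariant, while the remaining coordinates transform by $(a,d) \mapsto (a\,u^2 v^{-1},\, d\,u^{-3}v^2)$. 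The exponent matrix of this transformation is the $\G_2$ Cartan matrix, with determinant $1$, so $(u,v)\mapsto(u^2 v^{-1},\,u^{-3}v^2)$ is a surjective endomorphism of $(F^{\times})^2$; thus $a$ and $d$ can be adjusted to any prescribed values. Consequently two sections are $T$-conjugate iff they share $b$ and $c$. Combining with part (1), where the order profile is also determined precisely by $b$ and $c$, gives the final ``in particular'' statement that $T$-conjugacy and equality of order profiles coincide.

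The only step with genuine content is the surjectivity used in the last paragraph: a priori the coordinates $a$ and $d$ could constrain the conjugacy class, and the assertion that they do not rests entirely on the unimodularity of the Cartan matrix of $\G_2$. I expect this to be the main point to get right; everything else is a mechanical substitution into the two formulas for $s_1$ and $s_2$.
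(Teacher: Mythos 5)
Your proposal is correct and follows essentially the same route as the paper: coroot coordinates, the reflection formulas $s_1(x,y)=(x^{-1}y,y)$ and $s_2(x,y)=(x,x^3y^{-1})$, the identity $(t_i\sigma_i)^2 = t_i\,s_i(t_i)\,\alpha_i^{\vee}(-1)$ for the order profiles, and the conjugation formula $t\,t_\alpha\sigma_\alpha\,t^{-1} = t_\alpha\,t\,s_\alpha(t^{-1})\,\sigma_\alpha$ for part (2). The computations all check out; you merely make explicit two points the paper leaves as ``one can compute'' --- the dihedral-presentation argument for homomorphy at $b=c=-1$, and the surjectivity of $(u,v)\mapsto(u^2v^{-1},u^{-3}v^2)$ via the unimodularity of the $\G_2$ Cartan matrix (which, since $F^{\times}$ is divisible, would follow from nonvanishing of the determinant alone).
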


\subsection{Type $\E_8$}\label{E_8}
In type $\E_8$, we will write $t_1 = (a_1, a_2, ..., a_8), t_2 = (b_1, b_2, ..., b_8), ..., t_8 = (h_1, h_2, ..., h_8)$, and similarly for types $\E_7, \E_6$.

We fix the standard set of roots $\alpha_1 = \frac{1}{2}(e_1 + e_8) - \frac{1}{2}(e_2 +e_3 + e_4 + e_5 + e_6 + e_7), \alpha_2 = e_1 + e_2, \alpha_3  = e_2 - e_1, \alpha_4 = e_3 - e_2, \alpha_5 = e_4 - e_3, \alpha_6 = e_5 - e_4, \alpha_7 = e_6 - e_5, \alpha_8 = e_7 - e_6$.  Note that $G$ is simply connected.  A lengthy computation shows that the $m(\alpha, \beta) = 2$ equations yield that 

\[
 \begin{tabu}{||c c c c c c c c||} 
 \hline
 t_1 & t_2 & t_3 & t_4 & t_5 & t_6 & t_7 & t_8 \\ [0.5ex] 
 \hline\hline
a_4 = a_2^2 & b_3 = b_1^2 & c_4 = c_2^2 & d_3 = d_1^2 & e_3 = e_1^2 & f_3 = f_1^2 & g_3 = g_1^2 & h_3 = h_1^2 \\ 
 \hline
 a_2 a_3 a_5 = a_4^2 & b_1 b_4 = b_3^2 & c_4 c_6 = c_5^2 & d_5 d_7 = d_6^2 & e_4 = e_2^2 & f_4 = f_2^2 & g_4 = g_2^2 & h_4 = h_2^2 \\
 \hline
a_4 a_6 = a_5^2 & b_4 b_6 = b_5^2 & c_5 c_7 = c_6^2 & d_6 d_8 = d_7^2 & e_1 e_4 = e_3^2 & f_1 f_4 = f_3^2 & g_1 g_4 = g_3^2 & h_1 h_4 = h_3^2\\
 \hline
a_5 a_7 = a_6^2 & b_5 b_7 = b_6^2 & c_6 c_8 = c_7^2 & d_7 = d_8^2 & e_6 e_8 = e_7^2 & f_2 f_3 f_5 = f_4^2 & g_2 g_3 g_5 = g_4^2 & h_2 h_3 h_5 = h_4^2\\
 \hline
a_6 a_8 = a_7^2 & b_6 b_8 = b_7^2 & c_7 = c_8^2 & & e_7 = e_8^2 & f_7 = f_8^2 & g_4 g_6 = g_5^2 & h_4 h_6 = h_5^2 \\ 
 \hline
a_7 = a_8^2 & b_7 = b_8^2 & & & & & & h_5 h_7 = h_6^2\\ 
 \hline
\end{tabu}
\]
The $m(\alpha, \beta) = 3$ equations yield
$$a_4 = c_1 a_3, \ b_3 b_5 = d_2 b_4, \ c_1 c_2 c_5 = d_3 c_4, \ d_2 d_3 d_6 = e_4 d_5, \ f_4 f_7 = f_5 e_6, \ g_5 g_8 = f_7 g_6, \ h_6 = h_7 g_8$$ and
\begin{itemize}
\item $c_i = a_i \ \forall i \neq 1,3$
\item $d_i = b_i \ \forall i \neq 2,4$
\item $d_i = c_i \ \forall i \neq 3,4$
\item $e_i = d_i \ \forall i \neq 4,5$
\item $f_i = e_i \ \forall i \neq 5,6$
\item $g_i = f_i \ \forall i \neq 6,7$
\item $h_i = g_i \ \forall i \neq 7,8$
\end{itemize}

The above conditions together imply that

\begin{itemize}
\item $t_1 = (a, 1, 1, 1, 1, 1, 1, 1)$
\item $t_2 = (1, b, 1, 1, 1, 1, 1, 1)$
\item $t_3 = (1, 1, c, 1, 1, 1, 1, 1)$
\item $t_4 = (1, 1, 1, d, 1, 1, 1, 1)$
\item $t_5 = (1, 1, 1, 1, e, 1, 1, 1)$
\item $t_6 = (1, 1, 1, 1, 1, f, 1, 1)$
\item $t_7 = (1, 1, 1, 1, 1, 1, g, 1)$
\item $t_8 = (1, 1, 1, 1, 1, 1, 1, h)$
\end{itemize}

It is straightforward to see that $(t_i \sigma_i)^2 \neq 1$ for all $i$.  In particular, 

\begin{proposition} $ $
\begin{enumerate}
\item The Tits section is optimal in the case of $\E_8$, and $\mathcal{N}_{\circ}(s_i)^2 \neq 1 \ \forall i = 1, 2, ..., 8$.  Moreover, there is only one order profile of sections.
\item All sections are $T$-conjugate.
\end{enumerate}
\end{proposition}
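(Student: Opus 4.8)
\emph{Proof idea.} The braid relations have already pinned down every section: in the coroot basis each $t_i$ is concentrated in its $i$-th slot, so that $t_i = \alpha_i^{\vee}(x_i)$ for a free parameter $x_i \in F^{\times}$ (with $x_1 = a,\dots,x_8 = h$), the Tits section being the case $x_1 = \cdots = x_8 = 1$. For the order profile I would simply evaluate the squares. Using $\sigma_i t_i \sigma_i^{-1} = s_i(t_i)$ and $\sigma_i^2 = \alpha_i^{\vee}(-1)$ from Lemma~\ref{realization}, together with $s_i(\alpha_i^{\vee}) = -\alpha_i^{\vee}$ (so that $s_i(t_i) = t_i^{-1}$), one finds
\[
(t_i \sigma_i)^2 = t_i\, s_i(t_i)\, \alpha_i^{\vee}(-1) = \alpha_i^{\vee}(-1),
\]
independently of $x_i$.

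Since $\E_8$ is simply connected, $\alpha_1^{\vee},\dots,\alpha_8^{\vee}$ is a $\mathbf{Z}$-basis of $X_*(T)$; in particular each $\alpha_i^{\vee}$ is injective, so $\alpha_i^{\vee}(-1)$ is a nontrivial element of order $2$. Hence $(t_i\sigma_i)^2 \neq 1$ while $(t_i\sigma_i)^4 = 1$, i.e. every $\mathcal{S}(s_i)$ has order exactly $4$, for every section. This gives part (1) at once: all sections share the single order profile with every node labelled $4$, so there is a unique order profile; the Tits section realizes it and is therefore optimal, and $\mathcal{N}_{\circ}(s_i)^2 = \sigma_i^2 = \alpha_i^{\vee}(-1) \neq 1$.

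For part (2) the plan is to turn $T$-conjugacy into a linear problem governed by the Cartan matrix $C = (\langle\alpha_i,\alpha_j^{\vee}\rangle)$. Writing a general $t = \prod_j \alpha_j^{\vee}(v_j)$, a direct multiplicative computation gives $t\, s_i(t^{-1}) = \alpha_i^{\vee}\!\big(\prod_j v_j^{C_{ij}}\big)$, so conjugating the section with parameters $(x_i)$ by $t$ replaces $x_i$ with $x_i \prod_j v_j^{C_{ij}}$. By injectivity of the $\alpha_i^{\vee}$, conjugating an arbitrary section to the Tits section amounts to solving $\prod_j v_j^{C_{ij}} = x_i^{-1}$ for all $i$.

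The crux — and the only point where type $\E_8$ really enters — is that its Cartan matrix is unimodular, $\det C = 1$ (reflecting the trivial fundamental group), so $C \in \mathrm{GL}_8(\mathbf{Z})$ and $C^{-1}$ has integer entries. Then $v_j := \prod_i (x_i^{-1})^{(C^{-1})_{ji}} \in F^{\times}$ solves the system explicitly, so every section is $T$-conjugate to the Tits section and hence all sections are $T$-conjugate. More generally $\det C \neq 0$ together with divisibility of $F^{\times}$ would suffice for solvability; what is special about $\E_8$, and makes conjugacy transitive rather than merely possible along each coordinate, is that here each $t_i$ is a single coroot with no cross-terms, so the free parameters $x_i$ are precisely the coordinates that conjugation moves. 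In the types with nontrivial center the analogous system carries residual invariants, which is exactly why those types acquire several conjugacy classes. The main obstacle is thus conceptual rather than computational: recognizing the Cartan-matrix structure of the conjugation action and invoking the unimodularity of $C$.
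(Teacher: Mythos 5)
Your proposal is correct and follows essentially the same route as the paper: the braid relations reduce each $t_i$ to $\alpha_i^{\vee}(x_i)$ in the coroot basis, the square computation gives $(t_i\sigma_i)^2 = \alpha_i^{\vee}(-1) \neq 1$ (so every lift has order exactly $4$ and there is a single order profile), and $T$-conjugacy acts on the parameters $x_i$ through $t\,s_i(t^{-1}) = \alpha_i^{\vee}\bigl(\prod_j v_j^{C_{ij}}\bigr)$. Your explicit appeal to unimodularity of the $\E_8$ Cartan matrix to solve the conjugation system over $\mathbf{Z}$ is a clean way of organizing part (2), which the paper asserts without displaying the computation, but it is the same underlying argument rather than a different method.
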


\subsection{Type $\E_7$}\label{E_7}
\subsubsection{Simply connected type}
We fix the standard set of roots $\alpha_1, \alpha_2, ..., \alpha_7$ of $\E_7$, taken from the case $\E_8$.  Suppose that $G$ is simply connected.  

The $m(\alpha, \beta) = 2$ equations yield  

\[
 \begin{tabu}{||c c c c c c c||} 
 \hline
 t_1 & t_2 & t_3 & t_4 & t_5 & t_6 & t_7  \\ [0.5ex] 
 \hline\hline
a_4 = a_2^2 & b_3 = b_1^2 & c_4 = c_2^2 & d_3 = d_1^2 & e_3 = e_1^2 & f_3 = f_1^2 & g_3 = g_1^2  \\ 
 \hline
 a_2 a_3 a_5 = a_4^2 & b_1 b_4 = b_3^2 & c_4 c_6 = c_5^2 & d_5 d_7 = d_6^2 & e_4 = e_2^2 & f_4 = f_2^2 & g_4 = g_2^2   \\
 \hline
a_4 a_6 = a_5^2 & b_4 b_6 = b_5^2 & c_5 c_7 = c_6^2 & d_6 = d_7^2 & e_1 e_4 = e_3^2 & f_1 f_4 = f_3^2 & g_1 g_4 = g_3^2  \\
 \hline
a_5 a_7 = a_6^2 & b_5 b_7 = b_6^2 & c_6  = c_7^2 & & e_6  = e_7^2 & f_2 f_3 f_5 = f_4^2 & g_2 g_3 g_5 = g_4^2  \\
 \hline
a_6  = a_7^2 & b_6  = b_7^2 &  & &  & & g_4 g_6 = g_5^2  \\ 
 \hline
\end{tabu}
\]
The $m(\alpha, \beta) = 3$ equations yield 
$$a_4 = c_1 a_3, \ b_3 b_5 = d_2 b_4, \ c_1 c_2 c_5 = d_3 c_4, \ d_2 d_3 d_6 = e_4 d_5, \ f_4 f_7 = f_5 e_6, \ g_5 = f_7 g_6,$$ and
\begin{itemize}
\item $c_i = a_i \ \forall i \neq 1,3$
\item $d_i = b_i \ \forall i \neq 2,4$
\item $d_i = c_i \ \forall i \neq 3,4$
\item $e_i = d_i \ \forall i \neq 4,5$
\item $f_i = e_i \ \forall i \neq 5,6$
\item $g_i = f_i \ \forall i \neq 6,7$
\end{itemize}
The above conditions together imply that

\begin{itemize}
\item $t_1 = (a_1, x, 1, 1, x , 1, x)$
\item $t_2 = (1, b_2, 1, 1, x, 1, x)$
\item $t_3 = (1, x, c_3, 1, x , 1, x)$
\item $t_4 = (1, x, 1, d_4, x , 1, x)$
\item $t_5 = (1, x, 1, 1, e_5, 1, x)$
\item $t_6 = (1, x, 1, 1, x, f_6, x)$
\item $t_7 = (1, x, 1, 1, x, 1, g_7)$
\end{itemize}

such that $x^2 = 1$.  It is straightforward to see that $(t_i \sigma_i)^2 \neq 1$ for every $i = 1, 2, ..., 7$.

\begin{proposition} $ $
\begin{enumerate}
\item The Tits section is optimal in the case of simply connected $\E_7$, and $\mathcal{N}_{\circ}(s_i)^2 \neq 1 \ \forall i$. Moreover, there is only one order profile of sections.
\item Two sections are $T$-conjugate if and only if they have the same value of $x$.
\end{enumerate}
\end{proposition}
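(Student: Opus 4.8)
The plan is to take the parametrization of the $t_i$ already established above as given, and to extract both parts of the statement from a single computation of $(t_i \sigma_i)^2$. Recall from the discussion preceding the statement that $(t_i\sigma_i)^2 = t_i\, s_i(t_i)\, \sigma_i^2 = t_i\, s_i(t_i)\, \alpha_i^{\vee}(-1)$, using $\sigma_i t_i \sigma_i^{-1} = s_i(t_i)$ together with Lemma \ref{realization}(2). Since $G$ is simply connected I work throughout in the basis of simple coroots, so that $\alpha_i^{\vee}(-1)$ is the tuple with $-1$ in the $i$-th slot and $1$ elsewhere, and the action of $s_i$ on this basis is $s_i(\alpha_i^{\vee}) = -\alpha_i^{\vee}$, $s_i(\alpha_j^{\vee}) = \alpha_j^{\vee} + \alpha_i^{\vee}$ for $j$ adjacent to $i$ in the $\E_7$ Dynkin diagram, and $s_i(\alpha_j^{\vee}) = \alpha_j^{\vee}$ otherwise.

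For part (1), the key step is to show $t_i\, s_i(t_i) = 1$ for every $i$, whence $(t_i\sigma_i)^2 = \alpha_i^{\vee}(-1)$. Reading off coordinates, for $k \neq i$ the $k$-th coordinate of $t_i\,s_i(t_i)$ is $((t_i)_k)^2$, and the $i$-th coordinate is $\prod_{j \sim i}(t_i)_j$, where $j \sim i$ ranges over the neighbors of $i$. In the established form of the $t_i$ every off-diagonal coordinate $(t_i)_k$ with $k\neq i$ is a power of $x$, so squares to $1$ because $x^2 = 1$; and inspecting the neighbors node by node shows that among $\{(t_i)_j : j \sim i\}$ the entry $x$ occurs an even number of times (it occurs twice for $i = 4$ and $i = 6$ and not at all otherwise), so $\prod_{j\sim i}(t_i)_j = 1$ as well. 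Hence $t_i\,s_i(t_i) = 1$ and $(t_i\sigma_i)^2 = \alpha_i^{\vee}(-1)$ for every section and every $i$. Because $G$ is simply connected (and $\mathrm{char}\,F \neq 2$), $\alpha_i^{\vee}(-1) \neq 1$ while its square is trivial, so $t_i\sigma_i$ has order exactly $4$. Thus every section carries the order profile with all nodes labelled $4$; there is a single order profile, the Tits section realizes it and is therefore optimal, and the special case $t_i = 1$ gives $\mathcal{N}_{\circ}(s_i)^2 = \alpha_i^{\vee}(-1) \neq 1$.

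For part (2), I would compute the effect of $T$-conjugation from $t\,\mathcal{S}(s_i)\,t^{-1} = t_i\cdot t\,s_i(t^{-1})\cdot \sigma_i$. Writing $t = (v_1,\dots,v_7)$ in coroot coordinates, the same bookkeeping shows $t\,s_i(t^{-1}) = \alpha_i^{\vee}\!\bigl(v_i^2 \prod_{j\sim i} v_j^{-1}\bigr)$: every coordinate away from $i$ cancels, and only the $i$-th diagonal entry of $t_i$ is altered, by the factor $v_i^2\prod_{j\sim i}v_j^{-1}$. In particular conjugation fixes all off-diagonal coordinates, and since $x$ is read off from these, $T$-conjugate sections must share the same $x$. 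Conversely, two sections with the same $x$ agree in every off-diagonal coordinate and can differ only in the free diagonal entries $(t_i)_i \in \{a_1,b_2,c_3,d_4,e_5,f_6,g_7\}$; to conjugate one to the other I must solve $v_i^2\prod_{j\sim i}v_j^{-1} = w_i$ for the prescribed ratios $w_i$. This is exactly the system $\prod_{j} v_j^{C_{ij}} = w_i$ for the $\E_7$ Cartan matrix $C$, which, having nonzero determinant, is solvable over $F^{\times}$ because $F$ is algebraically closed and hence $F^{\times}$ is divisible. Therefore equality of $x$ is also sufficient for $T$-conjugacy.

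The main obstacle is the first key step, the clean vanishing $t_i\,s_i(t_i) = 1$, which is what forces a unique order profile and is not a priori obvious since the diagonal entries are free. It rests entirely on the parity observation that each node of the $\E_7$ diagram carries an even number of neighbors bearing the coordinate $x$ in the relevant $t_i$; this is precisely where the exact shape of the parametrization and the relation $x^2 = 1$ are used together. The only other point requiring care is the solvability of the Cartan system in part (2), where divisibility of $F^{\times}$ is essential.
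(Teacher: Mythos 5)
Your proposal is correct and follows essentially the same route as the paper: the paper simply asserts that from the established parametrization it is "straightforward to see" that $(t_i\sigma_i)^2 \neq 1$ for all $i$, and handles $T$-conjugacy by the general method $t\,t_\alpha\sigma_\alpha t^{-1} = t_\alpha\, t s_\alpha(t^{-1})\,\sigma_\alpha$ described at the start of \S\ref{sections}. Your writeup supplies the details the paper omits — in particular the clean identity $t_i\,s_i(t_i)=1$ (via the parity of $x$-entries at neighboring nodes), which pins the square down to exactly $\alpha_i^{\vee}(-1)$ and hence justifies the uniqueness of the order profile, and the Cartan-matrix surjectivity argument for the converse direction of (2).
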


\subsubsection{Adjoint type}

We fix the standard set of fundamental coweights $\omega_1, \omega_2, ..., \omega_7$ from \cite{Bou02}.  We record the following actions: $s_i(\omega_j) = \omega_j \ \forall i \neq j$, $s_1(\omega_1) = \omega_3 - \omega_1, s_2(\omega_2) = \omega_4 - \omega_2, s_3(\omega_3) = \omega_1 + \omega_4 - \omega_3, s_4(\omega_4) = \omega_2 + \omega_3 + \omega_5 - \omega_4, s_5(\omega_5) = \omega_4 + \omega_6 - \omega_5, s_6(\omega_6) = \omega_5 + \omega_7 - \omega_6, s_7(\omega_7) = \omega_6 - \omega_7$.

The $m(\alpha, \beta) = 2$ equations yield that all entries of the elements $t_1, t_2, ..., t_7$ are equal to $1$ except $a_1, a_3, b_2, b_4, c_1, c_3, c_4, d_2, d_3, d_4, d_5, e_4, e_5, e_6, f_5, f_6, f_7, g_6, g_7$.

The $m(\alpha, \beta) = 3$ equations yield
$$c_1^2 a_3^2 a_1 c_3 = 1, a_4 = c_1 c_3 c_4, d_2^2 b_4^2 b_2 d_4 = 1, b_3 = d_2 d_3 d_4, b_5 = d_2 d_4 d_5, d_1 = c_1 c_3 c_4, c_2 = d_2 d_3 d_4, d_3^2 c_4^2 d_4 c_3 = 1, $$ $$c_5 = d_3 d_4 d_5, e_2 = d_2 d_4 d_5, e_3 = d_3 d_4 d_5, e_4^2 d_5^2 e_5 d_4 = 1, d_6 = e_4 e_5 e_6, f_4 = e_4 e_5 e_6, f_5^2 e_6^2 f_6 e_5 = 1, e_7 = f_5 f_6 f_7, $$ $$g_5 = f_5 f_6 f_7, g_6^2 f_7^2 g_7 f_6 = 1,$$ and

\begin{itemize}
\item $c_i = a_i \ \forall i \neq 1,3,4$
\item $d_i = b_i \ \forall i \neq 2,3,4,5$
\item $c_6 = d_6, c_7 = d_7$
\item $e_1 = d_1, e_7 = d_7$
\item $f_1 = e_1, f_2 = e_2, f_3 = e_3$
\item $f_i = g_i \ \forall i \neq 5,6,7$
\end{itemize}

The above conditions together imply that

\begin{itemize}
\item $t_1 = (a^{-2}, 1, a, 1, 1, 1, 1)$
\item $t_2 = (1, b^{-2}, 1, b, 1, 1, 1)$
\item $t_3 = (c, 1, c^{-2}, c, 1, 1, 1)$
\item $t_4 = (1, d, d, d^{-2}, d, 1, 1)$
\item $t_5 = (1, 1, 1, e, e^{-2}, e, 1)$
\item $t_6 = (1, 1, 1, 1, f, f^{-2}, f)$
\item $t_7 = (1, 1, 1, 1, 1, g, g^{-2})$
\end{itemize}
with no conditions on $a, b, c, d, e, f, g$.  It is straightforward to see that $(t_i \sigma_i)^2 \neq 1$ for all $i$. 

\begin{proposition} $ $
\begin{enumerate}
\item The Tits section is optimal in the case of adjoint $\E_7$, and $\mathcal{N}_{\circ}(s_i)^2 \neq 1 \ \forall i.$  Moreover, one computes that there is only one order profile of sections.
\item All sections are $T$-conjugate.
\end{enumerate}
\end{proposition}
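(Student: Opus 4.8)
The plan is to follow the general method outlined at the start of \S\ref{sections}, specialized to adjoint $\E_7$. The two assertions to prove are: (1) the Tits section is optimal and satisfies $\mathcal{N}_\circ(s_i)^2 \neq 1$ for all $i$, with a unique order profile; and (2) all sections are $T$-conjugate. Since the braid-relation computations have already pinned down the form of the $t_i$ (displayed above, with free parameters $a,b,c,d,e,f,g$ and no further constraints), the real work is the order analysis and the conjugacy analysis. First I would verify that the displayed $t_i$ genuinely satisfy all braid relations by back-substituting into the listed $m(\alpha,\beta)=2$ and $m(\alpha,\beta)=3$ conditions; for instance $c_1^2 a_3^2 a_1 c_3 = 1$ becomes $c^2 a^2 a^{-2} c^{-2} = 1$, which holds identically, and similarly for the remaining cubic relations. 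This confirms that the solution space is exactly the seven-parameter family claimed, with no constraints.

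Next I would compute $(t_i\sigma_i)^2$ for each $i$ and show it is never trivial, regardless of the parameter values. Using $(t_i\sigma_i)^2 = t_i\, s_i(t_i)\,\sigma_i^2 = t_i\, s_i(t_i)\,\alpha_i^\vee(-1)$ from Lemma \ref{realization}(2), I would carry out the computation in the fundamental-coweight basis, exactly as in the adjoint $\A_n$ and $\B_n$ cases, using the listed reflection actions $s_i(\omega_j)$ and expressing each $\alpha_i^\vee(-1)$ in terms of the $\omega_k$ (solving the linear system obtained by pairing with the simple roots, as was done to compute $\alpha_1^\vee(-1) = \omega_2(-1)$ in adjoint $\A_n$). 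Since the parameters $a,\dots,g$ are completely unconstrained, the key point is that the sign contribution coming from $\alpha_i^\vee(-1)$ survives in some coordinate of $(t_i\sigma_i)^2$ no matter how the free parameters are chosen; once one exhibits a single coordinate that is forced to equal $-1$ (times a unit the parameters cannot cancel because they appear in other coordinates), one concludes $(t_i\sigma_i)^2 \neq 1$. Because there is no choice of parameters making any lift order $2$, every section has the same order profile (all nodes labeled $4$), so the Tits section, being one such section, is automatically optimal and there is a unique order profile.

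For part (2), I would use the formula $t\,\mathcal{S}(s_\alpha)\,t^{-1} = t_\alpha\, t\, s_\alpha(t^{-1})\,\sigma_\alpha$ recorded in \S\ref{sections}. Writing $t = (v_1,\dots,v_7)$ in the coweight basis, I would compute $t\, s_i(t^{-1})$ for each $i$ using the listed $s_i$-actions, obtaining a system expressing how the parameters $a,\dots,g$ of a section transform under conjugation by $t$. The goal is to show this $T$-action is transitive on the seven-parameter family: given any two sections $\mathcal{S},\mathcal{S}'$ with parameters $(a,\dots,g)$ and $(a',\dots,g')$, I must produce $v_1,\dots,v_7$ solving the resulting equations. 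Because the torus is $7$-dimensional (adjoint type, so $X_*(T)$ has rank $7$) and each reflection $s_i$ contributes an independent twisting factor $v_{i-1}^{?} v_i^{?} v_{i+1}^{?}$ tied to the $\E_7$ Cartan integers, the associated linear-algebra problem over $F^\times$ should be solvable; concretely I expect the transformation on each free parameter to involve a monomial in the $v_k$ that can be set to any prescribed unit in $F^\times$ (which is divisible since $F$ is algebraically closed), so every orbit is all of the family.

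The main obstacle will be part (2): verifying transitivity of the $T$-action requires showing the relevant exponent matrix coming from the $\E_7$ Dynkin data has the right surjectivity over $F^\times$, and I expect the trivalent node $\alpha_4$ to make the coordinate bookkeeping delicate, since $s_4(\omega_4) = \omega_2 + \omega_3 + \omega_5 - \omega_4$ couples three neighbors at once. The cleanest route is to observe that since the coweights give a basis of $X_*(T)$ and $F$ is algebraically closed (hence $F^\times$ is divisible), one can solve the twisting equations coordinate by coordinate, starting from the free ends of the Dynkin diagram and propagating inward; I would organize the solution along a chosen ordering of the nodes so that at each step exactly one new $v_k$ is determined, thereby avoiding having to invert the full exponent matrix directly.
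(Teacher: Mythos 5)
Your proposal is correct and follows essentially the same route as the paper: derive the seven-parameter family $t_1,\dots,t_7$ from the braid relations, compute $(t_i\sigma_i)^2$ in the fundamental-coweight basis, and analyze the $T$-conjugation action $t_\alpha \mapsto t_\alpha\, t\, s_\alpha(t^{-1})$. Both computations are in fact simpler than you anticipate: each $t_i$ equals $\alpha_i^\vee(x_i)$ for a single parameter $x_i$ (e.g.\ $t_1 = \alpha_1^\vee(a^{-1})$), and since $s_i$ inverts $\alpha_i^\vee$ the parameters cancel exactly, giving $(t_i\sigma_i)^2 = \alpha_i^\vee(-1) \neq 1$ because the $i$-th column of the Cartan matrix contains an odd entry $-1$; likewise the twisting factor is $t\,s_i(t^{-1}) = \alpha_i^\vee(v_i)$, which involves only $v_i$ --- not a monomial in $v_{i-1}, v_i, v_{i+1}$ as you wrote --- so conjugation rescales each parameter independently, transitivity is immediate, and neither the propagation along the Dynkin diagram nor the divisibility of $F^{\times}$ is needed (even at the trivalent node).
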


\subsection{Type $\E_6$}\label{E_6}
\subsubsection{Simply Connected Type}
We fix the standard set of roots $\alpha_1, \alpha_2, ..., \alpha_6$ of $\E_6$, taken from the case $\E_8$.  Suppose that $G$ is simply connected.  

The $m(\alpha, \beta) = 2$ equations yield that 

\[
 \begin{tabu}{||c c c c c c||} 
 \hline
 t_1 & t_2 & t_3 & t_4 & t_5 & t_6   \\ [0.5ex] 
 \hline\hline
a_4 = a_2^2 & b_3 = b_1^2 & c_4 = c_2^2 & d_3 = d_1^2 & e_3 = e_1^2 & f_3 = f_1^2   \\ 
 \hline
 a_2 a_3 a_5 = a_4^2 & b_1 b_4 = b_3^2 & c_4 c_6 = c_5^2 & d_5  = d_6^2 & e_4 = e_2^2 & f_4 = f_2^2    \\
 \hline
a_4 a_6 = a_5^2 & b_4 b_6 = b_5^2 & c_5  = c_6^2 &  & e_1 e_4 = e_3^2 & f_1 f_4 = f_3^2   \\
 \hline
a_5  = a_6^2 & b_5  = b_6^2 &  & &  & f_2 f_3 f_5 = f_4^2   \\
 \hline
\end{tabu}
\]
The $m(\alpha, \beta) = 3$ equations yield that
$$a_4 = c_1 a_3, \ b_3 b_5 = d_2 b_4, \ c_1 c_2 c_5 = d_3 c_4, \ d_2 d_3 d_6 = e_4 d_5, \ f_4 = f_5 e_6,$$ and

\begin{itemize}
\item $c_i = a_i \ \forall i \neq 1,3$
\item $d_i = b_i \ \forall i \neq 2,4$
\item $d_i = c_i \ \forall i \neq 3,4$
\item $e_i = d_i \ \forall i \neq 4,5$
\item $f_i = e_i \ \forall i \neq 5,6$
\end{itemize} 
The above conditions together imply that
\begin{itemize}
\item $t_1 = (a_1, 1, y, 1, y^2, y)$
\item $t_2 = (y^2, b_2, y, 1, y^2, y)$
\item $t_3 = (y^2, 1, c_3, 1, y^2, y)$
\item $t_4 = (y^2, 1, y, d_4, y^2, y)$
\item $t_5 = (y^2, 1, y, 1, e_5, y)$
\item $t_6 = (y^2, 1, y, 1, y^2, f_6)$
\end{itemize}

such that $y^3 = 1$. It is straightforward to see that $(t_i \sigma_i)^2 \neq 1$ for every $i = 1, 2, ..., 6$.  We compute, for example, that 
$$(t_1 \sigma_1)^2 = (-y, 1, y^2, 1, y, y^2).$$ We can conclude

\begin{proposition} $ $
\begin{enumerate}
\item The Tits section is optimal in the case of simply connected $\E_6$, and $\mathcal{N}_{\circ}(s_i)^2 \neq 1 \ \forall i.$  There is one other order profile of sections, given by $y$ is a non-trivial cube root of unity, which satisfies that the lifts of all $s_i$ have order $12$.
\item Two sections are $T$-conjugate if and only if they have the same value of $y$.
\end{enumerate}
\end{proposition}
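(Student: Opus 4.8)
The plan is to leverage the explicit description of the $t_i$ just obtained and reduce everything to computing the squares $(t_i\sigma_i)^2 = t_i\, s_{\alpha_i}(t_i)\,\alpha_i^\vee(-1)$ in the coroot coordinates. The crucial structural observation is that each $t_i$ carries its free parameter (namely $a_1, b_2, c_3, d_4, e_5, f_6$ respectively) precisely in its $i$-th coordinate, while $s_{\alpha_i}$ sends $\lambda_i = \alpha_i^\vee$ to $-\lambda_i$; hence in the product $t_i\,s_{\alpha_i}(t_i)$ the free parameter in the $i$-th slot cancels against its inverse, and $(t_i\sigma_i)^2$ depends only on $y$. I would carry out this computation node by node using the reflection formulas $s_i(\lambda_i) = -\lambda_i$, $s_i(\lambda_j)=\lambda_j+\lambda_i$ for $j$ adjacent to $i$ in the $\E_6$ diagram, and $s_i(\lambda_j)=\lambda_j$ otherwise, together with $y^3=1$. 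The displayed sample $(t_1\sigma_1)^2 = (-y,1,y^2,1,y,y^2)$ is exactly the prototype, and the remaining five cases are entirely analogous.

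Having these six torus elements, I would read off their orders. Since $T \cong (F^\times)^6$ coordinatewise in the coroot basis, the order of $(t_i\sigma_i)^2$ is immediate from its coordinates, and the order of $t_i\sigma_i$ is then twice that (as $t_i\sigma_i\notin T$ but $(t_i\sigma_i)^2\in T$, while odd powers never lie in $T$). When $y=1$ each square equals $\alpha_i^\vee(-1)$, of order $2$, so every $t_i\sigma_i$ has order $4$; in particular no section is homomorphic and $\mathcal{N}_{\circ}(s_i)^2 \neq 1$. When $y$ is a primitive cube root of unity, the combination of a power of $y$ with the sign coming from $\alpha_i^\vee(-1)$ forces the least $k$ with $(t_i\sigma_i)^{2k}=1$ to be divisible by both $3$ and $2$, so the torus element has order $6$ and $t_i\sigma_i$ has order $12$. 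This yields exactly two order profiles, all-$4$ (obtained precisely when $y=1$, which includes the Tits section $t_i=1$) and all-$12$ (the two primitive values of $y$); since $4<12$ coordinatewise, the all-$4$ profile is the unique maximum of the order partial order, so the Tits section is optimal.

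For the $T$-conjugacy statement I would use that conjugating $\mathcal{S}$ by $t=\prod_j\lambda_j(v_j)$ replaces $t_i$ by $t_i\cdot t\,s_{\alpha_i}(t^{-1})$, and that $t\,s_{\alpha_i}(t^{-1}) = \alpha_i^\vee\!\big(\prod_j v_j^{\langle\lambda_j,\alpha_i\rangle}\big)$ is a power of $\alpha_i^\vee=\lambda_i$; thus conjugation alters only the $i$-th coordinate of $t_i$, i.e. only the free parameters, and leaves every coordinate encoding $y$ untouched. Hence $y$ is a conjugacy invariant, giving the ``only if'' direction. For ``if'', I would note that the map $(v_j)\mapsto\big(\prod_j v_j^{\langle\lambda_j,\alpha_i\rangle}\big)_i$ is given by the $\E_6$ Cartan matrix, which has nonzero determinant, so over the algebraically closed (hence divisible) field $F$ it is surjective onto $(F^\times)^6$; therefore any two sections with the same $y$ can be conjugated onto one another by adjusting the free parameters. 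The main obstacle is the bookkeeping in the order computation for primitive $y$: one must verify for every node $i$ that the exponent pattern of $y$ and the extra sign genuinely combine to force order $6$ (rather than $3$ or $2$) on the torus element, which is a small but essential finite check ensuring the claimed uniform order $12$ really holds across all six lifts.
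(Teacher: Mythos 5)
Your proof is correct and follows essentially the same route as the paper: computing $(t_i\sigma_i)^2 = t_i\,s_{\alpha_i}(t_i)\,\alpha_i^\vee(-1)$ in coroot coordinates from the explicit $t_i$ (the paper's sample $(t_1\sigma_1)^2=(-y,1,y^2,1,y,y^2)$ is exactly your prototype), and classifying $T$-conjugacy via $t_\alpha \mapsto t_\alpha\, t\, s_\alpha(t^{-1})$ as in the paper's general method. Your two organizing observations --- that the free parameter in the $i$-th slot always cancels so the squares depend only on $y$, and that surjectivity of the Cartan-matrix map on $(F^\times)^6$ (divisibility of $F^\times$) handles the ``if'' direction of conjugacy --- are exactly the details the paper leaves as ``straightforward,'' cleanly justified.
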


\subsubsection{Adjoint Type}

We fix the standard set of fundamental coweights $\omega_1, \omega_2, ..., \omega_6$ from \cite{Bou02}.  We record the following actions: $s_i(\omega_j) = \omega_j \ \forall i \neq j$, $s_1(\omega_1) = \omega_3 - \omega_1, s_2(\omega_2) = \omega_4 - \omega_2, s_3(\omega_3) = \omega_1 + \omega_4 - \omega_3, s_4(\omega_4) = \omega_2 + \omega_3 + \omega_5 - \omega_4, s_5(\omega_5) = \omega_4 + \omega_6 - \omega_5, s_6(\omega_6) = \omega_5  - \omega_6$.

The $m(\alpha, \beta) = 2$ equations yield that all entries of the elements $t_1, t_2, ..., t_6$ are equal to $1$ except $a_1, a_3, b_2, b_4, c_1, c_3, c_4, d_2, d_3, d_4, d_5, e_4, e_5, e_6, f_5, f_6$.

The $m(\alpha, \beta) = 3$ equations yield
$$c_1^2 a_3^2 a_1 c_3 = 1, a_4 = c_1 c_3 c_4, d_2^2 b_4^2 b_2 d_4 = 1, b_3 = d_2 d_3 d_4, b_5 = d_2 d_4 d_5, d_1 = c_1 c_3 c_4, c_2 = d_2 d_3 d_4, d_3^2 c_4^2 d_4 c_3 = 1, $$ $$c_5 = d_3 d_4 d_5, e_2 = d_2 d_4 d_5, e_3 = d_3 d_4 d_5, e_4^2 d_5^2 e_5 d_4 = 1, d_6 = e_4 e_5 e_6, f_4 = e_4 e_5 e_6, f_5^2 e_6^2 f_6 e_5 = 1,$$ and

\begin{itemize}
\item $c_i = a_i \ \forall i \neq 1,3,4$
\item $d_i = b_i \ \forall i \neq 2,3,4,5$
\item $c_6 = d_6$
\item $e_1 = d_1$
\item $f_1 = e_1, f_2 = e_2, f_3 = e_3$
\end{itemize}

One can check that all of these conditions amount to 

\begin{itemize}
\item $t_1 = (a^{-2}, 1, a, 1, 1, 1)$
\item $t_2 = (1, b^{-2}, 1, b, 1, 1)$
\item $t_3 = (c, 1, c^{-2}, c, 1, 1)$
\item $t_4 = (1, d, d, d^{-2}, d, 1)$
\item $t_5 = (1, 1, 1, e, e^{-2}, e)$
\item $t_6 = (1, 1, 1, 1, f, f^{-2})$
\end{itemize}

It is straightforward to see that $(t_i \sigma_i)^2 \neq 1$ for every $i = 1, 2, ..., 6$.

\begin{proposition} $ $
\begin{enumerate}
\item The Tits section is optimal in the case of adjoint $\E_6$, and $\mathcal{N}_{\circ}(s_i)^2 \neq 1 \ \forall i = 1, 2, ..., 6.$  There is only one order profile of sections.
\item All sections are $T$-conjugate.
\end{enumerate}
\end{proposition}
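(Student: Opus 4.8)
The plan is to read off from the displayed formulas the single structural fact that drives everything: writing $r_1 = a, r_2 = b, r_3 = c, r_4 = d, r_5 = e, r_6 = f$, each $t_i$ is exactly a value of the $i$-th simple coroot, namely $t_i = \alpha_i^{\vee}(r_i^{-1})$. This is checked by comparing coordinates in the coweight basis, using that the coordinates of $\alpha_i^{\vee}$ form the $i$-th column of the Cartan matrix of $\E_6$; it is a routine verification for each of the six $t_i$. Once in hand, both parts of the proposition reduce to short computations.

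For part (1) I would compute the orders of the lifts uniformly. Since $s_i(\alpha_i^{\vee}) = -\alpha_i^{\vee}$, we get $s_i(t_i) = \alpha_i^{\vee}(r_i)$, whence $t_i s_i(t_i) = \alpha_i^{\vee}(r_i^{-1}) \alpha_i^{\vee}(r_i) = 1$, and therefore
\[
(t_i \sigma_i)^2 = t_i\, s_i(t_i)\, \sigma_i^2 = \sigma_i^2 = \alpha_i^{\vee}(-1),
\]
by Lemma \ref{realization}(2), independently of the parameters. It then remains to check that $\alpha_i^{\vee}(-1) \neq 1$ in the adjoint torus (here, as throughout, $\mathrm{char}\, F \neq 2$). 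As $X_*(T)$ is the coweight lattice, $\alpha_i^{\vee}(-1) = 1$ would force $\alpha_i^{\vee} \in 2 X_*(T)$; but the coweight coordinates of $\alpha_i^{\vee}$ have an odd entry (the value $-1$) at every node adjacent to $i$, and every node of $\E_6$ has a neighbor. Hence $\alpha_i^{\vee}(-1)$ has order $2$ and $\mathcal{S}(s_i)$ has order exactly $4$, for all $i$ and for every section. There is thus a single order profile (order $4$ at every node), which is the unique, hence maximal, element of the poset; the Tits section ($r_i = 1$) realizes it and so is optimal, and in particular $\mathcal{N}_{\circ}(s_i)^2 = \alpha_i^{\vee}(-1) \neq 1$.

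For part (2) I would describe the $T$-action explicitly. For $t = \prod_j \omega_j(v_j)$, the relations $s_i(\omega_j) = \omega_j$ ($j \neq i$) and $s_i(\omega_i) = \omega_i - \alpha_i^{\vee}$ give $t\, s_i(t^{-1}) = \alpha_i^{\vee}(v_i)$, so conjugation by $t$ sends $t_i \mapsto t_i \alpha_i^{\vee}(v_i)$. Given two sections with parameters $(r_i)$ and $(r_i')$, so that $t_i = \alpha_i^{\vee}(r_i^{-1})$ and $t_i' = \alpha_i^{\vee}((r_i')^{-1})$, the $i$-th factor is altered only through the $i$-th coordinate $v_i$; thus setting $v_i = r_i (r_i')^{-1}$ solves $t_i \alpha_i^{\vee}(v_i) = t_i'$ simultaneously for all $i$. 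A single $t \in T$ therefore conjugates the first section to the second, so all sections are $T$-conjugate.

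The main obstacle is not in the above, which is entirely formal once the parameterization $t_i = \alpha_i^{\vee}(r_i^{-1})$ is available; it lies in establishing that parameterization, i.e. in solving the $m(\alpha,\beta) = 2$ and $m(\alpha,\beta) = 3$ relations — the long computation already carried out before the statement. The only genuinely content-bearing point in the proof itself is the observation that no simple coroot of $\E_6$ is twice a coweight, which is what makes $\alpha_i^{\vee}(-1) \neq 1$ and pins down the common order profile.
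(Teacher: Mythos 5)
Your proof is correct and follows the same route as the paper: both start from the parameterization of the $t_i$ produced by the braid relations, and both verify the two claims via the formulas $(t_i \sigma_i)^2 = t_i\, s_i(t_i)\, \sigma_i^2$ and $t\, t_i \sigma_i\, t^{-1} = t_i\, t\, s_i(t^{-1})\, \sigma_i$. Your identification $t_i = \alpha_i^{\vee}(r_i^{-1})$ — which makes $t_i s_i(t_i) = 1$, hence $(t_i\sigma_i)^2 = \alpha_i^{\vee}(-1)$, and gives $t\, s_i(t^{-1}) = \alpha_i^{\vee}(v_i)$ — is simply a clean, uniform packaging of the coordinate computations the paper dismisses as ``straightforward,'' and your lattice-theoretic point that no simple coroot of $\E_6$ lies in $2X_*(T)$ (for $\mathrm{char}\, F \neq 2$) correctly justifies that every lift has order exactly $4$, so there is a single order profile and the Tits section is optimal.
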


\section{Application: splitting the Kottwitz homomorphism}\label{application}
Set $W^{\mathrm{ext}} = N_G(T) / T_{\circ}$, where $T_{\circ}$ is the maximal bounded subgroup of a maximal torus $T$ in $G$.  The group $W^{\mathrm{ext}}$ is the extended affine Weyl group, and we note that we have a semidirect product decomposition $W^{\mathrm{ext}} = X_*(T) \rtimes W$, where $X_*(T)$ is the cocharacter lattice of $T$.  
We also set $\Omega = W^{\mathrm{ext}} / W^{\mathrm{aff}}$, where $W^{\mathrm{aff}} = Q^{\vee} \rtimes W$ is the affine Weyl group and $Q^{\vee}$ is the coroot lattice.  We therefore have a canonical projection $N_G(T) \twoheadrightarrow \Omega$.  This projection is the Kottwitz homomorphism (see \cite{Kot97}).

It is known (see \cite[Proposition 1.18]{IM65} or \cite[Proposition 3.1]{Adr18}) that the elements of $\Omega$ may be represented by a collection of elements $\{1, \epsilon_i \rtimes w_i \} \subset  X_*(T) \rtimes W$, where $\epsilon_i$ are certain fundamental coweights.  In \cite{Adr18}, we considered the map
$$
\iota : \Omega \rightarrow N_G(T)
$$
$$
 (\epsilon_i, w_i) \mapsto \epsilon_i(\varpi^{-1}) \mathcal{N}_{\circ}(w_i),
$$
where $\varpi$ is a uniformizer in $F$.
The map $\iota$ is a section of the projection $N_G(T) \twoheadrightarrow \Omega$, and it turned out that $\iota$ is a homomorphism in all cases except the adjoint group of type $D_n$ where $n$ is odd, and some type $A_n$ cases.  Nonetheless, we were still able to use $\iota$ to construct a homomorphic section of the Kottwitz homomorphism, for all almost-simple $p$-adic groups (see \cite[Theorem 3.5]{Adr18}).  

The question about finding a homomorphic section boiled down to finding a homomorphic lift of the subgroup $\mathcal{J}$ of $W$ generated by the $w_i$.  We attempted to show that Tits' section $\mathcal{N}_{\circ}$ achieved this goal, but $\mathcal{N}_{\circ}$ turns out to not lift $\mathcal{J}$ in adjoint type $\D_n$ with $n$ odd, and some cases of type $\A_n$.  Here, we will show that if $\mathcal{S}$ is an optimal section, then $\mathcal{S}$ lifts $\mathcal{J}$.

We now recall a result about the map $\mathcal{N}_{\circ}$ from \cite{LS87}.

\begin{definition}
For $u,v \in W$, we define 
\[
\mathcal{F}(u,v) = \{\alpha \in \Pi \ | \ v(\alpha) \in -\Pi, u(v(\alpha)) \in \Pi \}.
\]
\end{definition}
The following proposition describes the failure of $\mathcal{N}_{\circ}$ to be a homomorphism (this proposition can also be found in \cite{Ros16}).
\begin{proposition}\cite[Lemma 2.1.A.]{LS87}\label{ahomomorphicity}
For $u,v \in W$,
\[
\mathcal{N}_{\circ}(u) \cdot \mathcal{N}_{\circ}(v) = \mathcal{N}_{\circ}(u \cdot v) \cdot \displaystyle\prod_{\alpha \in \mathcal{F}(u,v)} \alpha^{\vee}(-1).
\]
\end{proposition}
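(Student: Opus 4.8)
The plan is to measure the failure of $\mathcal{N}_{\circ}$ to be a homomorphism by the element
\[
p(u,v) := \mathcal{N}_{\circ}(uv)^{-1}\,\mathcal{N}_{\circ}(u)\,\mathcal{N}_{\circ}(v),
\]
which lies in $T$ since it maps to $(uv)^{-1}uv = 1$ in $W = N/T$; the goal is to show $p(u,v) = \prod_{\alpha \in \mathcal{F}(u,v)}\alpha^{\vee}(-1)$, interpreting $\Pi$ as the set $\Phi^{+}$ of positive roots, as the formula demands. I would argue by induction on $\ell(v)$, the base case $v = 1$ being immediate because $\mathcal{F}(u,1) = \emptyset$ and both sides reduce to $\mathcal{N}_{\circ}(u)$. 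Throughout I would use three ingredients: that $\mathcal{N}_{\circ}$ is multiplicative along reduced words, so $\mathcal{N}_{\circ}(w s_\beta) = \mathcal{N}_{\circ}(w)\sigma_\beta$ whenever $\ell(w s_\beta) = \ell(w)+1$; that $\sigma_\beta^{2} = \beta^{\vee}(-1)$ by Lemma \ref{realization}(2); and that $\mathcal{N}_{\circ}(w)\,t\,\mathcal{N}_{\circ}(w)^{-1} = w(t)$ for $t \in T$.

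For the inductive step I would pick a simple left descent $\beta$ of $v$ and write $v = s_\beta v'$ with $\ell(v') = \ell(v)-1$, observing that $(u s_\beta)v' = uv$. Then $\mathcal{N}_{\circ}(u)\mathcal{N}_{\circ}(v) = \mathcal{N}_{\circ}(u)\,\sigma_\beta\,\mathcal{N}_{\circ}(v')$, and the factor $\mathcal{N}_{\circ}(u)\sigma_\beta$ equals $\mathcal{N}_{\circ}(u s_\beta)$ when $u\beta > 0$ and equals $\mathcal{N}_{\circ}(u s_\beta)\,\beta^{\vee}(-1)$ when $u\beta < 0$ (in the latter case using $\mathcal{N}_{\circ}(u) = \mathcal{N}_{\circ}(u s_\beta)\sigma_\beta$ together with $\sigma_\beta^{2} = \beta^{\vee}(-1)$); call this correction $\epsilon'$. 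Sliding $\epsilon' \in T$ past $\mathcal{N}_{\circ}(v')$ replaces it by $v'^{-1}(\epsilon')$, and applying the inductive hypothesis to the pair $(u s_\beta, v')$, which is legitimate as $\ell(v') < \ell(v)$, gives
\[
p(u,v) = \Big(\prod_{\gamma \in \mathcal{F}(u s_\beta, v')}\gamma^{\vee}(-1)\Big)\cdot v'^{-1}(\epsilon').
\]

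It then remains to identify the right-hand side with $\prod_{\alpha \in \mathcal{F}(u,v)}\alpha^{\vee}(-1)$, and this combinatorial comparison is where I expect the actual work to lie. The key point is that $s_\beta$ permutes $\Phi^{+}\setminus\{\beta\}$ and flips only the sign of $\pm\beta$, so the defining conditions of $\mathcal{F}(u,v)$ and $\mathcal{F}(u s_\beta, v')$ — which share the \emph{identical} last condition, since $(u s_\beta)(v'\alpha) = uv\alpha$ — agree for every $\alpha$ except the unique positive root $\alpha_0 = v'^{-1}(\beta) = -v^{-1}(\beta)$, positive precisely because $\beta$ is a left descent. A direct check gives $v\alpha_0 = -\beta$, hence $uv\alpha_0 = -u\beta$, so $\alpha_0 \in \mathcal{F}(u,v)$ if and only if $u\beta < 0$, whereas $\alpha_0 \notin \mathcal{F}(u s_\beta, v')$ always. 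Thus the two index sets differ by exactly $\{\alpha_0\}$ when $u\beta < 0$ and coincide otherwise, and since $v'^{-1}(\beta^{\vee}(-1)) = \alpha_0^{\vee}(-1)$ this matches the torus correction $v'^{-1}(\epsilon')$ exactly, closing the induction. The main obstacle is precisely this bookkeeping of sign conditions ensuring that the single discrepancy root $\alpha_0$ lines up with the correction factor; the algebraic manipulations with $\sigma_\beta$ are routine given Lemma \ref{realization}.
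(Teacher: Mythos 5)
Your proof is correct. Note that the paper itself gives no argument for this proposition --- it is simply quoted from Langlands--Shelstad (Lemma 2.1.A of \cite{LS87}) --- and your induction on $\ell(v)$, peeling off a simple left descent $\beta$ of $v$ and absorbing the correction $\sigma_\beta^2=\beta^\vee(-1)$ when $u\beta<0$, is in substance the standard proof of that lemma, with the bookkeeping done correctly: $\alpha_0=v'^{-1}(\beta)=-v^{-1}(\beta)$ is indeed the unique positive root where the defining conditions of $\mathcal{F}(u,v)$ and $\mathcal{F}(us_\beta,v')$ can disagree, and it enters exactly when $u\beta<0$, matching $v'^{-1}(\epsilon')=\alpha_0^\vee(-1)$. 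Your opening remark that $\Pi$ must be read as the set of \emph{positive} roots (not simple roots) is also right and is needed for the statement to hold (e.g.\ for $u=v=w_0$ in type $\A_2$).
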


We now give a formula for computing powers of Tits' lifts.

\begin{definition}
For $w \in W, i \in \mathbb{N}$, we define
\[
\mathcal{F}_w(i) = \{ \alpha \in \Pi \ | \ w^i(\alpha) \in -\Pi, w^{i+1}(\alpha) \in \Pi  \}.
\]
\end{definition}

\begin{proposition}\label{powersformula}
If $w \in W$ and $n \in \mathbb{N}$, then
$$\mathcal{N}_{\circ}(w)^n = \mathcal{N}_{\circ}(w^n) \cdot \displaystyle\prod_{m = 1}^{n-1} \displaystyle\prod_{\alpha \in \mathcal{F}_w(m)} \alpha^{\vee}(-1).$$
\end{proposition}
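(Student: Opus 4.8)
The plan is to argue by induction on $n$, peeling off one Tits lift at a time via Proposition \ref{ahomomorphicity}. The base case $n = 1$ is immediate: the claimed product over $m$ from $1$ to $0$ is empty, and $\mathcal{N}_{\circ}(w)^1 = \mathcal{N}_{\circ}(w^1)$.

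For the inductive step, the crucial decision is on which side to split off a factor. I would write $\mathcal{N}_{\circ}(w)^{n} = \mathcal{N}_{\circ}(w) \cdot \mathcal{N}_{\circ}(w)^{n-1}$, peeling off a factor on the \emph{left}. Applying the inductive hypothesis to $\mathcal{N}_{\circ}(w)^{n-1}$, and then Proposition \ref{ahomomorphicity} to the product $\mathcal{N}_{\circ}(w)\cdot\mathcal{N}_{\circ}(w^{n-1})$ with $u = w$ and $v = w^{n-1}$, yields
$$\mathcal{N}_{\circ}(w)\cdot \mathcal{N}_{\circ}(w^{n-1}) = \mathcal{N}_{\circ}(w^{n}) \cdot \prod_{\alpha \in \mathcal{F}(w, w^{n-1})} \alpha^{\vee}(-1).$$

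The linchpin is the set-theoretic identity $\mathcal{F}(w, w^{m}) = \mathcal{F}_w(m)$, which follows directly from the two definitions: substituting $u = w$ and $v = w^{m}$ into $\mathcal{F}(u,v)$ gives exactly $\{\alpha \in \Pi \mid w^{m}(\alpha) \in -\Pi,\ w^{m+1}(\alpha) \in \Pi\}$, which is precisely $\mathcal{F}_w(m)$. Thus the new torus factor contributed at stage $n$ is $\prod_{\alpha \in \mathcal{F}_w(n-1)} \alpha^{\vee}(-1)$, exactly the missing $m = n-1$ term. Since every factor $\alpha^{\vee}(-1)$ lies in the abelian torus $T$, the accumulated factors commute with one another and may be freely collected into the single double product $\prod_{m=1}^{n-1} \prod_{\alpha \in \mathcal{F}_w(m)} \alpha^{\vee}(-1)$, completing the induction.

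I expect the only real pitfall to be this choice of side: peeling off a factor on the right would instead force an application of Proposition \ref{ahomomorphicity} producing $\mathcal{F}(w^{n-1}, w)$, whose defining condition $w(\alpha) \in -\Pi$ does not match the condition $w^{m}(\alpha) \in -\Pi$ appearing in $\mathcal{F}_w(m)$, so the telescoping would fail to align with the stated answer. Beyond this bookkeeping there is no further obstacle; in particular, no conjugation of $\mathcal{N}_{\circ}(w)$ past torus elements is required, since the torus factors produced at each stage are simply appended on the right and then reordered within $T$.
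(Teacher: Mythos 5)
Your proof is correct and is essentially the paper's own argument: the paper likewise builds up the formula by repeatedly multiplying on the \emph{left} by $\mathcal{N}_{\circ}(w)$ and applying Proposition \ref{ahomomorphicity} with $u = w$, $v = w^m$ (so that $\mathcal{F}(w,w^m) = \mathcal{F}_w(m)$), phrasing as ``continuing in this way'' what you formalize as induction. Your observation about why right-peeling would misalign the index sets is a sound point that the paper leaves implicit, but it does not constitute a different method.
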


\begin{proof}
By Proposition \ref{ahomomorphicity}, $\mathcal{N}_{\circ}(w)^2 = \mathcal{N}_{\circ}(w^2) \cdot \displaystyle\prod_{\alpha \in \mathcal{F}_w(1)} \alpha^{\vee}(-1).$  Multiplying by $\mathcal{N}_{\circ}(w)$ on the left and using Proposition \ref{ahomomorphicity} again, we get $\mathcal{N}_{\circ}(w)^3 = \mathcal{N}_{\circ}(w^3) \cdot \displaystyle\prod_{\alpha \in \mathcal{F}_w(2)} \alpha^{\vee}(-1)  \cdot \displaystyle\prod_{\alpha \in \mathcal{F}_w(1)} \alpha^{\vee}(-1).$ Continuing in this way, the claim follows.
\end{proof}

We proceed to show that an optimal section of $W$ lifts $\mathcal{J}$.  
First, in adjoint types $\E_6, \E_7$, and the isogenies of type $\D_n$ given by the fundamental coweights $\omega_n$, $\omega_{n-1}$, ($n$ even), we have shown in \S\ref{D_n}, \S\ref{E_6}, \S\ref{E_7} that the Tits section is optimal.  Moreover, we have shown in \cite{Adr18} that the Tits section lifts $\mathcal{J}$.  So we do not need to consider these cases.

It remains to check that the optimal section $\mathcal{S}$ lifts $\mathcal{J}$ in the cases of non-simply connected type $\A_n$, adjoint types $\B_n, \C_n, \D_n$, and the isogeny of 
$\D_n$ corresponding to the fundamental coweight $\omega_1$. 

We note that in the cases of non-simply connected non-adjoint type $\A_n$ with $n+1 = ab$ and $a$ is odd, as well as the cases of adjoint type $\A_n$, adjoint type $\B_n$, the isogeny of type $\D_n$ corresponding to the fundamental coweight $\omega_1$, and adjoint type $\D_n$, we have shown that the optimal section is a homomorphism, so that these cases also need not be considered.

In type $\A_n$ with $n+1 = ab$ and $a$ even, we have also shown in \S\ref{A_n} that the Tits section is optimal.  However, in \cite{Adr18}, we did not show that $\mathcal{N}_{\circ}$ lifts $\mathcal{J}$ (as we were able to prove the result that we needed in loc. cit. in a slightly different way), so we must consider this case below.

So it remains to check that an optimal section, which we denote $\mathcal{S}$, lifts $\mathcal{J}$ in the cases of non-simply connected non-adjoint type $\A_n$ with $n+1 = ab$ and $a$ even, and adjoint type $\C_n$.  We note that the Tits section in adjoint type $\C_n$ is not optimal, as shown in \S\ref{C_n}.

\subsection{Remaining $\A_n$ types} We now consider the group of type $\A_n$, non-simply connected, non-adjoint, $n + 1 = ab$ with $a$ even. We must check that if $w$ generates $\mathcal{J}$, that $\mathcal{S}(w)^r = \mathcal{S}(w^r)$ for all $r$.  But as we showed in \S\ref{nonsimpnonadjAn}, if $a$ is even, then the Tits section is optimal, so we may set $\mathcal{S} = \mathcal{N}_{\circ}$.

\begin{proposition}
If $w$ generates $\mathcal{J}$, then $\mathcal{N}_{\circ}(w)^r = \mathcal{N}_{\circ}(w^r) \ \forall r \in \mathbb{N}$.
\end{proposition}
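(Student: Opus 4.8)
The plan is to work entirely in the permutation model $W = S_{n+1}$ and reduce the claim to a parity count of coroots. First I would pin down the element: writing $c$ for the $(n+1)$-cycle $e_i \mapsto e_{i+1}$ (indices mod $n+1$), the length-zero element of $\Omega$ in the class of $\omega_a$ is $t_{\omega_a}c^{a}$, so the canonical generator of $\mathcal{J} \cong \Z/b\Z$ is $w = c^{a}$, an element of order $b$ that is a product of $a$ disjoint $b$-cycles. Since $\mathcal{N}_{\circ}$ restricted to a cyclic group is a homomorphism as soon as it is multiplicative on one generator, proving the identity for this particular $w$ will give it for $\mathcal{J}$, and hence for every generator.

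By Proposition \ref{powersformula} we have $\mathcal{N}_{\circ}(w)^{r} = \mathcal{N}_{\circ}(w^{r}) \cdot \prod_{m=1}^{r-1}\prod_{\alpha \in \mathcal{F}_w(m)}\alpha^{\vee}(-1)$, so it suffices to prove the stronger, $r$-free statement that each inner factor is trivial, i.e. $\prod_{\alpha \in \mathcal{F}_w(m)}\alpha^{\vee}(-1)=1$ for every $m$. Because $X_*(T)$ is free and $-1$ is $2$-torsion in $F^{\times}$, this is equivalent to $\sum_{\alpha \in \mathcal{F}_w(m)}\alpha^{\vee} \in 2X_*(T)$. I want to stress that reducing to a single $m$ at a time, rather than only to a full period, is the conceptual heart of the argument: it is exactly this per-$m$ vanishing that the hypothesis ``$a$ even'' will deliver.

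Next I would give a combinatorial description of $\mathcal{F}_w(m)$. The key elementary fact is that $c^{s}(e_i-e_j)$ is a \emph{negative} root precisely when the cut point $\overline{-s}$ (the residue of $-s$ in $\{1,\dots,n+1\}$) lies in $[i,j)=\{i,\dots,j-1\}$. Applying this to $w^{m}=c^{am}$ and $w^{m+1}=c^{a(m+1)}$, and setting $u=\overline{-am}$ (always a multiple of $a$, as $\gcd(a,n+1)=a$), the two sign conditions collapse to the rectangular form $\mathcal{F}_w(m)=\{\,e_i-e_j : u-a+1 \le i \le u < j \le n+1\,\}$, with $\mathcal{F}_w(m)=\varnothing$ when $b\mid m$. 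Writing $u=\kappa a$, so $n+1-u=a(b-\kappa)$, I would expand $\sum_{\alpha}\alpha^{\vee}=\sum_{\ell}c_\ell\,\alpha_\ell^{\vee}$ via $(e_i-e_j)^{\vee}=\alpha_i^{\vee}+\cdots+\alpha_{j-1}^{\vee}$ and observe that each coefficient is $c_\ell=(\ell-u+a)(n+1-u)$ for $\ell\le u$ and $c_\ell=a(n+1-\ell)$ for $\ell\ge u$; every such $c_\ell$ carries a factor of $a$ or of $n+1-u$, both even since $a$ is even. Hence $\sum_{\alpha}\alpha^{\vee}\in 2Q^{\vee}\subseteq 2X_*(T)$, which yields $\prod_{\alpha\in\mathcal{F}_w(m)}\alpha^{\vee}(-1)=1$ and the proposition.

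The main obstacle is the middle step: correctly identifying $w=c^{a}$ and extracting the closed rectangular form for $\mathcal{F}_w(m)$ from the cut-point criterion; once that form is in hand, the parity check is immediate and transparently uses $a$ even. I would also verify the boundary value $u=a$ (where the cut point of $w^{m+1}$ degenerates, since $w^{m+1}$ becomes the identity), checking that $\mathcal{F}_w(m)$ is then the inversion set of $w^{-1}$ and that the rectangular description and the parity argument persist. Finally, it is worth noting that for $a$ odd the coefficients $c_\ell$ indexed near multiples of $a$ become odd, so $\mathcal{N}_{\circ}$ genuinely fails to lift $\mathcal{J}$ in that case, which is precisely why an optimal section is needed there instead.
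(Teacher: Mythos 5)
Your proposal is correct and follows essentially the same route as the paper: identify the generator of $\mathcal{J}$ as the $a$-th power of the long cycle, invoke Proposition \ref{powersformula} to reduce the claim to the vanishing of the sign factors, and then compute $\sum_{\alpha \in \mathcal{F}_w(m)} \alpha^{\vee}$ explicitly to see that every coefficient is a multiple of $a$, hence even, so the sum lies in $2X_*(T)$ and each product $\prod_{\alpha \in \mathcal{F}_w(m)} \alpha^{\vee}(-1)$ is trivial. The only difference is presentational: you expand in simple coroots via the cut-point criterion and treat the boundary case separately, whereas the paper records the same sum in the $e_i$-coordinates and leaves the combinatorics as ``a computation shows.''
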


\begin{proof}
The generator $w$ of $\mathcal{J}$ is the $a^{\mathrm{th}}$ power of the $n$-cycle $(1 \ 2 \ 3 \ \cdots \ n)$.  Denote this $a^{\mathrm{th}}$ power by $w_a$.  We need to show that $\mathcal{N}_{\circ}(w_a)^r = \mathcal{N}_{\circ}(w_a^r) \ \forall r \in \mathbb{N}$.  
By Proposition \ref{powersformula}, it is equivalent to show that 
\[
\displaystyle\prod_{i = 1}^{r-1} \displaystyle\prod_{\alpha \in \mathcal{F}_{w_a}(i)} \alpha^{\vee}(-1) = 1
\]
where $\mathcal{F}_{w_a}(i) = \{ \alpha \in \Pi \ | \ w_a^i(\alpha) \in -\Pi, w_a^{i+1}(\alpha) \in \Pi  \}$.  But a computation shows that
\[
\displaystyle\sum_{\alpha \in \mathcal{F}_{w_a}(i)} \alpha^{\vee} = ai (e_{n-a(i+1)+1} + e_{n-a(i+1)+2} + \cdots e_{n-ai}) - a(e_{n-ai+1} + e_{n-ai+2} + \cdots + e_n).
\]
Notice that the coefficients of every $e$ term is even, since $a$ is even.  Summing now from $i = 1$ to $i = r-1$, we obtain a summation of $e$ terms, each of whose coefficient is even.  In particular, this sum is twice a cocharacter, so it's evaluation on $-1$ is trivial, giving us the result that we seek.

\end{proof}

\subsection{Adjoint type $\C_n$} Consider the case of adjoint type $\C_n$, let $\mathcal{S}$ be any optimal section as in \S\ref{C_n}, and recall that $\omega_1, ..., \omega_n$ denote the fundamental coweights of type $\C_n$ as in \cite{Bou02}.  In particular $\mathcal{S}(s_{\alpha})^2 = 1$ for all simple roots $\alpha$ unless $\alpha = 2e_n$, in which case a computation shows that $\mathcal{S}(s_{2e_n})^2 = \alpha_n^{\vee}(-1) =  \omega_{n-1}(-1)$.  We have that $\mathcal{J}$ is of order $2$ and is generated by the Weyl element $w_{\Pi_n} w_{\Pi}$ (see  \cite[Proposition 1.18]{IM65}).  Here, $w_{\Pi_n}$ is the long element (from the Weyl group of type $\A_{n-1}$) obtained from removing the node $\alpha_n$ from the Dynkin diagram of $\C_n$, and $w_{\Pi}$ is the long element from type $\C_n$.  In other words,
\[
w_{\Pi_n} = s_1 s_2 s_1 s_3 s_2 s_1 s_4 s_3 s_2 s_1 \cdots s_{n-1} s_{n-2} \cdots s_2 s_1 \ \ \ \ \mathrm{and} \ \ \ \ \ \ w_{\Pi} = (s_1 s_2 \cdots s_n)^n.
\]
\begin{proposition}
Let $w = w_{\Pi_n} w_{\Pi}$.  Then $\mathcal{S}(w)^2 = 1$.
\end{proposition}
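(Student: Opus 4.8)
The plan is to reduce the identity $\mathcal{S}(w)^2=1$ to the single statement $\mathcal{S}(w_\Pi)^2=1$ for the long element $w_\Pi=w_0$ of $\C_n$, and then to compute that square using the fact that $w_0=-1$ is central and inverts $T$. First I would record that, for an optimal section $\mathcal{S}$ in adjoint type $\C_n$, the lifts $\mathcal{S}(s_1),\dots,\mathcal{S}(s_{n-1})$ each have order two (shown in \S\ref{C_n}). Since these lifts also satisfy the braid relations, they satisfy all Coxeter relations of the parabolic subgroup $W_{\Pi_n}=\langle s_1,\dots,s_{n-1}\rangle$ of type $\A_{n-1}$, so $\mathcal{S}$ restricts to a genuine homomorphism on $W_{\Pi_n}$. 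In particular $A:=\mathcal{S}(w_{\Pi_n})$ satisfies $A^2=\mathcal{S}(w_{\Pi_n}^2)=1$.

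Next I would exploit length-additivity. As $w_0$ is the longest element, $\ell(w_{\Pi_n}w_0)=\ell(w_0)-\ell(w_{\Pi_n})$, so $\ell(w_0)=\ell(w_{\Pi_n})+\ell(w)$; and because $w_0$ is central, the two factorizations $w_0=w_{\Pi_n}\cdot w$ and $w_0=w\cdot w_{\Pi_n}$ are both length-additive. Applying $\mathcal{S}$ multiplicatively along a reduced word realizing each factorization (Proposition \ref{braidrelations}) yields $\mathcal{S}(w_0)=A\,\mathcal{S}(w)$ and $\mathcal{S}(w_0)=\mathcal{S}(w)\,A$. Hence $A$ commutes with $\mathcal{S}(w)$, and therefore with $B:=\mathcal{S}(w_0)=A\,\mathcal{S}(w)$, while $\mathcal{S}(w)=A^{-1}B$. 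Using $A^2=1$ and $AB=BA$, I get $\mathcal{S}(w)^2=(A^{-1}B)^2=A^{-2}B^2=B^2=\mathcal{S}(w_0)^2$, reducing the claim to $\mathcal{S}(w_0)^2=1$.

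Finally I would compute $\mathcal{S}(w_0)^2$. Writing $\mathcal{S}(w_0)=\tau\,\mathcal{N}_{\circ}(w_0)$ with $\tau\in T$ and using $w_0(\tau)=\tau^{-1}$ gives $\mathcal{S}(w_0)^2=\tau\,w_0(\tau)\,\mathcal{N}_{\circ}(w_0)^2=\mathcal{N}_{\circ}(w_0)^2$, so the square is independent of the chosen lift. By Proposition \ref{powersformula}, $\mathcal{N}_{\circ}(w_0)^2=\prod_{\alpha\in\mathcal{F}_{w_0}(1)}\alpha^\vee(-1)$, and since $w_0$ carries every positive root to a negative root, $\mathcal{F}_{w_0}(1)$ is the set of all positive roots. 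Therefore $\mathcal{N}_{\circ}(w_0)^2=\prod_{\alpha>0}\alpha^\vee(-1)=\big(\textstyle\sum_{\alpha>0}\alpha^\vee\big)(-1)=(2\rho^\vee)(-1)=\rho^\vee(1)=1$, whence $\mathcal{S}(w_0)^2=1$ and $\mathcal{S}(w)^2=1$.

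The crucial, genuinely type-specific point is this last vanishing: $2\rho^\vee=\sum_{\alpha>0}\alpha^\vee$ is twice the coweight $\rho^\vee$, which is integral in the cocharacter lattice $X_*(T)=P^\vee$ of the \emph{adjoint} group, so $(2\rho^\vee)(-1)=\rho^\vee((-1)^2)=1$. This is exactly where adjointness enters, and is the step I would be most careful about (in the simply connected case $\rho^\vee$ need not lie in $Q^\vee$, and the analogous square can be nontrivial). I would also double-check the bookkeeping that both $w_0=w_{\Pi_n}w$ and $w_0=w\,w_{\Pi_n}$ are honestly length-additive, since the commutation of $A$ and $B$, and hence the entire reduction, rests on being able to apply $\mathcal{S}$ multiplicatively to both.
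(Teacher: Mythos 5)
Your proof is correct, but it takes a genuinely different route from the paper's. The paper argues by brute force inside the coordinates of \S\ref{C_n}: it writes $w$ in the reduced expression $s_n s_{n-1} s_n s_{n-2} s_{n-1} s_n \cdots s_1 s_2 \cdots s_n$, applies $\mathcal{S}$, squares, reshuffles with the braid relations, and then collapses the product telescopically via identities of the form $r_k r_{k+1} \cdots r_n r_n r_{n-1} \cdots r_k = \omega_{k-1}(-1)\,\omega_k(-1)$, finishing with $r_n^2 = \omega_{n-1}(-1)$. You instead make three structural moves: (i) since the lifts $\mathcal{S}(s_1), \dots, \mathcal{S}(s_{n-1})$ are involutions satisfying the braid relations, they satisfy the full Coxeter presentation of $W_{\Pi_n} \cong S_n$, so $\mathcal{S}$ restricts to a homomorphism there and $A = \mathcal{S}(w_{\Pi_n})$ is an involution; (ii) the length-additive factorizations $w_\Pi = w_{\Pi_n} \cdot w = w \cdot w_{\Pi_n}$ (valid because $w_\Pi$ is the longest element and is central in type $\C_n$) let you apply Proposition \ref{braidrelations} multiplicatively, reducing everything to $\mathcal{S}(w_\Pi)^2 = 1$; (iii) since $w_\Pi$ inverts $T$, the square of \emph{any} lift of $w_\Pi$ equals $\mathcal{N}_{\circ}(w_\Pi)^2$, which by Proposition \ref{powersformula} is $\prod_{\alpha > 0} \alpha^{\vee}(-1) = (2\rho^{\vee})(-1) = 1$ because $\rho^{\vee}$ lies in the cocharacter lattice of the adjoint group. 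Each of these steps checks out (in particular, reduced words in the parabolic are reduced in $W$, so the Coxeter-presentation homomorphism really is the restriction of $\mathcal{S}$; and $\mathcal{F}_{w_\Pi}(1)$ is indeed the full set of positive roots). What your approach buys is conceptual clarity and generality: it avoids producing and manipulating a reduced expression for $w$, isolates exactly where optimality enters (only through the involutivity of the type-$\A_{n-1}$ lifts) and where adjointness enters ($\rho^{\vee} \in X_*(T)$), and the same argument would apply in any type where $w_\Pi$ acts by $-1$ and the relevant parabolic lifts are involutions. What the paper's computation buys is self-containedness within the explicit torus coordinates already established in \S\ref{C_n}, requiring no auxiliary facts about longest elements, parabolic subgroups, or $\rho^{\vee}$.
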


\begin{proof}
  Let us write $\mathcal{S}(s_i) = r_i$.  To compute $\mathcal{S}(w)$, we must first write $w$ as a reduced expression (see Proposition \ref{braidrelations}).  One may compute that
\[
w = w_{\Pi_l} w_{\Pi} = s_1 s_2 s_1 s_3 s_2 s_1 s_4 s_3 s_2 s_1 \cdots s_{n-1} s_{n-2} \cdots s_2 s_1 (s_1 s_2 \cdots s_n)^n
\]
\[
= s_n s_{n-1} s_n s_{n-2} s_{n-1} s_n s_{n-3} s_{n-2} s_{n-1} s_n \cdots s_1 s_2 s_3 \cdots s_n.
\]
and this last expression is a reduced expression for $w$.  Applying $\mathcal{S}$ to this reduced expression and then squaring the result, we get
\[
 r_n r_{n-1} r_n r_{n-2} r_{n-1} r_n r_{n-3} r_{n-2} r_{n-1} r_n \cdots r_1 r_2 r_3 \cdots r_n  r_n r_{n-1} r_n r_{n-2} r_{n-1} r_n r_{n-3} r_{n-2} r_{n-1} r_n \cdots r_1 r_2 r_3 \cdots r_n
\]
Because of the braid relations, we may rewrite the above product as
\[
r_n r_{n-1} r_n r_{n-2} r_{n-1} r_n r_{n-3} r_{n-2} r_{n-1} r_n \cdots r_1 r_2 r_3 \cdots r_n r_n r_{n-1} \cdots r_1 r_n r_{n-1} \cdots r_2 r_n r_{n-1} \cdots r_3 \cdots r_n r_{n-1} r_n.
\]
We now compute that  
\begin{align*}
& r_1 r_2 r_3 \cdots r_{n-1} r_n r_n r_{n-1} \cdots r_1 = r_1 r_2 r_3 \cdots r_{n-1} \omega_{n-1}(-1) r_{n-1} r_{n-2} \cdots r_1\\
& = r_1 r_2 \cdots r_{n-2} \omega_{n-2}(-1) \omega_{n-1}(-1) r_{n-2} r_{n-3} \cdots r_1\\
& = r_1 r_2 \cdots r_{n-3} \omega_{n-3}(-1) \omega_{n-2}(-1) r_{n-3} r_{n-4} \cdots r_1\\
& = ...\\
& = \omega_1(-1),
\end{align*}
and one may conclude, using the braid relations, that
\[
\mathcal{S}(w)^2 = \omega_1(-1) r_n r_{n-1} r_n r_{n-2} r_{n-1} r_n r_{n-3} r_{n-2} r_{n-1} r_n \cdots r_2 r_3 \cdots r_n r_n r_{n-1} \cdots r_2 r_n r_{n-1} \cdots r_3 \cdots r_n r_{n-1} r_n
\]
Similarly, 
\[
r_2 r_3 \cdots r_n r_n r_{n-1} \cdots r_2 = \omega_1(-1) \omega_2(-1),
\]
giving us
\[
\mathcal{S}(w)^2 = \omega_1(-1) \omega_1(-1) \omega_2(-1) r_n r_{n-1} r_n r_{n-2} r_{n-1} r_n r_{n-3} r_{n-2} r_{n-1} r_n \cdots r_3 \cdots r_n r_n r_{n-1} \cdots r_3 r_n r_{n-1} \cdots r_3 \cdots r_n r_{n-1} r_n
\]
Similarly,
\[
r_3 r_4 \cdots r_n r_n r_{n-1} \cdots r_3 = \omega_2(-1) \omega_3(-1)
\]
\[
r_4 r_5 \cdots r_n r_n r_{n-1} \cdots r_4 = \omega_3(-1) \omega_4(-1)
\]
and so on.  In the end, we obtain $\mathcal{S}(w)^2 = 1$, using the fact that $r_n^2 = \omega_{n-1}(-1)$.
\end{proof}

We now state the main result.

\begin{theorem}\label{kottwitz}
Let $G$ be a split, almost-simple, connected reductive $p$-adic group, and $\mathcal{S}$ an optimal section.  The map 
$
(\epsilon_i, w_i) \mapsto \epsilon_i(\varpi^{-1}) \mathcal{S}(w_i)
$
 is a homomorphic section of the Kottwitz homomorphism $\kappa_G : G(F) \rightarrow \Omega$.
\end{theorem}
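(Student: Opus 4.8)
The plan is to peel off two formal facts and reduce everything to the type-by-type statement, already verified above, that an optimal section $\mathcal{S}$ restricts to a homomorphism on $\mathcal{J}$. First I would recall the structure of $\Omega$: the chosen representatives $\epsilon_i \rtimes w_i$ are the length-zero (base-alcove-stabilizing) elements of $W^{\mathrm{ext}} = X_*(T) \rtimes W$, and these form a genuine subgroup mapping isomorphically onto $W^{\mathrm{ext}}/W^{\mathrm{aff}} = \Omega$. Hence the product of two representatives is computed on the nose inside $W^{\mathrm{ext}}$: if $(\epsilon_i \rtimes w_i)(\epsilon_j \rtimes w_j) = \epsilon_k \rtimes w_k$ in $\Omega$, then $\epsilon_k = \epsilon_i + w_i(\epsilon_j)$ and $w_k = w_i w_j$ exactly.

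Next I would dispose of the section property $\kappa_G \circ \iota = \mathrm{id}_\Omega$. As $\iota$ takes values in $N_G(T)$ and $\kappa_G|_{N_G(T)}$ factors through $N_G(T)/T_{\circ} = W^{\mathrm{ext}}$, it is enough to identify the image of $\epsilon_i(\varpi^{-1})\,\mathcal{S}(w_i)$ there. Taking (as the constructions above allow) the optimal section $\mathcal{S}$ with root-of-unity parameters, each defining torus element $t_{\alpha}$ has unit entries, so $\mathcal{S}(w_i)$ lies in $N_G(T) \cap G(\mathcal{O})$ and has the same image in $W^{\mathrm{ext}}$ as $\mathcal{N}_{\circ}(w_i)$, namely $w_i$ with trivial cocharacter part. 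Since $\epsilon_i(\varpi^{-1})$ contributes $\epsilon_i$, the section property follows exactly as it did for $\mathcal{N}_{\circ}$ in \cite{Adr18}.

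The homomorphism property is then a direct computation. Conjugating through $W$ via $\mathcal{S}(w_i)\,\epsilon_j(\varpi^{-1})\,\mathcal{S}(w_i)^{-1} = (w_i\epsilon_j)(\varpi^{-1})$ gives
\[
\iota(\epsilon_i \rtimes w_i)\,\iota(\epsilon_j \rtimes w_j) = (\epsilon_i + w_i\epsilon_j)(\varpi^{-1})\,\mathcal{S}(w_i)\mathcal{S}(w_j) = \epsilon_k(\varpi^{-1})\,\mathcal{S}(w_i)\mathcal{S}(w_j),
\]
while $\iota(\epsilon_k \rtimes w_k) = \epsilon_k(\varpi^{-1})\,\mathcal{S}(w_iw_j)$. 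Thus $\iota$ is multiplicative if and only if $\mathcal{S}(w_i)\mathcal{S}(w_j) = \mathcal{S}(w_iw_j)$ for all $i,j$; as the $w_i$ are closed under multiplication and generate $\mathcal{J}$, this is exactly the assertion that $\mathcal{S}$ lifts $\mathcal{J}$.

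Finally I would assemble the case analysis of \S\ref{application}. In each type one of three things holds: the optimal section is a homomorphism on all of $W$, so it lifts $\mathcal{J}$ trivially; or Tits' section is itself optimal and was shown in \cite{Adr18} to lift $\mathcal{J}$ (adjoint $\E_6$, $\E_7$, and the $\omega_{n-1}$, $\omega_n$ isogenies of $\D_n$ with $n$ even); or the type is one of the two genuinely new cases — non-simply-connected non-adjoint $\A_n$ with $a$ even, and adjoint $\C_n$ — handled by the two propositions just proved. I do not expect the reduction itself to be the obstacle, since it is formal; the real work lies in those two computations, namely the parity argument in $\A_n$ forcing $\prod \alpha^{\vee}(-1) = 1$ because $a$ is even, and the telescoping of the long braid word in $\C_n$ that collapses $\mathcal{S}(w)^2$ to a product of $\omega_j(-1)$ terms equal to the identity.
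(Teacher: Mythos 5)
Your proposal is correct and takes essentially the same route as the paper: both reduce the theorem to the assertion that an optimal section restricts to a homomorphism on $\mathcal{J}$, and then dispose of every type via the same trichotomy (optimal sections that are homomorphisms on all of $W$; types where Tits' section is optimal and \cite{Adr18} already shows it lifts $\mathcal{J}$, namely adjoint $\E_6$, $\E_7$ and the $\omega_{n-1}$, $\omega_n$ isogenies of $\D_n$; and the two new computations for non-simply-connected non-adjoint $\A_n$ with $a$ even and for adjoint $\C_n$). The only difference is presentational: you inline the formal reduction (the subgroup structure of the representatives $\epsilon_i \rtimes w_i$, the conjugation computation, and the unit-parameter check for the section property), which the paper instead imports from \cite{Adr18}.
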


\section{Summary of results}\label{summary}

In this section, we present a table which exhibits all order profiles of sections of the Weyl group.  We assume in this section that $n \geq 6$.  For each type, we list the possibilities for the order profiles in terms of what the orders of the lifts of simple reflections are.  We remind the reader that to each section, we associate a labeled Dynkin diagram.

\[
 \begin{tabu}{||c  c||} 
 \hline
\mathrm{Type}   & \mathrm{Order \ profiles \ of \ sections}    \\ [0.5ex] 
 \hline\hline
\A_n \ \mathrm{simply \ connected}, \ n \ \mathrm{odd}   & \begin{tikzpicture}
\dynkin{A}{ooo.o};
\dynkinLabelRoot{1}{4j}
\dynkinLabelRoot{2}{4j}
\dynkinLabelRoot{3}{4j}
\dynkinLabelRoot{4}{4j}
\end{tikzpicture}    \\ 
 &   j \geq 1      \\ 
 \hline
\A_n \ \mathrm{simply \ connected}, \ n \ \mathrm{even}   & \begin{tikzpicture}
\dynkin{A}{ooo.o};
\dynkinLabelRoot{1}{2j}
\dynkinLabelRoot{2}{2j}
\dynkinLabelRoot{3}{2j}
\dynkinLabelRoot{4}{2j}
\end{tikzpicture}   \\ 
 &   j \geq 1       \\ 
 \hline
\A_n, \ \mathrm{with} \ 1 < a := \# Z(G) < n+1, a \ \mathrm{even} & \begin{tikzpicture}
\dynkin{A}{ooo.o};
\dynkinLabelRoot{1}{4j}
\dynkinLabelRoot{2}{4j}
\dynkinLabelRoot{3}{4j}
\dynkinLabelRoot{4}{4j}
\end{tikzpicture}  \\ 
 &   j \geq 1   
   \\ 
    \hline
\A_n, \ \mathrm{with} \ 1 < a := \# Z(G) < n+1, a \ \mathrm{odd}  & \begin{tikzpicture}
\dynkin{A}{ooo.o};
\dynkinLabelRoot{1}{2j}
\dynkinLabelRoot{2}{2j}
\dynkinLabelRoot{3}{2j}
\dynkinLabelRoot{4}{2j}
\end{tikzpicture}  \\ 
 &   j \geq 1   
   \\ 
 \hline
\A_n \ \mathrm{adjoint}  & \begin{tikzpicture}
\dynkin{A}{ooo.o};
\dynkinLabelRoot{1}{2j}
\dynkinLabelRoot{2}{2j}
\dynkinLabelRoot{3}{2j}
\dynkinLabelRoot{4}{2j}
\end{tikzpicture}    \\ 
 &   j \geq 1     \\ 
 \hline
\B_n \ \mathrm{simply \ connected}, \ n \ \mathrm{even}  & \begin{tikzpicture}
\dynkin{B}{ooo.oo}
\dynkinLabelRoot{1}{4}
\dynkinLabelRoot{2}{4}
\dynkinLabelRoot{3}{4}
\dynkinLabelRoot{4}{4}
\dynkinLabelRoot{5}{2}
\end{tikzpicture}   \\
 &   \begin{tikzpicture}
\dynkin{B}{ooo.oo}
\dynkinLabelRoot{1}{4}
\dynkinLabelRoot{2}{4}
\dynkinLabelRoot{3}{4}
\dynkinLabelRoot{4}{4}
\dynkinLabelRoot{5}{4}
\end{tikzpicture}       \\ 
 \hline
 \B_n \ \mathrm{simply \ connected}, \ n \ \mathrm{odd}  & \begin{tikzpicture}
\dynkin{B}{ooo.oo}
\dynkinLabelRoot{1}{4}
\dynkinLabelRoot{2}{4}
\dynkinLabelRoot{3}{4}
\dynkinLabelRoot{4}{4}
\dynkinLabelRoot{5}{4}
\end{tikzpicture}   \\
 \hline
\B_n \ \mathrm{adjoint}  & \begin{tikzpicture}
\dynkin{B}{ooo.oo}
\dynkinLabelRoot{1}{2}
\dynkinLabelRoot{2}{2}
\dynkinLabelRoot{3}{2}
\dynkinLabelRoot{4}{2}
\dynkinLabelRoot{5}{2}
\end{tikzpicture}  \\
&  \begin{tikzpicture}
\dynkin{B}{ooo.oo}
\dynkinLabelRoot{1}{4}
\dynkinLabelRoot{2}{4}
\dynkinLabelRoot{3}{4}
\dynkinLabelRoot{4}{4}
\dynkinLabelRoot{5}{2}
\end{tikzpicture}   \\
 \hline
 \C_n \ \mathrm{simply \ connected}  & \begin{tikzpicture}
\dynkin{C}{ooo.oo}
\dynkinLabelRoot{1}{2}
\dynkinLabelRoot{2}{2}
\dynkinLabelRoot{3}{2}
\dynkinLabelRoot{4}{2}
\dynkinLabelRoot{5}{4}
\end{tikzpicture}   \\
 &   \begin{tikzpicture}
\dynkin{C}{ooo.oo}
\dynkinLabelRoot{1}{4}
\dynkinLabelRoot{2}{4}
\dynkinLabelRoot{3}{4}
\dynkinLabelRoot{4}{4}
\dynkinLabelRoot{5}{4}
\end{tikzpicture}       \\ 
 \hline
 \C_n \ \mathrm{adjoint}  & \begin{tikzpicture}
\dynkin{C}{ooo.oo}
\dynkinLabelRoot{1}{2}
\dynkinLabelRoot{2}{2}
\dynkinLabelRoot{3}{2}
\dynkinLabelRoot{4}{2}
\dynkinLabelRoot{5}{4}
\end{tikzpicture}   \\
 &   \begin{tikzpicture}
\dynkin{C}{ooo.oo}
\dynkinLabelRoot{1}{4}
\dynkinLabelRoot{2}{4}
\dynkinLabelRoot{3}{4}
\dynkinLabelRoot{4}{4}
\dynkinLabelRoot{5}{4}
\end{tikzpicture}    \\ 
 \hline

 \D_n \ \mathrm{simply \ connected}  & \begin{tikzpicture}
\dynkin{D}{oo.ooo}
\dynkinLabelRoot{1}{4}
\dynkinLabelRoot{2}{4}
\dynkinLabelRoot{3}{4}
\dynkinLabelRoot{4}{4}
\dynkinLabelRoot{5}{4}
\end{tikzpicture}   \\
 \hline
 
 \D_n, \ \mathrm{fundamental \ coweight} \ \omega_1  & \begin{tikzpicture}
\dynkin{D}{oo.ooo}
\dynkinLabelRoot{1}{2}
\dynkinLabelRoot{2}{2}
\dynkinLabelRoot{3}{2}
\dynkinLabelRoot{4}{2}
\dynkinLabelRoot{5}{2}
\end{tikzpicture}   \\
&  \begin{tikzpicture}
\dynkin{D}{oo.ooo}
\dynkinLabelRoot{1}{4}
\dynkinLabelRoot{2}{4}
\dynkinLabelRoot{3}{4}
\dynkinLabelRoot{4}{4}
\dynkinLabelRoot{5}{4}
\end{tikzpicture}    \\
 \hline
 \D_n, n \ \mathrm{even}, \ \mathrm{fundamental \ coweight} \ \omega_{n-1}  & \begin{tikzpicture}
\dynkin{D}{oo.ooo}
\dynkinLabelRoot{1}{4}
\dynkinLabelRoot{2}{4}
\dynkinLabelRoot{3}{4}
\dynkinLabelRoot{4}{4}
\dynkinLabelRoot{5}{4}
\end{tikzpicture}     \\
 \hline
 \D_n, n \ \mathrm{even}, \ \mathrm{fundamental \ coweight} \ \omega_n   & \begin{tikzpicture}
\dynkin{D}{oo.ooo}
\dynkinLabelRoot{1}{4}
\dynkinLabelRoot{2}{4}
\dynkinLabelRoot{3}{4}
\dynkinLabelRoot{4}{4}
\dynkinLabelRoot{5}{4}
\end{tikzpicture}     \\
 \hline
 \D_n \ \mathrm{adjoint} & \begin{tikzpicture}
\dynkin{D}{oo.ooo}
\dynkinLabelRoot{1}{2}
\dynkinLabelRoot{2}{2}
\dynkinLabelRoot{3}{2}
\dynkinLabelRoot{4}{2}
\dynkinLabelRoot{5}{2}
\end{tikzpicture}   \\
&  \begin{tikzpicture}
\dynkin{D}{oo.ooo}
\dynkinLabelRoot{1}{4}
\dynkinLabelRoot{2}{4}
\dynkinLabelRoot{3}{4}
\dynkinLabelRoot{4}{4}
\dynkinLabelRoot{5}{4}
\end{tikzpicture}      \\
 \hline
 \end{tabu}
 \]
 
 \[
 \begin{tabu}{||c   c||} 
 \hline
\mathrm{Type}  & \mathrm{Order \ profiles \ of \ sections}    \\ [0.5ex] 
 \hline\hline
 \F_4  & \begin{tikzpicture}
\dynkin{F}{oooo}
\dynkinLabelRoot{1}{4}
\dynkinLabelRoot{2}{4}
\dynkinLabelRoot{3}{2}
\dynkinLabelRoot{4}{2}
\end{tikzpicture}   \\
&  \begin{tikzpicture}
\dynkin{F}{oooo}
\dynkinLabelRoot{1}{4}
\dynkinLabelRoot{2}{4}
\dynkinLabelRoot{3}{4}
\dynkinLabelRoot{4}{4}
\end{tikzpicture}   \\
 \hline
  \G_2   & \begin{tikzpicture}
\dynkin{G}{oo}
\dynkinLabelRoot{1}{2}
\dynkinLabelRoot{2}{2}
\end{tikzpicture}   \\
&  \begin{tikzpicture}
\dynkin{G}{oo}
\dynkinLabelRoot{1}{2}
\dynkinLabelRoot{2}{4}
\end{tikzpicture}   \\
&  \begin{tikzpicture}
\dynkin{G}{oo}
\dynkinLabelRoot{1}{4}
\dynkinLabelRoot{2}{2}
\end{tikzpicture}   \\
&  \begin{tikzpicture}
\dynkin{G}{oo}
\dynkinLabelRoot{1}{4}
\dynkinLabelRoot{2}{4}
\end{tikzpicture}   \\
 \hline
  \E_8 &  \begin{tikzpicture}
\dynkin{E}{oooooooo}
\dynkinLabelRoot{1}{4}
\dynkinLabelRoot{2}{4}
\dynkinLabelRoot{3}{4}
\dynkinLabelRoot{4}{4}
\dynkinLabelRoot{5}{4}
\dynkinLabelRoot{6}{4}
\dynkinLabelRoot{7}{4}
\dynkinLabelRoot{8}{4}
\end{tikzpicture}  \\
 \hline
  \E_7 \ \mathrm{simply \ connected}  & \begin{tikzpicture}
\dynkin{E}{ooooooo}
\dynkinLabelRoot{1}{4}
\dynkinLabelRoot{2}{4}
\dynkinLabelRoot{3}{4}
\dynkinLabelRoot{4}{4}
\dynkinLabelRoot{5}{4}
\dynkinLabelRoot{6}{4}
\dynkinLabelRoot{7}{4}
\end{tikzpicture}  \\
 \hline
  \E_7 \ \mathrm{adjoint}  & \begin{tikzpicture}
\dynkin{E}{ooooooo}
\dynkinLabelRoot{1}{4}
\dynkinLabelRoot{2}{4}
\dynkinLabelRoot{3}{4}
\dynkinLabelRoot{4}{4}
\dynkinLabelRoot{5}{4}
\dynkinLabelRoot{6}{4}
\dynkinLabelRoot{7}{4}
\end{tikzpicture}  \\
 \hline
 \E_6 \ \mathrm{simply \ connected}  & \begin{tikzpicture}
\dynkin{E}{oooooo}
\dynkinLabelRoot{1}{4}
\dynkinLabelRoot{2}{4}
\dynkinLabelRoot{3}{4}
\dynkinLabelRoot{4}{4}
\dynkinLabelRoot{5}{4}
\dynkinLabelRoot{6}{4}
\end{tikzpicture}  \\
 &  \begin{tikzpicture}
\dynkin{E}{oooooo}
\dynkinLabelRoot{1}{12}
\dynkinLabelRoot{2}{12}
\dynkinLabelRoot{3}{12}
\dynkinLabelRoot{4}{12}
\dynkinLabelRoot{5}{12}
\dynkinLabelRoot{6}{12}
\end{tikzpicture}
  \\
 \hline
 \E_6 \ \mathrm{adjoint}  & \begin{tikzpicture}
\dynkin{E}{oooooo}
\dynkinLabelRoot{1}{4}
\dynkinLabelRoot{2}{4}
\dynkinLabelRoot{3}{4}
\dynkinLabelRoot{4}{4}
\dynkinLabelRoot{5}{4}
\dynkinLabelRoot{6}{4}
\end{tikzpicture}  \\
 \hline

\end{tabu}
\]

\section{Low rank groups}\label{lowrank}

In the previous section, we restricted ourselves to $n \geq 6$ for types $\textbf{A}$ through $\textbf{D}$. In this section, we fill in the details for the low rank groups.  

The classification of order profiles for types $\A$ through $\D$, in \S\ref{summary}, hold for many instances of $n \leq 5$, but not all.  Below we provide the details on the exceptions.  To be clear,  since $\B_4$ adjoint does not appear in the table below, it is then the case that the classification of its sections is the same as in $\B_n$ with $n \geq 6$; there are two sections, the orders on the long roots can all be $2$ or all be $4$, and the order on the short root must be $2$.   We also note that for all of the low rank groups for types $\textbf{A}$ through $\textbf{D}$, we will not explicitly write down the torus elements $t_i$ as we did in \S\ref{sections}. One can explicitly compute these by hand rather easily.

\[
 \begin{tabu}{||c  c||} 
 \hline
\mathrm{Type}   & \mathrm{Order \ profiles \ of \ sections}  \\ [0.5ex] 
 \hline\hline
\A_1 \ \mathrm{simply \ connected}    & \begin{tikzpicture}
\dynkin{A}{o};
\dynkinLabelRoot{1}{4}
\end{tikzpicture}  \\ 
 \hline
\A_1 \ \mathrm{adjoint}    & \begin{tikzpicture}
\dynkin{A}{o};
\dynkinLabelRoot{1}{2}
\end{tikzpicture}  \\  \hline
 \D_3 \ \mathrm{simply \ connected}  & \begin{tikzpicture}
\dynkin{D}{ooo}
\dynkinLabelRoot{1}{4j}
\dynkinLabelRoot{2}{4j}
\dynkinLabelRoot{3}{4j}
\end{tikzpicture}  \\
 &   j \geq 1   
   \\
    \hline
 \D_3 \ \mathrm{adjoint}  & \begin{tikzpicture}
\dynkin{D}{ooo}
\dynkinLabelRoot{1}{2j}
\dynkinLabelRoot{2}{2j}
\dynkinLabelRoot{3}{2j}
\end{tikzpicture}    \\
 &   j \geq 1   
   \\
 \hline
  \D_3 \ \mathrm{fundamental \ coweight} \ \omega_1  & \begin{tikzpicture}
\dynkin{D}{ooo}
\dynkinLabelRoot{1}{2j}
\dynkinLabelRoot{2}{2j}
\dynkinLabelRoot{3}{2j}
\end{tikzpicture}    \\
 &   j \geq 1   
   \\
 \hline

 \D_4, \ \mathrm{fundamental \ coweight} \ \omega_4  & \begin{tikzpicture}
\dynkin{D}{oooo}
\dynkinLabelRoot{1}{2}
\dynkinLabelRoot{2}{2}
\dynkinLabelRoot{3}{2}
\dynkinLabelRoot{4}{2}
\end{tikzpicture}   \\
&  \begin{tikzpicture}
\dynkin{D}{oooo}
\dynkinLabelRoot{1}{4}
\dynkinLabelRoot{2}{4}
\dynkinLabelRoot{3}{4}
\dynkinLabelRoot{4}{4}
\end{tikzpicture}    \\
 \hline
 \D_4, \ \mathrm{fundamental \ coweight} \ \omega_3  & \begin{tikzpicture}
\dynkin{D}{oooo}
\dynkinLabelRoot{1}{2}
\dynkinLabelRoot{2}{2}
\dynkinLabelRoot{3}{2}
\dynkinLabelRoot{4}{2}
\end{tikzpicture}   \\
&  \begin{tikzpicture}
\dynkin{D}{oooo}
\dynkinLabelRoot{1}{4}
\dynkinLabelRoot{2}{4}
\dynkinLabelRoot{3}{4}
\dynkinLabelRoot{4}{4}
\end{tikzpicture}    \\
 \hline
  \end{tabu}
 \]

We now include a list of all of the low rank cases whose $T$-conjugacy classes of sections differs from the general versions of those cases, or whose sections merely have a different form than the general case (as presented in \S\ref{sections}), so that their $T$-conjugacy classes are going to look or be different. This list is as follows: simply connected type $\A_1$, adjoint type $\A_1$, adjoint type $\A_3$, the non simply connected, non adjoint isogeny of type $\A_3$, adjoint type $\C_3$, adjoint type $\C_4$, adjoint type $\B_2$, simply connected $\D_3$, adjoint type $\D_3$, adjoint type $\D_4$, the non simply connected, non adjoint isogeny of type $\D_3$, and finally the isogenies of type $\D_4$ given by the fundamental coweights $\omega_{n-1}$ and $\omega_n$.  It is a fairly straightforward exercise to compute the sections and their $T$-conjugacy classes, in these cases.

\end{document}